\documentclass[critical zeros]{article}
\usepackage{amsmath, amssymb, amsthm, delarray}

\usepackage[top =20mm, left=35mm,right=37mm]{geometry}

%%%%%%%%%%	Theoremstyle	%%%%%%%%%%

\newtheorem{thm}{Theorem}[section]
\newtheorem*{theorem*}{Theorem}
\newtheorem{lem}[thm]{Lemma}
\newtheorem{prop}[thm]{Proposition}

\theoremstyle{definition}

\numberwithin{equation}{section}
\theoremstyle{remark}

%%%%%%%%%%%%%%%%%%%%%%%%%%%%%%%%%%%%%%

\begin{document}

\title{\textbf{Zeros of  Dirichlet $L$-functions on the critical line}}

\author{Keiju Sono}

\date{}
\allowdisplaybreaks

\maketitle 
\noindent
\begin{abstract}
In this paper, we estimate the proportion of zeros of Dirichlet $L$-functions on the critical line. Using Feng's mollifier \cite{F} and an asymptotic formula for the mean square of Dirichlet $L$-functions introduced in \cite{CIS3}, we prove that averaged over primitive characters and conductors,  at least 61.07 \% of zeros of Dirichlet $L$-functions are on the critical line, and  at least 60.44 \% of zeros are simple and on the critical line. These results are an improvement on the work of Conrey, Iwaniec and Soundararajan in \cite{CIS2}.

\footnote[0]{2010 {\it Mathematics Subject Classification}.  11M06 }
\footnote[0]{{\it Key Words and Phrases}. Dirichlet $L$-functions, Generalized Riemann Hypothesis, critical zeros, mollifier}
\end{abstract}

\section{Introduction}
Let $q$ be a positive integer and  $\chi$ be an even primitive character modulo $q$. The Dirichlet $L$-function associated to $\chi$ is defined by
\begin{equation}
\label{1}
L(s, \chi)=\sum _{n=1}^{\infty}\frac{\chi(n)}{n^{s}}
\end{equation}
for $\Re (s)>1$ and continued holomorphically to the whole complex plane. Put
\begin{equation}
\label{2}
\Lambda \left( \frac{1}{2}+s, \chi\right)=\left( \frac{q}{\pi} \right)^{s/2}\Gamma \left( \frac{1}{4}+\frac{s}{2} \right)L\left( \frac{1}{2}+s, \chi \right).
\end{equation}
Then $L(s, \chi)$ satisfies the functional equation 
\begin{equation}
\label{3}
\Lambda (s, \chi)=\varepsilon_{\chi}\Lambda (1-s, \overline{\chi}),
\end{equation}
where $|\varepsilon_{\chi}|=1$. For $T>0$, we denote the number of zeros $\rho =\beta +i\gamma$ of the Dirichlet $L$-function $L(s, \chi)$ with $0<\beta <1$, $T\leq \gamma <2T$ by $N(T, \chi)$. It is well known that 
\begin{equation}
\label{4}
N(T, \chi )=\frac{T}{2\pi}\log \frac{2qT}{\pi e} +O(\log qT).
\end{equation}
We denote the number of these zeros with $\beta =1/2$ by $N_{0}(T, \chi)$, and denote by $N_{0}^{'}(T,\chi)$ the number of simple zeros counted by $N_{0}(T, \chi)$. The Generalized Riemann Hypothesis for $L(s, \chi)$ implies that $N_{0}(T, \chi)=N(T, \chi)$ for any $T>0$.    Furthermore, it is widely expected that $N_{0}^{'}(T, \chi)=N(T, \chi)$, i.e., all non-trivial zeros of $L(s, \chi)$ are expected to be simple and on the critical line.  There are several results of types
\begin{equation}
\label{propocrit}
N_{0}(T, \chi)\geq \kappa \; N(T, \chi)
\end{equation}
or
\begin{equation}
\label{proposim}
N_{0}^{'}(T, \chi) \geq \kappa ^{'}N(T, \chi).
\end{equation}
In the case of the Riemann zeta-function, Selberg \cite{S} first proved that (\ref{propocrit}) holds for some small constant $\kappa >0$. This means that positive proportion of non-trivial zeros of $\zeta (s)$ are on the critical line. Levinson \cite{L} showed that (\ref{proposim}) holds with $\kappa ^{'}=0.3474$.  Conrey \cite{C} improved on Levinson's result and proved that (\ref{propocrit}) holds with $\kappa =0.4077$ and (\ref{proposim}) holds with $\kappa ^{'}=0.401$. In the case of Dirichlet $L$-functions, Wu \cite{W2} generalized Conrey's result and proved that (\ref{propocrit}) holds with $\kappa =0.4172$ and (\ref{proposim}) holds with $\kappa ^{'}=0.4074$ uniformly in $q$ with $\log q =o(\log T)$ for sufficiently large $T$.

In the case of the estimation of the proportion of critical zeros of Dirichlet $L$-functions,  one can obtain some better results by averaging over characters and conductors. Let $\Psi (x)$ be a  non-negative 
 smooth function compactly  supported in $[1,2]$. Put 
 \[
 \mathfrak{N}(T, Q)=\sum _{q}\frac{\Psi (q/Q)}{\varphi (q)}\sum _{\chi (\mathrm{mod}\; q)}^{\quad \quad *}N(T, \chi)
 \]
 for $Q, T \geq 3$. Here, $\varphi$ denotes Euler's totient function and the subscript $*$ means that the summation is restricted to all primitive characters. Let $\mathfrak{N}_{0}(T, Q)$ (resp. $\mathfrak{N}_{0}^{'}(T, Q)$) denote the same sum, but with $N(T, \chi)$  replaced by $N_{0}(T, \chi)$ (resp. $N_{0}^{'}(T, \chi)$). Conrey, Iwaniec and Soundararajan \cite{CIS2} proved that for $Q$ and $T$ with $(\log Q)^{6} \leq T \leq (\log Q)^{A}$,
 \begin{equation}
 \label{CIS}
 \mathfrak{N}_{0}^{'}(T, Q) \geq \frac{14}{25}\; \mathfrak{N}(T,Q)
 \end{equation}
 holds, where $A\geq 6$ is any fixed constant, and $Q$ is sufficiently large in terms of $A$. This implies that at least 56\% of zeros of a family of Dirichlet $L$-functions  in the domain $(\log Q)^{6}\leq \Im (s) \leq (\log Q)^{A}$ are simple and on the critical line.  In that paper they used Levinson's mollifier associated to a polynomial $P(x)=x$. They mentioned that by choosing the best possible polynomial,   one can prove that the proportion of simple critical zeros  is at least 58.65\%. In the same situation, Wu \cite{W} proved that more than 80.12\% of zeros of a family of Dirichlet $L$-functions are distinct, and that more than 60.24\% of zeros of a family of Dirichlet $L$-functions are simple. In addition, he improved on these proportions to 83.21\% and 66.43\%  assuming the Generalized Riemann Hypothesis   .
 
 In this paper, to estimate the proportion of critical zeros, we use slightly different definitions. Let $W(x)$ be a smooth function compactly supported in $[1,2]$.  For $Q, T \geq 3$, put 
 \begin{equation}
 \label{5}
 {\cal N}(T, Q)=\sum _{q}W\left( \frac{q}{Q} \right) \sum _{\chi (\mathrm{mod}\; q)}^{\quad \quad \flat}N(T, \chi),
 \end{equation}
where the subscript $\flat$ means that the summation is restricted to all even primitive characters.  We also  define ${\cal N}_{0}(T, Q)$ and ${\cal N}_{0}^{'}(T,Q)$ by replacing $N(T, \chi)$ in (\ref{5}) with $N_{0}(T, \chi)$, $N_{0}^{'}(T, \chi)$, respectively. In \cite{CIS3}, Conrey, Iwaniec and Soundararajan established a formula of mean square of the product of a Dirichlet $L$-function and a Dirichlet polynomial of length $Q^{\theta}$ with $0<\theta <1$ (see Theorem 1.2 below). This formula enables us to compute mollified moments of Dirichlet $L$-functions with mollifier of length $Q^{\theta}$, $0<\theta <1$. In the same paper they mentioned without proof that by using Feng's mollifier instead of Levinson's,  one can improve the results of \cite{CIS2} and obtain 
\begin{equation}
\label{CIScritical}
{\cal N}_{0}(T, Q) \geq \frac{3}{5}\;  {\cal N}(T,Q).
\end{equation}
The value $3/5$ is close to the lower bound for the proportion of critical zeros of the Riemann zeta-function assuming so called the $\theta =1$ conjecture (see \cite{F}). The main purpose of this paper is to establish a complete proof of the estimation of type (\ref{CIScritical}).  We prove the following theorem.
\begin{thm}
For $(\log Q)^{2} \leq T \leq (\log Q)^{A}$, we have
\begin{equation}
\label{mtsimple}
{\cal N}_{0}^{'}(T, Q) \geq 0.6044\; {\cal N}(T, Q),
\end{equation}
and
\begin{equation}
\label{mtcritical}
{\cal N}_{0}(T, Q) \geq 0.6107\;  {\cal N}(T, Q),
\end{equation}
where $A >2$ is an arbitrary fixed constant and $Q$ is sufficiently large in terms of $A$.
\end{thm}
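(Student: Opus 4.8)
The plan is to follow the classical Levinson–Conrey mollification method, adapted to the family of even primitive Dirichlet $L$-functions and upgraded with Feng's mollifier. The starting point is the Levinson-type observation that zeros of $L(s,\chi)$ off the critical line force zeros of a related function built from $\Lambda(s,\chi)$; concretely, one works with $G(s,\chi)=Q(\tfrac{-1}{\log\mathfrak q}\tfrac{d}{ds})\Lambda(s,\chi)$ (for a polynomial $Q$ with $Q(0)=1$, $Q'(1)=0$), whose zeros off the line feed into a counting inequality. After dividing by the archimedean factors one is led to counting, via Littlewood's lemma / Jensen's formula, the zeros of $V(s,\chi)\psi(s,\chi)$ where $V$ is the "Levinson differential operator" applied to $L$ and $\psi$ is the mollifier. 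The upshot — standard since Levinson — is an inequality of the shape
\[
{\cal N}(T,Q)-{\cal N}_0(T,Q)\;\le\;\frac{1}{\log\mathfrak q}\sum_q W\!\left(\frac qQ\right)\sum_{\chi}^{\flat}\log\frac{1}{2\pi i}\oint \cdots\,,
\]
which after Jensen reduces to showing that the mollified second moment
\[
I(Q)=\sum_q W\!\left(\frac qQ\right)\sum_{\chi\,(\mathrm{mod}\,q)}^{\flat}\ \int_{-\infty}^{\infty}\Big|V\!\left(\tfrac12+it,\chi\right)\psi\!\left(\tfrac12+it,\chi\right)\Big|^2\,\phi\!\left(\tfrac{t}{T}\right)dt
\]
is asymptotically $c\cdot {\cal N}(T,Q)$ for an explicit constant $c=c(\theta,Q(\cdot),$ Feng parameters$)$ smaller than $1$; then $\kappa\ge 1-c$, and one optimizes the free parameters to push $c$ below $0.3893$, yielding $61.07\%$.

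Next I would set up the mollifier. Feng's mollifier is a sum $\psi=\psi_1+\psi_2$, where $\psi_1$ is the usual Levinson piece $\sum_{n\le y}\mu(n)\chi(n)n^{-s}P_1(\tfrac{\log y/n}{\log y})$ of length $y=Q^{\theta}$, and $\psi_2$ involves higher divisor-type coefficients, roughly $\sum_{n\le y}\frac{\mu(n)\chi(n)}{n^{s}}\big(\sum_{j}\tfrac{1}{(\log y)^{j}}P_{2,j}(\cdots)(\,\mu * \Lambda^{*k}\,)$-style terms$\big)$, built to mimic $1/\zeta$ more accurately. Because the relevant averaged second moment — the mean square of $L(s,\chi)$ times a Dirichlet polynomial of length $Q^{\theta}$, $0<\theta<1$ — is exactly what Theorem 1.2 of \cite{CIS3} supplies, the plan is to expand $|V\psi|^2$ into a (large but finite) sum of pieces each of the form $\bar L L$ times products of two such Dirichlet polynomials, apply the \cite{CIS3} mean-square formula to each, and collect the main terms. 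This produces the main term as a multiple sum over divisors; the standard device (Conrey, Conrey–Ghosh, Feng) is to convert these arithmetic sums, via contour integrals / the Möbius and divisor correlation sums, into finite-dimensional integrals of the mollifier polynomials $P_1, P_{2,j}$ against explicit kernels depending on $\theta$. The operator $V$ contributes the usual extra polynomial-in-$\log$ weights from Levinson's trick; one carries these through and ends with $I(Q)\sim {\cal N}(T,Q)\cdot c(\theta; P_1,\{P_{2,j}\}, Q)$.

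Then comes the optimization. With $\theta$ fixed as close to $1$ as \cite{CIS3} permits (here the support constraint forces $\theta<1$, and one wants $\theta\to 1^-$ in the bound), $c$ is a quadratic functional in the coefficients of $P_1$ and the $P_{2,j}$ and in the coefficients of $Q$; minimizing it is a finite linear-algebra / calculus-of-variations problem, solved numerically, yielding $c\approx 0.3893$, hence \eqref{mtcritical}. For the simple-zeros statement \eqref{mtsimple} one additionally needs the classical Montgomery–Conrey argument: a lower bound for the fourth-type quantity $\sum |V\psi|$ combined with Cauchy–Schwarz against a mollified \emph{first} moment (or the standard trick bounding $N_0 - N_0'$ by $\sum_{\gamma}(\text{multiplicity}-1)$ controlled by a mollified second moment of $L'/L$-type), which costs a little and drops $61.07\%$ to $60.44\%$; this is essentially the same input with one extra Cauchy–Schwarz.

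The main obstacle — and the bulk of the real work — is the $\psi_2$–$\psi_2$ and $\psi_1$–$\psi_2$ cross terms in the second-moment expansion: after inserting the \cite{CIS3} formula one faces arithmetic sums with two Möbius-twisted divisor functions, and the off-diagonal (shifted-divisor) contributions must be shown to assemble into a clean polynomial integral with \emph{no} loss, uniformly in $q\sim Q$ and for $T$ as small as $(\log Q)^2$. Controlling these correlation sums — tracking the secondary main terms in \cite{CIS3}'s formula through Feng's combinatorial identities, and verifying the error terms genuinely save a power of $\log Q$ in the relevant $\theta<1$ range — is where the delicate estimates live; by comparison the Levinson operator bookkeeping, the conversion to polynomial integrals, and the numerical optimization are routine (if lengthy).
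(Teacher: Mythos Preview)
Your overall strategy---Levinson--Conrey with Feng's mollifier, the second moment supplied by Theorem~1.2, then numerical optimization---is the paper's strategy. Two points need correction.

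Your route to \emph{simple} zeros is not the paper's, and as stated would not work. You propose an additional Cauchy--Schwarz step (``Montgomery--Conrey'', bounding $N_0-N_0'$ via multiplicities and a mollified $L'/L$-type moment) layered on top of the critical-zero bound. No such argument appears here, and there is no mechanism by which it would convert $0.6107$ into exactly $0.6044$. What the paper does is the classical Levinson/Heath-Brown device: when the differential polynomial $Q$ is \emph{linear} (so $G$ involves only $\Lambda$ and $\Lambda'$), the argument-change count leading to Proposition~2.1 already yields a lower bound for $\mathcal N_0'$, not just $\mathcal N_0$. The gap $0.6107\to 0.6044$ is simply the cost of restricting the optimization to linear $Q$: the paper takes $Q(x)=1-0.955682x$ for \eqref{mtsimple} and a degree-five $Q$ for \eqref{mtcritical}, with the identical mollifier computation in both cases. (Incidentally, your side condition $Q'(1)=0$ is not the right one; the actual constraint is $Q'(x)+Q'(1-x)\equiv 0$, reflecting that $P-1$ is odd.)

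You also mislocate the hard work. The shifted-divisor/off-diagonal issues you flag are already absorbed into the error term $\mathcal E_{h,k}$ of Theorem~1.2, which the paper invokes as a black box. After inserting that formula, what remains is a sum over the mollifier support (equation (3.12)), and the real labor is Feng-style combinatorics: Lemmas~3.9--3.11 convert the sums over $j$, now carrying the extra condition $(j,q)=1$, into one-dimensional integrals of the form $\int_0^1 V_{m_1}(\alpha-it,u)V_{m_2}(\beta+it,u)\,u^{m_1+m_2}\,du/(m_1+m_2)!$, via Lemmas~3.4--3.8. The new feature compared to Feng's $\zeta$-function computation is tracking the $q$-dependence (the factors $F(q,1)$, $F(jq,\cdot)$, $F_1(jq,\cdot)$) uniformly over $q\asymp Q$; there is no further divisor-correlation analysis to perform.
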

Instead of the asymptotic large sieve, we use the following result on mean square of Dirichlet $L$-functions with shifts.
\begin{thm}[Conrey, Iwaniec, Soundararajan \cite{CIS3}]
Let $Q>2$ be sufficiently large. For positive integers $h, k$ and $\alpha ,\beta \in \mathbb{C}$, put
\begin{equation}
\label{29}
\Delta _{\alpha ,\beta }(h,k;Q):=\sum _{q}W\left( \frac{q}{Q} \right) \sum _{\chi (\mathrm{mod}\; q)}^{\quad \quad \flat} \Lambda \left(\frac{1}{2}+\alpha ,\chi \right)\Lambda \left(\frac{1}{2}+\beta , \overline{\chi} \right)\chi (h) \overline{\chi}(k).
\end{equation}
Suppose that the shifts $\alpha$ and $\beta$ are $ \ll 1/\log Q$. Then, 
\begin{equation}
\label{31}
\begin{aligned}
& \Delta _{\alpha ,\beta }(h,k;Q) \\
& \quad =\sum _{\underset{(q,hk)=1}{q}} W\left( \frac{q}{Q} \right)   \varphi ^{\flat}(q) \\
&\quad \quad \times\left\{ \left( \frac{q}{\pi}\right)^{\frac{\alpha +\beta }{2}}\Gamma \left(\frac{1}{4}+\frac{\alpha}{2}\right)\Gamma \left(\frac{1}{4}+\frac{\beta}{2}\right) \frac{(h,k)^{1+\alpha +\beta}}{h^{1/2+\beta}{k^{1/2+\alpha}}}\zeta _{q}(1+\alpha +\beta ) \right. \\
& \left. \quad \quad \quad \quad  +\left( \frac{q}{\pi} \right)^{\frac{-\alpha -\beta }{2}}\Gamma \left(\frac{1}{4}-\frac{\alpha}{2}\right)\Gamma \left( \frac{1}{4}-\frac{\beta}{2}\right) \frac{(h,k)^{1-\alpha -\beta}}{h^{1/2-\alpha}{k^{1/2-\beta}}}\zeta _{q}(1-\alpha -\beta ) \right\} +{\cal E}_{h,k},
\end{aligned}
\end{equation}
where $\zeta _{q}(s):=\zeta (s)\prod _{p|q}(1-1/p^{s})$ and the remainder terms ${\cal E}_{h,k}$ satisfy
\begin{equation}
\label{32}
\sum _{h, k \leq Q^{\theta}}\frac{\lambda _{h}\overline{\lambda _{k}}}{\sqrt{hk}}{\cal E}_{h,k}=o(Q^{2})
\end{equation}
for any $0<\theta <1$ and any sequence $(\lambda _{h})$ with $\lambda _{h} \ll h^{\varepsilon}$. 
\end{thm}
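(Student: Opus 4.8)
The plan is to reproduce the argument of Conrey, Iwaniec and Soundararajan underlying Theorem~1.2: one reduces the twisted second moment in $(\ref{29})$ to a character sum, resolves it by orthogonality into a diagonal main term and an off-diagonal remainder, and then controls the remainder by an asymptotic-large-sieve estimate. First I would strip off the archimedean factors, writing $\Lambda(\tfrac12+\alpha,\chi)\Lambda(\tfrac12+\beta,\overline{\chi}) = (q/\pi)^{(\alpha+\beta)/2}\Gamma(\tfrac14+\tfrac\alpha2)\Gamma(\tfrac14+\tfrac\beta2)\,L(\tfrac12+\alpha,\chi)L(\tfrac12+\beta,\overline{\chi})$, and apply an approximate functional equation (equivalently, a Mellin--Barnes contour shift) to the degree-two object $L(\tfrac12+\alpha,\chi)L(\tfrac12+\beta,\overline{\chi})$. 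For a fixed even primitive $\chi$ this object is essentially the $L$-function of $\chi\oplus\overline{\chi}$, whose completed version is invariant under $s\mapsto 1-s$ precisely because $\varepsilon_\chi\varepsilon_{\overline{\chi}}=1$; hence the approximate functional equation produces two dual Dirichlet series, one with summands $\chi(m)\overline{\chi}(n)m^{-1/2-\alpha}n^{-1/2-\beta}$ weighted by a smooth cutoff at $mn\asymp q^{2}$, and a second with $\alpha,\beta$ replaced by $-\alpha,-\beta$ and $\chi$ replaced by $\overline{\chi}$, carrying the extra factor $(q/\pi)^{-(\alpha+\beta)/2}\Gamma(\tfrac14-\tfrac\alpha2)\Gamma(\tfrac14-\tfrac\beta2)$. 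Since the set of even primitive characters modulo $q$ is closed under conjugation, these two pieces are exactly what will generate the two symmetric terms of $(\ref{31})$.

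Next I would insert the twist $\chi(h)\overline{\chi}(k)$ and carry out the sum over even primitive characters modulo $q$ by orthogonality, using M\"obius inversion over the primes dividing $q$ to enforce primitivity and the identity $\tfrac12(\chi(a)+\chi(-a))$ to restrict to even characters. This detects the congruences $mh\equiv\pm nk\pmod q$ with $(mnhk,q)=1$. The exact diagonal $mh=nk$ is the source of the main term: parametrising $h=(h,k)h'$, $k=(h,k)k'$ with $(h',k')=1$ forces $m=k't$, $n=h't$, and the remaining sum over $t$ coprime to $q$ is $\sum_{(t,q)=1}t^{-1-\alpha-\beta}=\zeta_q(1+\alpha+\beta)$; collecting the prefactors gives precisely $(h,k)^{1+\alpha+\beta}h^{-1/2-\beta}k^{-1/2-\alpha}\zeta_q(1+\alpha+\beta)$, while the dual series yields the companion term with $\alpha+\beta$ replaced by $-\alpha-\beta$. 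After summing over $q$ against $W(q/Q)$ and identifying the arithmetic factor with $\varphi^\flat(q)$, one recovers the displayed main term of $(\ref{31})$ up to the error ${\cal E}_{h,k}$, which collects the tails of the approximate functional equation, the off-diagonal contributions from $mh\equiv\pm nk\pmod q$ with $mh\neq nk$, and the lower-order pieces from the coprimality sieving.

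The main obstacle is the bound $(\ref{32})$ on the off-diagonal, which is the asymptotic-large-sieve heart of the matter. For this I would open the congruence $mh\equiv\pm nk\pmod q$ by Poisson summation in the $m,n$ variables (i.e.\ via additive characters together with the smoothness of the approximate-functional-equation cutoff), separating the zero frequency---whose contribution is either negligible or already absorbed in the main term---from the nonzero frequencies. The nonzero-frequency terms carry complete exponential sums of Gauss/Kloosterman type modulo $q$, and after the weighted average $\sum_{h,k\le Q^\theta}\lambda_h\overline{\lambda_k}(hk)^{-1/2}$ one estimates them using the large sieve inequality and the rapid decay of the smooth weights in the dual variables. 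The hypothesis $\theta<1$ enters decisively: the relevant Dirichlet polynomials have length $Q^\theta$, so $Q^{2\theta}$ stays enough below the modulus range $q\asymp Q$ that the off-diagonal is genuinely $o(Q^{2})$ while the diagonal is of exact order $Q^{2}$. The delicate part is the bookkeeping of these error sources, in particular keeping every estimate uniform in the shifts $\alpha,\beta\ll 1/\log Q$ and in $h,k$ up to $Q^\theta$ with $\theta$ arbitrarily close to $1$; the rest is a careful but standard unfolding.
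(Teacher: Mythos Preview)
This theorem is not proved in the present paper: it is quoted verbatim as a result of Conrey, Iwaniec and Soundararajan and cited from \cite{CIS3}, with no argument supplied here. There is therefore no ``paper's own proof'' to compare against; the paper simply invokes the statement as a black box and proceeds to use it in the mollifier computation.

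That said, your outline is a fair high-level summary of the strategy in \cite{CIS3} (and its companion \cite{CIS1}): approximate functional equation for $L(\tfrac12+\alpha,\chi)L(\tfrac12+\beta,\overline{\chi})$, orthogonality over even primitive characters to produce the congruence $mh\equiv\pm nk\pmod q$, extraction of the diagonal $mh=nk$ giving the $\zeta_q(1\pm(\alpha+\beta))$ main terms, and an asymptotic-large-sieve treatment of the off-diagonal to secure $(\ref{32})$ for all $\theta<1$. Be aware, however, that the phrase ``large sieve inequality'' undersells what is needed: the classical large sieve only reaches $\theta<1/2$, and pushing to $\theta<1$ is precisely the content of the asymptotic large sieve of \cite{CIS1}, which requires a much more elaborate analysis (splitting the off-diagonal into shifted-convolution-type sums, exploiting cancellation in the $q$-average, and handling a further ``switching'' between variables). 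Your sketch names the right ingredients but does not engage with this step, which is the entire difficulty; as written it would establish the theorem only for $\theta<1/2$.
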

In the statement of the above theorem, the ranges of $\alpha$ and $\beta$ are restricted to $\alpha , \beta \ll 1/\log Q$.   To evaluate the proportion of critical zeros away from the real axis, we expect that the asymptotic formula (\ref{31}) is still valid for larger values of $|\Im (\alpha)|$, $|\Im (\beta)|$. Since the main terms are multiplied by  gamma factors,  the remainder terms ${\cal E}_{h,k}$ in this case are expected to satisfy
\begin{equation}
\label{32.5}
\sum _{h, k \leq Q^{\theta}}\frac{\lambda _{h}\overline{\lambda _{k}}}{\sqrt{hk}}{\cal E}_{h,k}=o\left(Q^{2} \left|\Gamma \left( \frac{1}{4}+i\frac{\Im (\alpha )}{2} \right)  \Gamma \left( \frac{1}{4}+i\frac{\Im (\beta )}{2} \right) \right|\right).
\end{equation}
Following the argument in \cite{CIS3} with a minor adjustment, we easily see that the formula (\ref{31}) with (\ref{32.5}) is valid for $|\Re (\alpha)|, |\Re (\beta)| \ll 1/\log Q$, $|\Im (\alpha)|, |\Im (\beta )|\ll (\log Q)^{A}$ for any fixed constant $A>1$ (see Appendix at the end of this paper).
  For larger values of $|\Im (\alpha)|$ and $|\Im (\beta )|$, we have the following conditional result.
\begin{thm}
Let $\eta >0$ be a fixed constant. Assume that the asymptotic formula (\ref{31}) holds for $|\Re (\alpha)|, |\Re (\beta)|\ll 1/\log Q$, $|\Im (\alpha)|, |\Im (\beta)|\ll Q^{\eta}$ with ${\cal E}_{h,k}$ satisfying the condition (\ref{32.5}).  Then, for $T\asymp Q^{\eta}$, we have
\begin{equation}
\label{higher zeros}
{\cal N}_{0}^{'}(T,Q) \geq C(\eta)\; {\cal N}(T, Q),
\end{equation}
where $C(\eta)$ is a function whose values are given in Table 1 below.

\begin{table}[htbp] % top, bottom
  \caption{The values of $C(\eta)$}
  \label{tab:1}
  \centering
  \begin{tabular}{|c||c|c|c|c|c|c|c|c|c|} \hline
   $\eta$ & 0+ & 0.25 & 0.5 & 0.75 & 1 & 2 & 3 & 4 & 5                                                            \\ \hline
   $C(\eta)$ & 0.6044   & 0.5261 & 0.4615 & 0.4072  & 0.3601 & 0.2200 & 0.1266 & 0.0583  & 0.0048                     \\ \hline
  \end{tabular} 
\end{table}
\end{thm}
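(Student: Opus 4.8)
The plan is to run the Levinson--Conrey method for detecting simple critical zeros, now for the family averaged over moduli $q\asymp Q$ and even primitive characters, with Feng's mollifier playing the role of Levinson's and with the averaged mean-square formula $(\ref{31})$ --- in the hypothesised range $|\Im(\alpha)|,|\Im(\beta)|\ll Q^{\eta}$ with error bound $(\ref{32.5})$ --- in place of the asymptotic large sieve used in \cite{CIS2}. First I would set up the zero-detection step exactly as in \cite{CIS2}: after normalising $\Lambda(s,\chi)$ by $\varepsilon_{\chi}^{-1/2}$ so that $(\ref{3})$ makes it real on the critical line, apply a Levinson-type differential operator $G\!\left(-\frac{1}{\log(q|t|)}\frac{d}{ds}\right)$ to it, multiply by Feng's mollifier $\psi(s,\chi)$ --- the Levinson piece $\sum_{n\le Q^{\theta_{1}}}\frac{\mu(n)\chi(n)}{n^{s}}P_{1}\!\left(\frac{\log(Q^{\theta_{1}}/n)}{\log Q^{\theta_{1}}}\right)$ plus the higher pieces $j=2,\dots,K$ built from the $\mu$-power-convolution coefficients of \cite{F}, weighted by $(\log Q)^{1-j}$ and by polynomials $P_{j}$ --- and apply Littlewood's lemma on a thin rectangle of height $[T,2T]$ whose left edge sits at $\Re(s)=\tfrac12-\tfrac{R}{\log(qT)}$ for a parameter $R$. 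As in \cite{CIS2} (with the refinement that also bounds the non-simple zeros on the line) this bounds ${\cal N}(T,Q)-{\cal N}_{0}'(T,Q)$, so $(\ref{higher zeros})$ reduces to an asymptotic evaluation, with the correct leading constant, of the mollified second moment
\[
M(Q)=\sum_{q}W\!\left(\frac{q}{Q}\right)\sum_{\chi\,(\mathrm{mod}\,q)}^{\quad\quad\flat}\int_{T}^{2T}\left|G\!\left(-\frac{1}{\log(q|t|)}\frac{d}{ds}\right)\Lambda\!\left(\tfrac12+it,\chi\right)\cdot\psi\!\left(\tfrac12+it,\chi\right)\right|^{2}dt .
\]

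Next I would open the square in $M(Q)$, turning it into a bilinear form $\sum_{h,k\le Q^{\theta}}\frac{\lambda_{h}\overline{\lambda_{k}}}{\sqrt{hk}}\,I_{h,k}$ in the mollifier coefficients, where $|\Lambda(\tfrac12+it,\chi)|^{2}$ and the Levinson operator are encoded by writing the relevant diagonal as $\Lambda(\tfrac12+\alpha,\chi)\Lambda(\tfrac12+\beta,\overline{\chi})$ with $\alpha=\tfrac{r}{\log(q|t|)}+it$, $\beta=\overline{\alpha}$, differentiating in the auxiliary parameter $r$ to generate the $\Lambda'/\Lambda$ terms coming from $G$, and then integrating over $t\in[T,2T]$. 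Since $|\Im(\alpha)|,|\Im(\beta)|\asymp T\asymp Q^{\eta}$, this falls exactly in the hypothesised regime, and the factor $\bigl|\Gamma(\tfrac14+i\tfrac{\Im\alpha}{2})\,\Gamma(\tfrac14+i\tfrac{\Im\beta}{2})\bigr|$ on the right of $(\ref{32.5})$ is precisely what renders ${\cal E}_{h,k}$ negligible: $\Lambda(\tfrac12+it,\chi)$ carries the factor $\Gamma(\tfrac14+it/2)$, which decays like $e^{-\pi|t|/4}$, so dividing through by $|\Gamma(\tfrac14+it/2)|^{2}$ leaves an $L$-mean-value whose main term is of order ${\cal N}(T,Q)$ and whose total error is $o({\cal N}(T,Q))$. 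Inserting $(\ref{31})$ into $I_{h,k}$, the main term splits into the two summands there: the factor $(h,k)^{1\pm(\alpha+\beta)}/(h^{1/2+\beta}k^{1/2+\alpha})$ is handled by multiplicativity, $\zeta_{q}(1\pm(\alpha+\beta))$ by a short contour shift past its pole, and the sum over $q$ (with weight $W(q/Q)\varphi^{\flat}(q)$, coprimality $(q,hk)=1$, and the $p\mid q$ factors of $\zeta_{q}$) by Mellin inversion, yielding smooth kernels in $\log h/\log Q^{\theta}$ and $\log k/\log Q^{\theta}$.

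It then remains to evaluate the resulting double sums over $h,k$ and assemble them into a variational problem, and this is where $\eta$ enters quantitatively. The analytic conductor of $L(\tfrac12+it,\chi)$ in the range considered is $\asymp q(|t|+3)\asymp Q^{1+\eta}$, so every logarithm governing the main-term arithmetic is $\sim(1+\eta)\log Q$, and a mollifier of absolute length $Q^{\theta}$ behaves like one of relative length $\theta/(1+\eta)$ in the classical $t$-bounded situation. Running the computation of \cite{F} with this rescaled parameter, the leading constant $c(\eta)$ of $M(Q)/{\cal N}(T,Q)$ emerges as the value at effective parameter $\theta/(1+\eta)$ of Feng's explicit quadratic-plus-linear functional of $G,P_{1},\dots,P_{K}$; minimising it via the associated Euler--Lagrange system and optimising numerically over the polynomial degrees, the number of pieces $K$, and the rectangle parameter $R$, while letting $\theta\to1^{-}$ as $(\ref{32})$ and its hypothesised analogue $(\ref{32.5})$ permit, then gives $C(\eta)$ through ${\cal N}_{0}'(T,Q)\ge\bigl(1-\tfrac1R\log c(\eta)\bigr)\,{\cal N}(T,Q)$ at the optimum. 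Letting $\eta\to0$ recovers $C(0+)=0.6044$, matching $(\ref{mtsimple})$, and the monotone decrease of the entries in Table 1 is the reflection of the shrinking effective parameter $\theta/(1+\eta)\to 1/(1+\eta)$.

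The main obstacle is the control in the second step: one must verify, uniformly as $|\Im(\alpha)|,|\Im(\beta)|$ range out to $\asymp Q^{\eta}$ and after the integration over $t\in[T,2T]$, that everything beyond the two displayed main terms of $(\ref{31})$ --- the residue picked up when shifting past the pole of $\zeta_{q}$, any secondary terms concealed in ${\cal E}_{h,k}$, and the tails of the $\mu$-twisted divisor correlations produced by Feng's mollifier --- is dominated by the gamma-factor normalisation in $(\ref{32.5})$; this is precisely the point where the hypothesis, rather than the unconditional $(\ref{32})$, is needed. The remaining difficulty is purely computational: Feng's mollifier generates many cross terms $\langle\psi_{i},\psi_{j}\rangle$ involving $j$-fold divisor sums twisted by characters, and marshalling these into the variational functional --- and then solving it --- is lengthy, but, granting the hypothesis, it follows \cite{F} and \cite{CIS2} step by step and introduces no new phenomena.
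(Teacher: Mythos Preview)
Your overall strategy coincides with the paper's: Levinson--Conrey zero detection (Proposition~2.1), Feng's mollifier (Section~3), evaluation of the mollified second moment via the hypothesised formula~(\ref{31}) with the gamma-weighted error bound~(\ref{32.5}), and numerical optimisation over $R$, the Levinson polynomial, and the $P_{i}$. Your remark that the factor $|\Gamma(\tfrac14+i\tfrac{\Im\alpha}{2})\Gamma(\tfrac14+i\tfrac{\Im\beta}{2})|$ in~(\ref{32.5}) exactly matches the decay of $|\Lambda(\tfrac12+it,\chi)|^{2}$ is precisely why the hypothesis suffices.

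There is, however, a genuine inaccuracy in your third paragraph. You assert that the leading constant ``emerges as the value at effective parameter $\theta/(1+\eta)$ of Feng's explicit quadratic-plus-linear functional''. This is not what happens. The conductor $qT\asymp Q^{1+\eta}$ is a product of a non-archimedean piece $q$ and an archimedean piece $T$, and these enter the two main terms of~(\ref{31}) asymmetrically under the swap $(\alpha,\beta)\mapsto(-\alpha,-\beta)$: the factor $(q/\pi)^{\pm(\alpha+\beta)/2}$ contributes powers of $\log q$ under differentiation, while the ratio $\Gamma(\tfrac14-\tfrac{\alpha}{2})/\Gamma(\tfrac{\sigma-it}{2})$ contributes powers of $\log T$ together with an extra factor $e^{R\log T/\log QT}=e^{\eta R/(1+\eta)}$ (this is Lemma~4.1). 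Consequently, the resulting functional (Proposition~4.2, equation~(\ref{74})) has the Levinson polynomial acting as $P\bigl(\tfrac{1+\eta}{2}+\partial_{a}\bigr)P\bigl(\tfrac{1+\eta}{2}+\partial_{b}\bigr)$ on the first main term but as $P\bigl(\tfrac{1-\eta}{2}+\partial_{a}\bigr)P\bigl(\tfrac{1-\eta}{2}+\partial_{b}\bigr)$ on the swapped term, the latter additionally weighted by $e^{2\eta R/(1+\eta)}$, with the substitution at $a=b=-R/(1+\eta)$. In Feng's single-conductor setting for $\zeta(s)$ the two operators coincide for every $\theta$, so no rescaling $\theta\mapsto\theta/(1+\eta)$ reproduces this asymmetry; your normalisation of the Levinson operator by $\log(q|t|)$ instead of $\log q$ changes the form of $P$ but does not remove the two distinct shift points. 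To obtain the numbers in Table~1 you must work with~(\ref{74}) itself, not with Feng's functional at shorter length.

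A minor procedural difference: the paper does not solve an Euler--Lagrange system. It fixes $I=3$, takes $P(x)=1+r\bigl(x-\tfrac{1+\eta}{2}\bigr)$ linear, $P_{1}$ cubic, $P_{2}$ quadratic, $P_{3}$ linear, and prescribes the six free coefficients together with $R$ as explicit degree-five polynomials in $\eta$ chosen near the numerical optimum; $C(\eta)$ is then the value of the displayed expression at the end of Section~4 evaluated at these choices.
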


The function $C(\eta)$ is decreasing, and is negative when $\eta >5.074$. We will describe the concrete construction of $C(\eta)$ in the proof of Theorem 1.3.

%%%%%%%%%%%%%%%%%%%%%%%%%%%%%%%%%%%%%%%%%%%%%%%%%%%%%%%%%%%%%%%%%%%%%%%%%%%%%%%%%%%%%%%%%%%%%%%%%%%%%%%%%%%%%%%%%%%%%%%%%%%%%%%%%%%%%%%%%%%%%%%%%%%%%%%%
%%%%%%%%%%%%%%%%%%%%%%%%%%%%%%%%%%%%%%%%%%%%%%%%%%%%%%%%%%%%%%%%%%%%%%%%%%%%%%%%%%%%%%%%%%%%%%%%%%%%%%%%%%%%%%%%%%%%%%%%%%%%%%%%%%%%%%%%%%%%%%%%%%%%%%%%

\section{Evaluating the proportion of critical zeros}
In this section, we  see the fundamental method of Levinson and Conrey to detect the critical zeros of Dirichlet $L$-functions. Though the basic argument is almost the same as that of \cite{CIS2}, we recall the method, since the way of taking average over conductors in \cite{CIS2} and ours are slightly different. Let $\chi$ be an even primitive character modulo $q$. Put
\begin{equation}
\label{6}
H_{q}(s)=\left( \frac{q}{\pi} \right)^{s/2-1/4}\Gamma \left( \frac{s}{2} \right).
\end{equation}
Then by (\ref{2}) and (\ref{3}), we have 
\begin{equation}
\label{7}
\eta _{\chi}H_{q}(s)L(s, \chi)=\overline{\eta _{\chi}}H_{q}(1-s)L(1-s, \overline{\chi}),
\end{equation}
where $\varepsilon _{\chi}=\overline{\eta _{\chi}}/\eta _{\chi}$. This $\eta _{\chi}$ satisfies $\eta _{\overline{\chi}}=\overline{\eta _{\chi}}$. Next, define a function $G(s,\chi)$ by 
\begin{equation}
\label{8}
H_{q}(s)G(s, \chi)=\eta_{\chi}\Lambda (s,\chi)+\eta _{\chi}\sum _{k>0, \mathrm{odd}}\frac{g_{k}}{(\log q)^{k}}\Lambda ^{(k)}(s,\chi),
\end{equation}
where $(g_{k})$ is a real sequence supported in the set of positive odd integers and $g_{k}=0$ for all sufficiently large $k$. Since
\begin{equation}
\label{9}
\Lambda ^{(k)}(s,\chi)=(-1)^{k}\varepsilon_{\chi}\Lambda ^{(k)}(1-s, \overline{\chi}),
\end{equation}
by (\ref{8}) we have
\begin{equation}
\label{10}
H_{q}(1-s)G(1-s, \overline{\chi})=\eta _{\chi}\Lambda (s, \chi)-\eta _{\chi}\sum _{k>0, \mathrm{odd}}\frac{g_{k}}{(\log q)^{k}}\Lambda ^{(k)}(s,\chi).
\end{equation}
By (\ref{8}) and (\ref{10}), we have
\begin{equation}
\label{11}
2\eta _{\chi}\Lambda (s,\chi)=H_{q}(s)G(s,\chi)+H_{q}(1-s)G(1-s, \overline{\chi}).
\end{equation}
By (\ref{11}), it follows that $\rho =1/2+i\gamma$ is a zero of $L(s,\chi)$ if and only if  either $\rho$ is a zero of $G(s,\chi)$ or  $G(1/2+i\gamma , \chi)\neq 0$ and $\Re (H_{q}(1/2+i\gamma)G(1/2+i\gamma ,\chi))=0$. Hence in order to obtain a lower bound for the proportion of critical zeros, it suffices to count the number of $\gamma \in [T, 2T]$ satisfying 
\begin{equation}
\label{12}
H_{q}(1/2+i\gamma)G(1/2+i\gamma ,\chi) \equiv \frac{\pi}{2} \quad  (\mathrm{mod}\; \pi ).
\end{equation}
Consequently, 
\begin{equation}
\label{12.5}
N_{0}(T, \chi) \geq \frac{1}{\pi}\Delta _{{\cal C}}\arg \left( H_{q}(s)G(s, \chi) \right)-1,
\end{equation}
where ${\cal C}=\{s=1/2+it \;  |\; T \leq t \leq 2T \}$. By Stirling's approximation of the Gamma function and (\ref{4}), we have
\[
\frac{1}{\pi}\Delta _{{\cal C}}\arg H_{q}(s)=N(T, \chi) +O(\log qT).
\]
Hence 
\begin{equation}
\label{13}
N_{0}(T, \chi) \geq N(T, \chi)+\frac{1}{\pi}\Delta _{{\cal C}}\arg G(s, \chi)+O(\log qT).
\end{equation}
Let ${\cal R}$ be a rectangle with vertices $1/2+iT$, $1/2+2iT$, $A_{0}+iT$ and $A_{0}+2iT$ oriented clockwise, where   $A_{0}>3$ is a sufficiently large fixed constant. Then 
\begin{equation}
\label{13.5}
\Delta _{{\cal C}}\arg G(s, \chi)=\Delta _{{\cal R}}\arg G(s, \chi) +O(\log qT).
\end{equation}
Denote the number of zeros of $G(s, \chi)$ in ${\cal R}$ by $N_{R}(G)$. Then
\begin{equation}
\label{13.75}
\Delta _{{\cal R}}\arg G(s, \chi)=-2\pi N_{R}(G).
\end{equation}
Combining (\ref{13}), (\ref{13.5}) and (\ref{13.75}), we have
\begin{equation}
\label{14}
N_{0}(T, \chi) \geq N(T, \chi)-2N_{{\cal R}}(G)+O(\log qT).
\end{equation} 
For  an entire function  $M(s, \chi)$ and  $\sigma <1/2$, put
\begin{equation}
\label{15}
F(s, \chi)=\eta _{\chi}^{-1}M\left( \frac{1}{2}-\sigma +s, \chi\right)G(s, \chi).
\end{equation}
We denote the number of zeros of $F(s, \chi)$ in ${\cal R}$ by $N_{{\cal R}}(F)$. Let $D$ be a clockwise oriented rectangle with vertices $\sigma +iT$, $\sigma +2iT$, $A_{0}+iT$ and $A_{0}+2iT$.  Then by Littlewood's lemma we have
\begin{equation}
\label{16}
\Re \left( \frac{1}{2\pi i}\int _{\partial D}\log F(s, \chi) ds \right) =\sum _{\rho \in D}\mathrm{dist} (\rho) \geq \left( \frac{1}{2}-\sigma \right)N_{{\cal R}}(F),
\end{equation}
where $\rho$ runs over zeros of $F(s, \chi)$ counted with multiplicity and $\textrm{dist}(\rho)$ is the distance between $\rho$ and the left edge of $D$. The contribution of the integral except for the left edge in (\ref{16}) is $O(\log qT )$, provided that $F(s, \chi)$ is of the form
\begin{equation}
\label{F}
F(s, \chi) =\left( 1+\sum _{n=2}^{\infty}\frac{\lambda (n)}{n^{s}}\right) (1+o(1)), \quad \lambda (n) \ll 1.
\end{equation}
By (\ref{16}) we have
\begin{equation}
\label{18}
N_{{\cal R}}(F) \leq \frac{1}{2\pi (1/2-\sigma )}\left( \int _{T}^{2T}\log |F(\sigma +it, \chi)| dt +O(\log qT) \right).
\end{equation}
By (\ref{14}),  (\ref{18}) and $N_{{\cal R}}(G)\leq N_{{\cal R}}(F)$, we have
\begin{equation}
\label{19}
N_{0}(T, \chi) \geq N(T, \chi)-\frac{1}{\pi (1/2-\sigma )}\int _{T}^{2T}\log |F(\sigma +it, \chi)| dt +O\left(\frac{\log qT}{1/2-\sigma}\right).
\end{equation}
Multiplying both sides of (\ref{19}) by $W(q/Q)$ and taking the sum over even primitive characters and conductors, we have
\begin{equation}
\label{20}
\begin{aligned} {\cal N}_{0}(T, Q) \geq & {\cal N}(T,Q)  -\frac{T}{\pi (1/2-\sigma)}\sum _{q}W\left( \frac{q}{Q} \right)\sum _{\chi \; (\mathrm{mod}\; q)}^{\quad \quad \flat}\frac{1}{T}\int _{T}^{2T}\log |F(\sigma +it, \chi)| dt \\
& +O\left( \frac{Q^{2}\log QT}{1/2-\sigma}\right).
\end{aligned}
\end{equation}
Denote the number of even primitive characters modulo $q$ by $\varphi ^{\flat}(q)$ and put 
\[
\psi (Q)=\sum _{q}W\left( \frac{q}{Q} \right)\varphi ^{\flat}(q).
\]
Due to the convexity of the logarithmic function, we have 
\begin{equation}
\label{21}
\begin{aligned}
& \frac{1}{\psi (Q)}\sum _{q}W\left( \frac{q}{Q} \right)\sum _{\chi \; (\mathrm{mod}\; q)}^{\quad \quad \flat}\frac{1}{T}\int _{T}^{2T}\log |F(\sigma +it, \chi)| dt \\
& \quad \leq \log \left( \frac{1}{\psi (Q)}\sum _{q}W\left( \frac{q}{Q} \right)\sum _{\chi \; (\mathrm{mod}\; q)}^{\quad \quad \flat}\frac{1}{T}\int _{T}^{2T} |F(\sigma +it, \chi)| dt \right).
\end{aligned}
\end{equation}
By Cauchy's inequality, 
\begin{equation}
\label{22}
\begin{aligned}
& \frac{1}{\psi (Q)}\sum _{q}W\left( \frac{q}{Q} \right)\sum _{\chi \; (\mathrm{mod}\; q)}^{\quad \quad \flat}\frac{1}{T}\int _{T}^{2T} |F(\sigma +it, \chi)| dt \\
& \quad \leq \left( \frac{1}{\psi (Q)}\sum _{q}W\left( \frac{q}{Q} \right)\sum _{\chi \; (\mathrm{mod}\; q)}^{\quad \quad \flat}\frac{1}{T}\int _{T}^{2T} |F(\sigma +it, \chi)|^{2} dt \right)^{\frac{1}{2}}.
\end{aligned}
\end{equation}
Combining (\ref{21}) and (\ref{22}), we have 
\begin{equation}
\label{23}
\begin{aligned}
& \sum _{q}W\left( \frac{q}{Q} \right)\sum _{\chi \; (\mathrm{mod}\; q)}^{\quad \quad \flat}\frac{1}{T}\int _{T}^{2T}\log |F(\sigma +it, \chi)| dt \\
& \quad \leq \frac{\psi (Q)}{2}\log \left(\frac{1}{\psi (Q)}\sum _{q}W\left( \frac{q}{Q} \right)\sum _{\chi \; (\mathrm{mod}\; q)}^{\quad \quad \flat}\frac{1}{T}\int _{T}^{2T} |F(\sigma +it, \chi)|^{2} dt   \right).
\end{aligned}
\end{equation}
Therefore, 
\begin{equation}
\label{24}
\begin{aligned}
{\cal N}_{0}(T, Q) \geq &{\cal N}(T,Q) -\frac{T\psi (Q)}{2\pi (1/2-\sigma)}\log \left( \frac{1}{\psi (Q)}\sum _{q}W\left( \frac{q}{Q} \right)\sum _{\chi \; (\mathrm{mod}\; q)}^{\quad \quad \flat}\frac{1}{T}\int _{T}^{2T} |F(\sigma +it, \chi)|^{2} dt   \right) \\
&  +O\left( \frac{Q^{2}\log QT}{1/2-\sigma} \right).
\end{aligned}
\end{equation}
Combining (\ref{24}) with 
\begin{equation}
\label{25}
\psi (Q)\sim \frac{2\pi}{T\log QT}\; {\cal N}(T,Q),
\end{equation}
we have 
\begin{equation}
\label{26}
\begin{aligned}
& {\cal N}_{0}(T,Q) \\
& \quad \geq \left\{ 1-\frac{1}{(1/2-\sigma )\log QT}\log \left(\frac{1}{\psi (Q)}\sum _{q}W\left( \frac{q}{Q} \right)\sum _{\chi \; (\mathrm{mod}\; q)}^{\quad \quad \flat}\frac{1}{T}\int _{T}^{2T} |F(\sigma +it, \chi)|^{2} dt   \right) \right. \\
& \left. \quad +O\left( \frac{1}{(1/2-\sigma)T} \right) \right\}(1+o(1)){\cal N}(Q,T).
\end{aligned}
\end{equation}
For a constant $R>0$, put $\sigma =1/2-R/\log QT$. The error term above is $O(T^{-1}\log QT)$, which is negligible if $T \gg (\log Q)^{2}$. Hence we obtain the following inequality. 
\begin{prop}
Assume that $F(s, \chi)$ is of the form (\ref{F}). Then, for a positive constant $R$,  we have
\begin{equation}
\label{27}
\begin{aligned}
& {\cal N}_{0}(T,Q) \\
& \quad \geq \left\{ 1-\frac{1}{R}\log \left( \frac{1}{\psi (Q)}\sum _{q}W\left( \frac{q}{Q} \right)\sum _{\chi \; (\mathrm{mod}\; q)}^{\quad \quad \flat}\frac{1}{T}\int _{T}^{2T} |F(\sigma +it, \chi)|^{2} dt    \right)   \right\} \\
& \quad \quad  \times (1+o(1)){\cal N}(T,Q)
\end{aligned}
\end{equation}
for $T \gg (\log Q)^{2}$, where  $\sigma =1/2-R/\log QT$.
\end{prop}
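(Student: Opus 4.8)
The plan is to read off \eqref{27} as the endpoint of the chain of inequalities \eqref{6}--\eqref{26} assembled above, once the free abscissa $\sigma$ is fixed appropriately. Thus the bulk of the argument is already in place; what remains is to (i) record that the hypothesis on $F(s,\chi)$ is exactly what the intermediate steps require, (ii) carry the averaging through, and (iii) specialize $\sigma=1/2-R/\log QT$ and check that the resulting error is negligible under $T\gg(\log Q)^{2}$.

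First I would note that the hypothesis that $F(s,\chi)$ has the form \eqref{F} is used precisely to guarantee that in Littlewood's lemma the contributions of the top, bottom and right edges of the rectangle $D$ are $O(\log qT)$: on those edges $\Re(s)$ stays in a fixed range with $\sigma$ near $1/2$, so $F(s,\chi)=\bigl(1+\sum_{n\ge2}\lambda(n)n^{-s}\bigr)(1+o(1))$ with $\lambda(n)\ll1$ is bounded above and below, hence $\log F(s,\chi)$ is holomorphic and of controlled size there. This yields \eqref{18} and therefore \eqref{19} for each even primitive $\chi\pmod q$ (using also $N_{\cal R}(G)\le N_{\cal R}(F)$, since $F$ is $\eta_\chi^{-1}M(1/2-\sigma+s,\chi)$ times $G$ with $M(\cdot,\chi)$ entire). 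Multiplying \eqref{19} by $W(q/Q)$ and summing over $q$ and over even primitive characters gives \eqref{20}; bounding the average of $\log|F(\sigma+it,\chi)|$ by Jensen's inequality (convexity of the logarithm, as in \eqref{21}) and then by Cauchy--Schwarz (as in \eqref{22}) produces \eqref{24}. Inserting the zero-count asymptotic \eqref{25} --- which follows from \eqref{4} summed against the weight $W(q/Q)$ over even primitive characters, using that $q\asymp Q$ forces $\log(2qT/\pi e)=(1+o(1))\log QT$ and that the accumulated error $O(Q^{2}\log QT)$ is $o(1)$ relative to the main term, which has order $TQ^{2}\log QT$ --- then yields \eqref{26}.

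Finally I would set $\sigma=1/2-R/\log QT$, so that $1/2-\sigma=R/\log QT$ and $\bigl((1/2-\sigma)\log QT\bigr)^{-1}=R^{-1}$. The error term in \eqref{26} becomes $O\bigl((1/2-\sigma)^{-1}T^{-1}\bigr)=O\bigl((\log QT)/(RT)\bigr)$, and since $T\gg(\log Q)^{2}$ gives $\log QT\asymp\log Q=o(T)$, this is $o(1)$; absorbing it into the factor $1+o(1)$ delivers \eqref{27}. I do not expect a genuine obstacle here: the only points needing care are the qualitative input to Littlewood's lemma just mentioned (well-definedness and polynomial size of $\log F$ on three edges of $D$, which is where \eqref{F} is invoked) and the bookkeeping of the two error terms --- the additive $O(Q^{2}\log QT)$ from the summation and the multiplicative $O(1/T)$ from \eqref{25} --- through the division by $1/2-\sigma\asymp1/\log QT$. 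The substantive work, namely the asymptotic evaluation of the mollified second moment $\frac{1}{\psi(Q)}\sum_{q}W(q/Q)\sum_{\chi}^{\flat}\frac{1}{T}\int_{T}^{2T}|F(\sigma+it,\chi)|^{2}\,dt$ on the right of \eqref{27} via Theorem 1.2, and the subsequent optimization over the mollifier and over $R$, lies outside this proposition.
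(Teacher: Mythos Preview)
Your proposal is correct and follows essentially the same route as the paper: you trace the chain \eqref{19}$\to$\eqref{20}$\to$\eqref{24}$\to$\eqref{26} via convexity and Cauchy--Schwarz exactly as written, then specialize $\sigma=1/2-R/\log QT$ and verify the $O((\log QT)/T)$ error is $o(1)$ for $T\gg(\log Q)^{2}$. One tiny remark: your aside ``$\log QT\asymp\log Q$'' is only valid in the regime $T\le(\log Q)^{A}$ used later in the paper, but your conclusion $\log QT=o(T)$ holds for all $T\gg(\log Q)^{2}$ anyway, since $(\log Q)/T\le 1/\log Q$ and $(\log T)/T\to 0$.
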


Before proceeding to the computation of mollified moment, let us see the condition that (\ref{F}) holds. Put $P(x)=\sum _{l}g_{l}x^{l}$ ($g_{0}:=1$).  Let $A_{0}>1$ be a sufficiently large fixed constant. Assume that $M(s, \chi)$ satisfies $M(A_{0}+it, \chi)\sim 1$ as $A_{0}\to \infty$. Then
\[
F(A_{0}+it, \chi)=\eta _{\chi}^{-1}M\left( \frac{1}{2}-\sigma +A_{0} +it, \chi \right)G(A_{0}+it, \chi)\sim \eta _{\chi}^{-1}G(A_{0}+it, \chi).
\]
Hence $G(s, \chi)$ must satisfy $\eta _{\chi}^{-1}G(A_{0}+it, \chi)\sim 1$. By (\ref{8}) and Stirling's approximation of logarithmic derivatives 
\[
\frac{H_{q}^{(l)}(s)}{H_{q}(s)} \sim \left( \frac{1}{2}\log \frac{qT}{\pi} \right)^{l},
\]
we have
\begin{align*}
\eta _{\chi}^{-1}G(s, \chi)\sim &L(s, \chi) +\sum _{k: \mathrm{odd}}\frac{g_{k}}{(\log q)^{k}}\sum _{l=0}^{k}\binom{k}{l}\left( \frac{1}{2}\log \frac{qT}{\pi} \right)^{l}L^{(k-l)}(s, \chi) \\
=&P \left( \frac{\log \frac{qT}{\pi}}{2\log q}+\frac{d}{ds} \right)L(s, \chi).
\end{align*}
Hence the polynomial $P(x)$ is required to satisfy $P((\log qT/\pi )/(2\log q))\sim 1$. If $T\asymp Q^{\eta}$, $q \asymp Q$, this condition is equivalent to
\begin{equation}
\label{condP}
P\left( \frac{1+\eta}{2} \right)=1.
\end{equation}
In case of $(\log Q)^{2} \leq T \leq (\log Q)^{A}$, the condition 
\begin{equation}
\label{condP'}
P\left( \frac{1}{2} \right)=1
\end{equation}
is required. In this case we use the polynomial $Q(x):=P(1/2-x)$. Hence $Q(x)$ is required to satisfy
\begin{equation}
\label{condQ}
Q(0)=1.
\end{equation}
By the definition of the polynomial $P(x)$, $Q(x)$ must satisfy another condition that 
\begin{equation}
\label{condQ'}
Q^{'}(x)+Q^{'}(1-x)\equiv 0.
\end{equation}

%%%%%%%%%%%%%%%%%%%%%%%%%%%%%%%%%%%%%%%%%%%%%%%%%%%%%%%%%%%%%%%%%%%%%%%%%%%%%%%%%%%%%%%%%%%%%%%%%%%%%%%%%%%%%%%%%%%%%%%%
%%%%%%%%%%%%%%%%%%%%%%%%%%%%%%%%%%%%%%%%%%%%%%%%%%%%%%%%%%%%%%%%%%%%%%%%%%%%%%%%%%%%%%%%%%%%%%%%%%%%%%%%%%%%%%%%%%%%%%%%

\section{The adaption of Feng's mollifier}
We use a Feng-type mollifier defined by 
\begin{equation}
\label{14.5}
\begin{aligned}
&M(s, \chi) \\
&=\sum _{m\leq X}\frac{\mu (m)\chi (m)}{m^{s}}P_{1}\left( \frac{\log X/m}{\log X} \right) \\
&\quad +\sum _{m\leq X}\frac{\mu (m)\chi (m)}{m^{s}} \left\{ \sum _{p_{1}p_{2}|m}\frac{\log p_{1}\log p_{2}}{\log ^{2}X} P_{2}\left( \frac{\log X/m}{\log X} \right)\right. \\
& \quad  \left. +\sum _{p_{1}p_{2}p_{3}|m}\frac{\log p_{1}\log p_{2}\log p_{3}}{\log ^{3}X} P_{3}\left( \frac{\log X/m}{\log X} \right) +\cdots +\sum _{p_{1}\cdots p_{I}|m}\frac{\log p_{1}\cdots \log p_{I}}{\log ^{I}X}P_{I}\left( \frac{\log X/m}{\log X} \right) \right\}
\end{aligned}
\end{equation}
for $I \geq 3$, $X=Q^{\theta}$, $0<\theta <1$. Here, $P_{i}$ are real polynomials with $P_{1}(0)=0$, $P_{1}(1)=1$, $P_{j}(0)=0$ $(j=2, \ldots ,I)$. We write (\ref{14.5}) by 
\[
M(s, \chi)=\sum _{m \leq X}\frac{\chi (m)c(m)}{m^{s}}.
\]
Then, since $F(\sigma +it, \chi)$ is given by
\[
F(\sigma +it, \chi)=\eta _{\chi}^{-1}M\left( \frac{1}{2}+it, \chi \right)G(\sigma +it, \chi)
\]
with
\[
M\left( \frac{1}{2}+it, \chi \right)=\sum _{m \leq X}\frac{\chi (m)c(m)}{m^{1/2+it}},
\]
\[
G(\sigma +it, \chi)=\frac{\eta _{\chi}}{H_{q}(\sigma +it)}\sum _{l\geq 0} \frac{g_{l}}{(\log q)^{l}}\Lambda ^{(l)}(\sigma +it, \chi)\quad  (g_{0}:=1),
\]
we have
\begin{equation}
\label{28}
\begin{aligned}
\int _{T}^{2T}|F(\sigma +it, \chi)|^{2} dt =&\sum _{h, k \leq X}\sum _{l, m}\frac{g_{l}g_{m}}{(\log q)^{l+m}} \frac{\chi (h)\overline{\chi}(k)c(h)c(k)}{\sqrt{hk}} \\
& \quad \times \int _{T}^{2T}\frac{1}{|H_{q}(\sigma +it )|^{2}}\Lambda ^{(l)}(\sigma +it, \chi)\Lambda ^{(m)}(\sigma -it, \overline{\chi})\left( \frac{k}{h} \right)^{it} dt.
\end{aligned}
\end{equation}
To compute (\ref{28}), put 
\begin{equation}
\label{deltatilde}
\tilde{\Delta} _{\alpha ,\beta }(h,k;Q):=\sum _{q}\frac{W(q/Q)}{|H_{q}(\sigma +it)|^{2}}\sum _{\chi (\mathrm{mod}\; q)}^{\quad \quad \flat} \Lambda (1/2+\alpha ,\chi )\Lambda (1/2+\beta ,\overline{\chi})\chi (h) \overline{\chi}(k).
\end{equation}
Then
\begin{equation}
\label{30}
\begin{aligned}
& \sum _{q}W\left( \frac{q}{Q} \right)\frac{1}{T}\sum _{\chi (\mathrm{mod}\; q)}^{\quad \quad \flat}\int _{T}^{2T}|F(\sigma +it, \chi)|^{2} dt \\
& \quad =\sum _{h,k\leq X}\sum _{l, m}\frac{g_{l}g_{m}}{(\log q)^{l+m}}\frac{c(h)c(k)}{\sqrt{hk}} \frac{1}{T}\int _{T}^{2T}\frac{\partial ^{l+m}}{\partial \alpha ^{l}\partial \beta ^{m}}\tilde{\Delta}_{\alpha ,\beta}(h,k; Q) | _{\alpha =\sigma -1/2+it,\; \beta =\sigma -1/2-it}\left( \frac{k}{h} \right)^{it} dt.
\end{aligned}
\end{equation}
Under the assumption of (\ref{31}), the sum over $h, k$ is
\begin{equation}
\label{32.75}
\begin{aligned}
& \sum _{h, k \leq X}\frac{c(h)c(k)}{\sqrt{hk}}\tilde{\Delta}_{\alpha ,\beta}(h,k;Q)\left( \frac{k}{h} \right)^{it} \\
& \quad = \sum _{q}W\left( \frac{q}{Q} \right)\varphi ^{*}(q)|H_{q}(\sigma +it )|^{-2} \\
& \quad \times \left\{\left( \frac{q}{\pi} \right)^{\frac{\alpha +\beta}{2}}\Gamma \left( \frac{1}{4}+\frac{\alpha}{2} \right) \Gamma \left( \frac{1}{4}+\frac{\beta}{2} \right) \zeta _{q}(1+\alpha +\beta)\sum _{\underset{(hk,q)=1}{h, k\leq X}}\frac{c(h)c(k)(h,k)^{1+\alpha +\beta}}{h^{1+\beta +it}k^{1+\alpha -it}}  \right. \\
&\left.  \quad \quad \quad  + \left( \frac{q}{\pi} \right)^{-\frac{\alpha +\beta}{2}}\Gamma \left( \frac{1}{4}-\frac{\alpha}{2} \right) \Gamma \left( \frac{1}{4}-\frac{\beta}{2} \right) \zeta _{q}(1-\alpha -\beta)\sum _{\underset{(hk,q)=1}{h, k\leq X}}\frac{c(h)c(k)(h,k)^{1-\alpha -\beta}}{h^{1-\alpha +it}k^{1-\beta -it}} \right\}
\end{aligned}
\end{equation}
plus  negligible remainder terms. Put
\[
F(j, w)=\prod _{p|j}\left(1-\frac{1}{p^{w}} \right).
\]
Then
\[
(h,k)^{1+\alpha +\beta}=\sum _{j|(h,k)}j^{1+\alpha +\beta}F(j, 1+\alpha +\beta).
\]
Hence
\begin{equation}
\label{32.875}
\begin{aligned}
\sum _{\underset{(hk,q)=1}{h, k \leq X}}\frac{c(h)c(k)(h,k)^{1+\alpha +\beta}}{h^{1+\beta +it}k^{1+\alpha -it}} =&\sum _{\underset{(hk,q)=1}{h, k \leq X}}\frac{c(h)c(k)}{h^{1+\beta +it}k^{1+\alpha -it}}\sum _{j|(h,k)}j^{1+\alpha+\beta}F(j, 1+\alpha +\beta) \\
=&\sum _{\underset{(j,q)=1}{j \leq X}}\frac{F(j, 1+\alpha +\beta)}{j} \sum _{\underset{(hk,q)=1}{h,k \leq \frac{X}{j}}}\frac{c(jh)c(jk)}{k^{1+\beta +it}h^{1+\alpha -it}}.
\end{aligned}
\end{equation}
By the definition of $c(n)$, it follows that $c(jh)=0$ if $(h,j)>1$. Our first aim in this section is to compute the sum 
\[
\sum _{\underset{(h, jq)=1}{h \leq X/j}}\frac{c(jh)}{h^{\gamma}}
\]
for $j\leq X$, $(j,q)=1$, $|\gamma -1|\ll 1/\log X$. Put
\begin{equation}
\label{Gjq}
G_{j, q}(\gamma)=\sum _{\underset{(n, jq)=1}{n \leq y/j}} \frac{\mu (n)}{n^{\gamma}}  P\left( \frac{\log y/nj}{\log y} \right).
\end{equation}
The following lemma is an analogue of Lemma 10 of \cite{C2}.
\begin{lem}
Let $P$ be a real polynomial with $P(0)=0$. For $y>1$, $(j,q)=1$, $|\gamma -1|\ll 1/\log y$, we have
\[
G_{j,q}(\gamma)=M_{j,q}(\gamma)+O(E_{j,q}),
\]
where 
\[
M_{j,q}(\gamma)=\frac{1}{F(j,q)}\left\{(\gamma -1)P\left( \frac{\log y/j}{\log y} \right)+\frac{1}{\log y}P^{'}\left( \frac{\log y/j}{\log y} \right)   \right\},
\]
\[
E_{j, q}=\left( \frac{\log \log y}{\log y} \right)^{2}\left(1+(\log y)\left( \frac{j}{y} \right)^{\frac{1}{M\log \log y}} \right)F_{1}(jq, 1-2\delta ).
\]
Here, $F_{1}(a,s)=\prod _{p|a}(1+p^{-s})$, $\delta =1/\log \log y$ and $M$ is some positive constant. 
\end{lem}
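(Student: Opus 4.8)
The plan is to follow the contour–integral method of Lemma~10 of \cite{C2}. Write $P(x)=\sum_{r\ge1}a_{r}x^{r}$, which is legitimate since $P(0)=0$, and recall the exact identity $\frac{1}{2\pi i}\int_{(c)}u^{s}s^{-r-1}\,ds=(\log u)^{r}/r!$ for $u\ge1$, $=0$ for $0<u<1$, valid for any $c>0$ and absolutely convergent when $r\ge1$. Applying it with $u=y/(nj)$ turns the weight $P\left(\frac{\log y/nj}{\log y}\right)$ into $\frac{1}{2\pi i}\int_{(c)}(y/nj)^{s}\widehat{P}(s)\,ds$ with $\widehat{P}(s)=\sum_{r\ge1}a_{r}r!(\log y)^{-r}s^{-r-1}\ll|s|^{-2}$; since this vanishes for $nj>y$, the constraint $n\le y/j$ becomes automatic and the sum may be completed to all $n\ge1$. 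Taking $c\asymp1/\log y$, chosen so that $\Re(\gamma+s)>1$ on the line of integration (possible because $|\gamma-1|\ll1/\log y$) and so that $|(y/j)^{s}|\ll1$ there, and interchanging the absolutely convergent sum and integral, the inner series is $\sum_{(n,jq)=1}\mu(n)n^{-\gamma-s}=\zeta(\gamma+s)^{-1}F(jq,\gamma+s)^{-1}$, so
\[
G_{j,q}(\gamma)=\frac{1}{2\pi i}\int_{(c)}\Bigl(\frac{y}{j}\Bigr)^{s}\frac{\widehat{P}(s)}{\zeta(\gamma+s)\,F(jq,\gamma+s)}\,ds .
\]

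Next I would truncate the integral at height $|\Im s|=T_{0}:=(\log y)^{c_{1}}$ with $c_{1}>0$ small (the discarded tail is $\ll(\log y)^{-1}\int_{T_{0}}^{\infty}t^{-2}\log t\,dt$, negligible), and shift the truncated contour to the line $\Re(\gamma+s)=1-2\delta$, $\delta=1/\log\log y$. For $|\Im s|\le T_{0}$ this line lies in the classical zero-free region of $\zeta$ provided $c_{1}$ is small in terms of the zero-free constant, so $\zeta(\gamma+s)^{-1}\ll\log(|\Im s|+2)\ll\log\log y$ there; also $F(jq,\cdot)$ has no zeros in the thin strip $1-2\delta\le\Re(\gamma+s)\le\Re\gamma+c$, so the only singularity crossed is the pole of $\widehat{P}$ at $s=0$, of order $\deg P+1$ (the pole of $\zeta$ at $1$ gives $\zeta^{-1}$ a zero, not a pole, near $s=0$). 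On the shifted line $|(y/j)^{s}|\le(j/y)^{\delta}$, $|F(jq,\gamma+s)|^{-1}\ll F_{1}(jq,1-2\delta)$, and $\int_{-T_{0}}^{T_{0}}|\widehat{P}|\ll\log\log y/\log y$ (since $|\widehat{P}(s)|\ll(\log y)^{-1}|s|^{-2}$ with $|s|\gg\delta$), so the shifted integral together with the two horizontal connectors is $\ll\bigl(\frac{\log\log y}{\log y}\bigr)^{2}(\log y)(j/y)^{\delta}F_{1}(jq,1-2\delta)$, which is the $(\log y)(j/y)^{1/(M\log\log y)}$ part of $E_{j,q}$.

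It remains to compute the residue at $s=0$, which I expect to recover $M_{j,q}(\gamma)$. Near $s=0$ one has $\zeta(\gamma+s)^{-1}=(\gamma-1+s)\bigl(1+O(|\gamma-1+s|)\bigr)$ (this is $\zeta(1+w)^{-1}=w+O(w^{2})$, valid uniformly as $|\gamma-1|\ll1/\log y$), $F(jq,\gamma+s)^{-1}=F(jq,1)^{-1}\bigl(1+O((|s|+|\gamma-1|)\mathfrak{g})\bigr)$ with $\mathfrak{g}:=\sum_{p|jq}(\log p)/p$, together with $(y/j)^{s}=\sum_{k\ge0}s^{k}(\log y/j)^{k}/k!$ and $\widehat{P}(s)=\sum_{r}a_{r}r!(\log y)^{-r}s^{-r-1}$. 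Taking the residue of the product of the leading factors, and using $\mathrm{Res}_{s=0}\,e^{us}s^{-r-1}=u^{r}/r!$ and $\mathrm{Res}_{s=0}\,e^{us}s^{-r}=u^{r-1}/(r-1)!$, reproduces exactly
\[
\frac{1}{F(j,q)}\left\{(\gamma-1)P\left(\frac{\log y/j}{\log y}\right)+\frac{1}{\log y}P'\left(\frac{\log y/j}{\log y}\right)\right\}=M_{j,q}(\gamma),
\]
where $\frac{1}{F(j,q)}=\prod_{p|jq}(1-1/p)^{-1}$. Every remaining term in the residue carries at least two of the small quantities $\gamma-1$ and $1/\log y$, hence is $\ll(\log y)^{-2}$ (the argument $\log(y/j)/\log y$ lying in $[0,1]$), times bounded powers of $\mathfrak{g}$ and the factor $\prod_{p|jq}(1-1/p)^{-1}$; estimating these by Mertens-type bounds over the $\ll\log(jq)/\log\log(jq)$ prime divisors of $jq$ gives the remaining part $(\log\log y/\log y)^{2}F_{1}(jq,1-2\delta)$ of $E_{j,q}$, and adding the two error contributions yields $G_{j,q}(\gamma)=M_{j,q}(\gamma)+O(E_{j,q})$.

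The main obstacle is uniformity in $q$ combined with how far left the contour can be pushed: one must couple the truncation height $T_{0}=(\log y)^{c_{1}}$ with the shift parameter $\delta\asymp1/\log\log y$ so that the zero-free region of $\zeta$ is respected for every $|\Im s|\le T_{0}$ — this is precisely why $\delta$ can only be taken as small as $1/\log\log y$, which in turn fixes the shape $(j/y)^{1/(M\log\log y)}$ and the factor $F_{1}(jq,1-2\delta)$ in $E_{j,q}$ — and one must track carefully how the up to $\sim\log(jq)/\log\log(jq)$ prime divisors of $jq$ pass through $F(jq,\cdot)^{-1}$ and its Taylor coefficients, since these generate all the powers of $\log\log y$ in $E_{j,q}$. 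The hypothesis $|\gamma-1|\ll1/\log y$ is used throughout to keep $\gamma-1$ of the same order as $1/\log y$; the residue expansion itself, though explicit and somewhat lengthy, is routine, and pinning down the constant $M$ and the precise shape of $E_{j,q}$ is bookkeeping.
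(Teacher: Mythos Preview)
Your approach is the same contour-integral argument as the paper's (itself taken from Lemma~10 of \cite{C2}): express $G_{j,q}(\gamma)$ as a Mellin-type integral of $\zeta^{-1}F(jq,\cdot)^{-1}$, shift into the zero-free region, and extract $M_{j,q}(\gamma)$ as the leading part of the residue. The residue computation and the identification of the main term are correct.

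There is, however, a genuine parameter conflict in your contour choice. You truncate at height $T_{0}=(\log y)^{c_{1}}$ with $c_{1}$ \emph{small} (so that the line $\Re(\gamma+s)=1-2\delta$, $\delta=1/\log\log y$, stays in the classical zero-free region up to height $T_{0}$; this forces $c_{1}<c_{0}/2$ with $c_{0}$ the zero-free constant). But then your tail estimate $\ll(\log y)^{-1}\int_{T_{0}}^{\infty}t^{-2}\log t\,dt\asymp (\log\log y)/(\log y)^{1+c_{1}}$ is \emph{not} absorbed by $E_{j,q}$ when $j$ is small: for $j=1$ the relevant piece of $E_{j,q}$ is $(\log\log y/\log y)^{2}$, and $(\log\log y)/(\log y)^{1+c_{1}}\gg(\log\log y/\log y)^{2}$ whenever $c_{1}<1$. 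So the two constraints on $c_{1}$ are incompatible.

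The paper (following \cite{C2}) resolves this by reversing the trade-off: it takes the truncation height large, $(\log y)^{10}$, putting the tails on $\Re s=1$ where they are $\ll(\log y)^{-9}$, and compensates by shifting only to $\Re s=1-b$ with $b=1/(M\log\log y)$ and $M$ large enough (specifically $M>10/c_{0}$) to remain in the zero-free region up to that height. This smaller shift is precisely why the exponent in $E_{j,q}$ is $1/(M\log\log y)$ rather than your $\delta=1/\log\log y$. With that single adjustment your argument goes through; the remaining bookkeeping (Cauchy estimates for $Z^{(k)}(\gamma)$ on a disk of radius $\asymp 1/\log\log y$, which is cleaner than tracking powers of $\mathfrak{g}$ directly) is as you indicate.
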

The proof of the above formula can be obtained by following the argument in \cite{C2}, hence we introduce only the outline of the proof.
\begin{proof}
Put $x=y/j$. By expanding the polynomial  and expressing the sum over $n$ using Cauchy's integral in (\ref{Gjq}), we have
\[
G_{j,q}(\gamma)=\sum _{l\geq 1}\frac{P^{(l)}(0)}{\log ^{l}y}\frac{1}{2\pi i}\int _{2-i\infty}^{2+i\infty}\frac{x^{s-\gamma}}{\zeta (s)F(jq, s)(s-\gamma)^{l+1}} ds.
\]
The integrand has a pole of order $l+1$ at $s=\gamma$. Put $b=1/(M\log \log y)$. Let $R_{0,l}(\gamma)$ be the residue from the pole at $s=\gamma$,  $R_{1,l}(\gamma)$ be the integral on $s=1+it$, $-\infty <t\leq  -(\log y)^{10}$, $R_{2, l}(\gamma)$ be the integral on $s=\sigma -i(\log y)^{10}$, $1-b \leq \sigma \leq 1$, $R_{3, l}(\gamma)$ be the integral on $s=1-b+it$, $-(\log y)^{10}\leq t \leq (\log y)^{10}$, and $R_{4,l}(\gamma)$ and $R_{5,l}(\gamma)$ be the integrals on paths conjugate to the paths of $R_{2,l}(\gamma)$, $R_{1,l}(\gamma)$, respectively. We choose $M>0$ sufficiently large so that $\zeta (s)$ doesn't vanish in the right side of  $R_{1,l}(\gamma )+\ldots +R_{5,l}(\gamma )$. Then, by moving the path of integration to $R_{1,l}(\gamma )+\ldots +R_{5,l}(\gamma )$, we have 
\begin{equation}
\label{A}
G_{j,q}(\gamma)=\sum _{l \geq 1}\frac{P^{(l)}(0)}{\log ^{l}y}\frac{1}{2\pi i}\left( 2\pi i R_{0,l}(\gamma )+\sum _{m=1}^{5}R_{m,l}(\gamma) \right).
\end{equation}
$R_{1, l}(\gamma), \ldots ,R_{5,l}(\gamma)$ are obtained by replacing $\beta$ with $\gamma$, $F(j,s)$ with $F(jq, s)$ in the proof of Lemma 10 of \cite{C2}. Hence by the argument of \cite{C2}, we have 
\[
R_{1, l}(\gamma), \;  R_{5,l}(\gamma) \ll F_{1}(jq, 1-2\delta )(\log y)^{-9},
\]
\[
R_{2,l}(\gamma), \; R_{4,l}(\gamma) \ll F_{1}(jq, 1-2\delta )(\log y)^{-20},
\]
\[
R_{3,l}(\gamma)\ll F_{1}(jq, 1-2\delta )(\log \log y)^{l+1}\left( \frac{j}{y} \right)^{b}.
\]
Hence the contribution of these integrals to (\ref{A}) is at most
\begin{equation}
\label{B}
\frac{(\log \log y)^{2}}{(\log y)^{2}}\left( 1+ (\log y)\left( \frac{j}{y} \right)^{b} \right)F_{1}(jq, 1-2\delta ).
\end{equation}
On the other hand, $R_{0,l}(\gamma)$ is expanded by
\begin{equation}
\label{C}
R_{0, l}(\gamma)=\frac{1}{l!}\sum _{k=0}^{l}\binom{l}{k}(\log x)^{l-k}Z^{(k)}(\gamma),
\end{equation}
where 
\[
Z(s)=\frac{1}{\zeta (s)F(jq, s)}.
\]
The function $Z(\gamma)$ and its derivatives are expanded by
\[
Z(\gamma)=\frac{\gamma -1 +O(|\gamma -1|^{2})}{F(jq, \gamma)}, \quad 
Z^{'}(\gamma)=\frac{1}{F(jq, \gamma)}+O(F_{1}(jq, 1-2\delta )|\gamma -1|\log \log y),
\]
and derivatives satisfy the bound 
\[
Z^{(k)}(\gamma) \ll F_{1}(jq, 1-2\delta)(\log \log y)^{k-1}.
\]
Hence the contributions of the terms with $k\geq 2$ in (\ref{C}) and the error terms in $Z(\gamma)$ and $Z^{'}(\gamma)$ in the above formulas are bounded by (\ref{B}). Thus we have
\begin{equation}
\label{D}
\begin{aligned}
\sum _{l \geq 1}\frac{P^{(l)}(0)}{\log ^{l}y}R_{0, l}(\gamma) &=\frac{1}{F(jq, \gamma)}\left\{(\gamma -1)P\left( \frac{\log y/j}{\log y} \right)+\frac{1}{\log y}P^{'} \left( \frac{\log y/j}{\log y} \right)  \right\} \\
& \quad +O\left( \frac{(\log \log y)^{2}}{(\log y)^{2}}\left( 1+ (\log y)\left( \frac{j}{y} \right)^{b} \right)F_{1}(jq, 1-2\delta ) \right).
\end{aligned}
\end{equation}
Combining (\ref{D}) with the estimates for $R_{m,l}(\gamma)$ $(m=1, \ldots ,5)$, we obtain the result.   
\end{proof}
By the definition of $c(n)$, we have
\begin{align*}
\sum _{\underset{(h, jq)=1}{h \leq X/j}}\frac{c(jh)}{h^{\gamma}} &=\mu (j) \sum _{\underset{(h, jq)=1}{h \leq X/j}}\frac{\mu (h)}{h^{\gamma}}  \left\{ P_{1}\left( \frac{\log X/jh}{\log X} \right)+\sum _{p_{1}p_{2}|jh}\frac{\log p_{1}\log p_{2}}{\log ^{2}X} P_{2}\left( \frac{\log X/jh}{\log X} \right) \right. \\
& \left. \quad \quad \quad \quad \quad \quad  +\ldots +\sum _{p_{1}\cdots p_{I}|jh}\frac{\log p_{1}\cdots \log p_{I}}{\log ^{I}X} P_{I}\left( \frac{\log X/jh}{\log X} \right) \right\}.
\end{align*}
The left hand side is obtained by replacing $\alpha$ with $\gamma -1$, letting $y=y_{1}=X$ and adding the condition $(h,q)=1$ in the definition of $j^{1+\alpha}E_{\alpha}(j)$ in \cite{F}, (2.12). The sum
\[
\sum _{\underset{(h, jq)=1}{h \leq X/j}}\frac{\mu (h)}{h^{\gamma}}P_{1}\left( \frac{\log X/jh}{\log X} \right)
\]
has been obtained by Lemma 3.1. The contributions of the other terms  are obtained by inductive arguments in Feng's paper \cite{F}, combining with Lemma 3.1. The only difference between Feng's argument and ours is that in our situation the sum has another restriction that $(h,q)=1$, which doesn't cause any significant discrepancy in the arguments.  Similar to the proof of Lemma 3.1, this difference is modified by replacing $F(j, 1+\alpha)$, $F_{1}(j, 1+\alpha)$ with $F(jq, 1+\alpha)$, $F_{1}(jq, 1+\alpha)$, respectively. Hence we arrive at the following intermediate lemma. 
\begin{lem}[\cite{F}, (3.27)]
We have
\begin{equation}
\label{ch}
\begin{aligned}
& \sum _{\underset{(h, jq)=1}{h \leq X/j}}\frac{c(jh)}{h^{\gamma}} \\
&\quad =\frac{\mu (j)}{F(jq, \gamma)} \left\{ G_{0}(\gamma -1, j)+G_{1}(\gamma -1, j)\sum _{p_{1}|j}\log p_{1} \right. \\
& \left. \quad \quad  +G_{2}(\gamma -1, j)\sum _{p_{1}p_{2}|j}\log p_{1}\log p_{2}+\cdots +G_{I}(\gamma -1,j)\sum _{p_{1}\cdots p_{I}|j}\log p_{1}\cdots \log p_{I} \right\} \\
& \quad \quad  +O\left( \frac{\mu (j)F_{1}(jq, 1-2\delta )(\log \log X)^{3}}{j \log ^{2}X} \right) +O\left( \frac{\mu (j)F_{1}(jq, 1-2\delta )(\log \log X)^{2}}{j \log X} \left( \frac{j}{X} \right)^{d} \right).
\end{aligned}
\end{equation}
Here, $\delta =1/\log \log X$, $d=1/(M\log \log X)$ and $G_{i}(\alpha ,X)$ $(i=0, \ldots ,I)$ are defined by 
\begin{align*}
G_{0}(\alpha ,j)  =&\alpha P_{1}\left( \frac{\log X/j}{\log X} \right) +\frac{1}{\log X}P_{1}^{'}\left( \frac{\log X/j}{\log X} \right) \\
& \quad  +\sum _{l=2}^{I} \frac{(-1)^{l}}{(l-2)!\log ^{l}X}\int _{1}^{\frac{X}{j}}P_{l}\left( \frac{\log X/jx}{\log X} \right) \frac{\log ^{l-2}x}{x^{1+\alpha}} dx, 
\end{align*}
\begin{align*}
G_{1}(\alpha ,j) =&-\frac{2}{\log ^{2}X}P_{2}\left( \frac{\log X/j}{\log X} \right) \\
& \quad  +\sum _{l=3}^{I}\binom{l}{1} \frac{(-1)^{l-1}}{(l-3)!\log ^{l}X}\int _{1}^{\frac{X}{j}}P_{l}\left( \frac{\log X/jx}{\log X} \right) \frac{\log ^{l-3}x}{x^{1+\alpha}} dx,
\end{align*}
\begin{align*}
G_{m}(\alpha ,j) =&\frac{1}{\log ^{m}X}\left( \alpha P_{m}\left( \frac{\log X/j}{\log X} \right) +\frac{1}{\log X}P_{m}^{'}\left( \frac{\log X/j}{\log X} \right) \right) \\
& \quad  - \binom{m+1}{m}\frac{1}{\log ^{m+1}X}P_{m+1}\left( \frac{\log X/j}{\log X} \right) \\
& \quad  +\sum _{l=m+2}^{I}\binom{l}{m}\frac{(-1)^{l-m}}{(l-m-2)!\log ^{l}X}\int _{1}^{\frac{X}{j}}P_{l}\left( \frac{\log X/jx}{\log X} \right) \frac{\log ^{l-m-2}x}{x^{1+\alpha}} dx
\end{align*}
$(2\leq m \leq I-2)$, 
\begin{align*}
G_{I-1}(\alpha ,j)=\frac{\alpha P_{I-1}\left( \frac{\log X/j}{\log X} \right) +\frac{1}{\log X}P_{I-1}^{'}\left( \frac{\log X/j}{\log X} \right)}{\log ^{I-1}X}-\frac{I}{\log ^{I}X}P_{I}\left( \frac{\log X/j}{\log X} \right),
\end{align*}
and 
\begin{align*}
G_{I}(\alpha ,j)=\frac{1}{\log ^{I}X}\left( \alpha P_{I}\left( \frac{\log X/j}{\log X} \right) +\frac{1}{\log X}P_{I}^{'}\left( \frac{\log X/j}{\log X} \right) \right).
\end{align*}
\end{lem}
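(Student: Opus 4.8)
The plan is to establish this as the coprimality-restricted version of Feng's identity \cite{F}, (3.27): I would follow Feng's derivation step by step, carrying the extra constraint $(h,q)=1$ through each stage and checking that it only replaces Euler products ranging over $p\mid j$ by the corresponding products over $p\mid jq$, exactly as happened in the proof of Lemma 3.1.

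First I would use the definition of $c(n)$ from (\ref{14.5}) to write, for squarefree coprime $j$ and $h$, $c(jh)=\mu(j)\mu(h)\bigl\{P_1(\cdots)+\sum_{p_1p_2\mid jh}\tfrac{\log p_1\log p_2}{\log^2 X}P_2(\cdots)+\cdots+\sum_{p_1\cdots p_I\mid jh}\tfrac{\log p_1\cdots\log p_I}{\log^I X}P_I(\cdots)\bigr\}$ (and $c(jh)=0$ otherwise), every polynomial argument being $\log(X/jh)/\log X$. Hence $\sum_{(h,jq)=1,\,h\le X/j}c(jh)h^{-\gamma}$ equals $\mu(j)$ times a sum of $I$ pieces, one attached to each $P_\nu$. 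The $\nu=1$ piece is exactly $\mu(j)\,G_{j,q}(\gamma)$ with $y=X$ in the notation of (\ref{Gjq}), so Lemma 3.1 (applicable since $P_1(0)=0$) evaluates it as $\tfrac{\mu(j)}{F(jq,\gamma)}\bigl\{(\gamma-1)P_1(\tfrac{\log X/j}{\log X})+\tfrac1{\log X}P_1'(\tfrac{\log X/j}{\log X})\bigr\}$ plus the error $\mu(j)E_{j,q}$; this is the $l=1$ contribution to the coefficient $G_0$.

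For the piece attached to $P_\nu$ with $\nu\ge2$, I would distribute the prime factors occurring in $p_1\cdots p_\nu\mid jh$ between $j$ and $h$: the $i$ of them dividing $j$ factor out and produce the weight $\sum_{p_1\cdots p_i\mid j}\log p_1\cdots\log p_i$ that multiplies $G_i$ in the statement, while the remaining $\nu-i$ primes are forced to divide the squarefree variable $h$. Summing $\mu(h)h^{-\gamma}P_\nu(\tfrac{\log X/jh}{\log X})$ over such $h$ is Feng's inductive step: one peels the forced primes off $h$ and, reducing as in the Cauchy-integral and contour-shifting argument behind Lemma 3.1 applied to (derivatives of) $1/(\zeta(s)F(jq,s))$, one picks up on the one hand the boundary terms $\alpha P_m(\tfrac{\log X/j}{\log X})+\tfrac1{\log X}P_m'(\tfrac{\log X/j}{\log X})$ and $-\binom{m+1}{m}\tfrac1{\log^{m+1}X}P_{m+1}(\cdots)$, and on the other hand, when $k\ge1$ primes remain attached to the cofactor, the weighted integrals $\int_1^{X/j}P_l(\tfrac{\log X/jx}{\log X})\log^{l-m-2}x\,x^{-1-\alpha}\,dx$ carrying the combinatorial factors $\binom lm$ and $1/(l-m-2)!$. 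Reassembling over $i$ and $\nu$ and setting $\alpha=\gamma-1$ collects everything into the stated main term $\tfrac{\mu(j)}{F(jq,\gamma)}\bigl\{G_0+G_1\sum_{p\mid j}\log p+\cdots+G_I\sum_{p_1\cdots p_I\mid j}\log p_1\cdots\log p_I\bigr\}$.

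Finally I would collect the errors: each inductive stage contributes an error of the same shape as $E_{j,q}$ in Lemma 3.1, of size $\ll F_1(jq,1-2\delta)\,(\log\log X)^{O(1)}\,j^{-1}\bigl(\log^{-2}X+\log^{-1}X\,(j/X)^{d}\bigr)$ after the factor $\mu(j)$ is extracted and a $1/j$ gained from the cofactor, and since $I$ is fixed there are only boundedly many stages, so the total reduces to the two error terms displayed in the statement with $\delta=1/\log\log X$ and $d=1/M\log\log X$. I expect the only genuine difficulty to be the combinatorial bookkeeping in this induction — matching the allocation of primes between $j$ and $h$, the arising binomial coefficients and factorials, and the boundary-versus-integral split, against the precise definitions of $G_0,\dots,G_I$ (which take one generic form for $2\le m\le I-2$ and four special forms at $m=0,1,I-1,I$) — since the analytic input is entirely supplied by Lemma 3.1 and the constraint $(h,q)=1$ introduces nothing beyond the uniform substitutions $F(j,\cdot)\mapsto F(jq,\cdot)$ and $F_1(j,\cdot)\mapsto F_1(jq,\cdot)$.
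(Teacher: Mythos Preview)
Your proposal is correct and matches the paper's approach essentially line for line: the paper likewise expands $c(jh)$, evaluates the $P_1$ piece directly via Lemma~3.1, and for the remaining pieces invokes Feng's inductive argument, noting that the extra restriction $(h,q)=1$ only forces the substitutions $F(j,\cdot)\mapsto F(jq,\cdot)$ and $F_1(j,\cdot)\mapsto F_1(jq,\cdot)$. Your anticipation that the combinatorial bookkeeping is the only real work is exactly the paper's view as well.
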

Combining (\ref{ch}) and Feng's estimates on error terms, it follows that 
\begin{equation}
\label{33}
\begin{aligned}
& \sum _{\underset{(hk, q)=1}{h, k\leq X}}\frac{c(h)c(k)(h,k)^{1+\alpha +\beta}}{h^{1+\beta +it}k^{1+\alpha -it}} \\
& \quad =\sum _{\underset{(j,q)=1}{j \leq X}}\frac{F(j, 1+\alpha +\beta )}{j}\sum _{\underset{(hk,q)=1}{h, k \leq X/j}}\frac{c(jh)c(jk)}{h^{1+\beta +it}k^{1+\alpha -it}} \\
& \quad =\sum _{\underset{(j,q)=1}{j \leq X}} \frac{\mu ^{2}(j)F(j, 1+\alpha +\beta )}{jF(jq, 1+\beta +it)F(jq, 1+\alpha -it)} \\
& \quad \quad \times \left\{ G_{0}(\beta +it,j)+G_{1}(\beta +it, j)\sum _{p_{1}|j}\log p_{1} \right. \\
& \left. \quad \quad \quad  +G_{2}(\beta +it,j)\sum _{p_{1}p_{2}|j}\log p_{1}\log p_{2}+\ldots +G_{I}(\beta +it,j)\sum _{p_{1}\cdots p_{I}|j}\log p_{1}\cdots \log p_{I} \right\} \\
& \quad \quad \times \left\{ G_{0}(\alpha -it,j)+G_{1}(\alpha -it, j)\sum _{p_{1}|j}\log p_{1} \right. \\
& \left. \quad \quad \quad +G_{2}(\alpha -it,j)\sum _{p_{1}p_{2}|j}\log p_{1}\log p_{2}+\ldots +G_{I}(\alpha -it,j)\sum _{p_{1}\cdots p_{I}|j}\log p_{1}\cdots \log p_{I} \right\} \\
& \quad \quad +E,
\end{aligned}
\end{equation}
where $E$ is obtained by replacing $F(j, 1+\alpha +\beta)$, $F_{1}(j, 1-2\delta)$, $\alpha$, $\beta$, $y$ with $F(jq, 1+\alpha +\beta)$, $F_{1}(jq, 1-2\delta)$, $\alpha -it$, $\beta +it$, $X$, respectively in the definition of $U_{2}+\ldots +U_{8}+U_{2}^{'}+\ldots +U_{8}^{'}$ in (4.1) of \cite{F}. By (4.21) of \cite{F}, we have 
\begin{equation}
\label{34}
\begin{aligned}
E=& O\left(F_{1}^{3}(q, 1-2\delta)\frac{(\log \log X)^{5}}{\log ^{2}X}  \right) \\
=&O\left(\prod _{p|q}\left(1+\frac{1}{p^{1-2\delta}} \right)^{3} \frac{(\log \log X)^{5}}{\log ^{2}X}  \right),
\end{aligned}
\end{equation}
where $\delta =1/\log \log X$. 
\begin{lem}
The contribution of $E$ to (\ref{32.75}) is $O(Q^{2}\ (\log \log X)^{5}/(\alpha +\beta )\log ^{2}X)$ uniformly for $|\alpha -it|, |\beta +it| \ll 1/ \log X$. 
\end{lem}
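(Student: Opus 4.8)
The plan is first to identify the quantity to be bounded. ``The contribution of $E$ to (\ref{32.75})'' is what one obtains by substituting the expansion (\ref{33}) into the first brace of (\ref{32.75}) and keeping only the error piece $E$; there is an entirely parallel expansion, with an error term $E'$ of the same shape (\ref{34}), for the second $h,k$-sum in (\ref{32.75}). So the object to estimate is
\[
\mathcal{S}=\sum_{q}W\!\left(\frac qQ\right)\varphi^{*}(q)\,|H_{q}(\sigma+it)|^{-2}\left(\frac q\pi\right)^{\frac{\alpha+\beta}{2}}\Gamma\!\left(\frac14+\frac\alpha2\right)\Gamma\!\left(\frac14+\frac\beta2\right)\zeta_{q}(1+\alpha+\beta)\,E
\]
together with the analogue $\mathcal{S}'$ obtained by $\alpha+\beta\mapsto-(\alpha+\beta)$, the gamma arguments $\tfrac14+\tfrac\alpha2,\tfrac14+\tfrac\beta2$ replaced by $\tfrac14-\tfrac\alpha2,\tfrac14-\tfrac\beta2$, and $E\mapsto E'$; I would bound $\mathcal S$ and $\mathcal S'$ by the same argument.

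The crucial step---and the one I expect to be the main obstacle---is the cancellation of the archimedean factors. Individually $\Gamma(\tfrac14+\tfrac\alpha2)\Gamma(\tfrac14+\tfrac\beta2)$ is exponentially small, and $|H_{q}(\sigma+it)|^{-2}$ exponentially large, in $|t|\asymp T$, so they must be matched precisely and uniformly in $q,t,\alpha,\beta$. Writing $|H_{q}(\sigma+it)|^{2}=(q/\pi)^{\sigma-1/2}|\Gamma(\tfrac\sigma2+\tfrac{it}2)|^{2}$ and using $\sigma=\tfrac12-R/\log QT$ together with $|\alpha-it|,|\beta+it|\ll1/\log X$, the arguments $\tfrac14+\tfrac\alpha2,\tfrac14+\tfrac\beta2$ differ from $\tfrac\sigma2+\tfrac{it}2,\tfrac\sigma2-\tfrac{it}2$ by $O(1/\log X)$; since $|t|$ is at most a fixed power of $Q$ in the relevant range and $\log X\asymp\log Q$, the local behaviour of $\log\Gamma$ gives $\Gamma(\tfrac14+\tfrac\alpha2)\Gamma(\tfrac14+\tfrac\beta2)\asymp|\Gamma(\tfrac\sigma2+\tfrac{it}2)|^{2}$ (indeed with implied constant $1+o(1)$ when $T\le(\log Q)^{A}$). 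Combined with $(q/\pi)^{(\alpha+\beta)/2}\ll1$ (because $|\alpha+\beta|\log q\ll1$), the combination of archimedean factors and powers of $q$ multiplying $E$ is $\ll(q/\pi)^{1/2-\sigma}\ll1$, so
\[
\mathcal S\ll\sum_{q}W\!\left(\frac qQ\right)\varphi^{*}(q)\,\big|\zeta_{q}(1+\alpha+\beta)\big|\,|E| ,
\]
and likewise for $\mathcal S'$, the argument being symmetric with $1-\alpha-\beta$ within $O(1/\log X)$ of $1$.

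Finally I would estimate this sum by elementary means. Since $|\Re(\alpha+\beta)|\ll1/\log X$, $\zeta$ lies near its simple pole, so $|\zeta(1\pm(\alpha+\beta))|\ll|\alpha+\beta|^{-1}$ (this is the source of the $1/(\alpha+\beta)$ in the statement), and $\big|\prod_{p\mid q}(1-p^{-1-\alpha-\beta})\big|\le\prod_{p\mid q}(1+c/p)$ for an absolute constant $c$, using $p^{-\Re(\alpha+\beta)}\ll1$ for $p\le q\ll Q$. Inserting $|E|\ll\prod_{p\mid q}(1+p^{-1+2\delta})^{3}(\log\log X)^{5}/\log^{2}X$ from (\ref{34}), with $\delta=1/\log\log X$, and $\varphi^{*}(q)\le q$, I am reduced to
\[
\sum_{q\le 2Q}q\prod_{p\mid q}\Big(1+\frac cp\Big)\big(1+p^{-1+2\delta}\big)^{3}\ll Q^{2}.
\]
Here the summand is $q$ times a multiplicative function $n$ with $n(p^{a})=(1+c/p)(1+p^{-1+2\delta})^{3}$ for all $a\ge1$; writing $n$ as the Dirichlet convolution of the constant function $1$ with $g$, the function $g$ is supported on squarefree integers with $g(p)=n(p)-1=c/p+3p^{-1+2\delta}+O(p^{-2})$, so $\sum_{d}|g(d)|/d=\prod_{p}(1+|g(p)|/p)\ll1$ uniformly for $\delta\le\tfrac14$ (since $\sum_{p}|g(p)|/p\ll\sum_{p}(p^{-2}+p^{-2+2\delta})\ll1$); hence $\sum_{q\le2Q}n(q)\le2Q\sum_{d}|g(d)|/d\ll Q$ and then $\sum_{q\le2Q}q\,n(q)\ll Q^{2}$. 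Putting the three steps together yields $\mathcal S+\mathcal S'\ll Q^{2}(\log\log X)^{5}/(|\alpha+\beta|\log^{2}X)$, and it is the convergence of this last Euler product---not a crude termwise bound, which would cost extra powers of $\log\log X$---that keeps the exponent equal to $5$.
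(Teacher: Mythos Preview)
Your proposal is correct and follows essentially the same route as the paper: bound the archimedean ratio $\Gamma(\tfrac14+\tfrac\alpha2)\Gamma(\tfrac14+\tfrac\beta2)/|H_q(\sigma+it)|^2$ by $O(1)$ via Stirling, extract $1/(\alpha+\beta)$ from $\zeta$ near its pole, insert the bound (\ref{34}) for $E$, and then show that the remaining sum over $q$ of a multiplicative weight is $\ll Q^2$. The only cosmetic differences are that the paper absorbs the Euler product of $\zeta_q$ into a fourth power $\prod_{p\mid q}(1+p^{-1+2\delta})^4$ and then bounds this by $\prod_{p\mid q}(1+15/p^{1-2\delta})=\sum_{n\mid q}d_{15}(n)/n^{1-2\delta}$, swapping the $q,n$ sums, whereas you keep the $\zeta_q$ factor separate and handle the resulting multiplicative function by writing it as $1\ast g$ with $\sum_d|g(d)|/d<\infty$; both devices give the required $\ll Q$ uniformly in $\delta\le 1/4$.
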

\begin{proof}
Recall that $\sigma =1/2-R/\log QT$. Then,
\[
\frac{\Gamma \left( \frac{1}{4}+\frac{\alpha}{2} \right)\Gamma \left( \frac{1}{4}+\frac{\beta}{2} \right)}{|H_{q}(\sigma +it )|^{2}}\sim \left( \frac{q}{\pi} \right)^{1-2\sigma}\frac{\Gamma \left( \frac{1}{4}+\frac{it}{2} \right)\Gamma \left(   \frac{1}{4}-\frac{it}{2}  \right)}{\Gamma \left( \frac{\sigma +it}{2} \right)\Gamma \left( \frac{\sigma -it}{2} \right)} \ll 1.
\]
Moreover, 
\begin{align*}
\zeta _{q}(1+\alpha +\beta )&=\zeta (1+\alpha +\beta )\prod _{p|q}\left( 1-\frac{1}{p^{1+\alpha +\beta }} \right) \\
&\ll \frac{1}{\alpha +\beta} \prod _{p|q}\left( 1+\frac{1}{p^{1-2\delta}} \right), 
\end{align*}
and
\[
\left( \frac{q}{\pi} \right)^{\frac{\alpha +\beta}{2}}\ll 1, \quad \varphi ^{*}(q)\ll Q, \quad W\left( \frac{q}{Q} \right)\ll 1.
\]
Hence the contribution of $E$ to (\ref{32.75}) is at most
\begin{align*}
& \sum _{q \in Q \mathrm{supp}W}\frac{Q}{\alpha +\beta} \prod _{p|q}\left( 1+\frac{1}{p^{1-2\delta}} \right)^{4}\frac{(\log \log X)^{5}}{\log ^{2}X} \\
& \quad  \ll \frac{Q(\log \log X)^{5}}{(\alpha +\beta )\log ^{2}X} \sum _{q \in Q \mathrm{supp}W} \prod _{p|q}\left( 1+\frac{1}{p^{1-2\delta}} \right)^{4} \\
& \quad  \ll \frac{Q(\log \log X)^{5}}{(\alpha +\beta )\log ^{2}X} \sum _{q \in Q \mathrm{supp}W}\prod _{p|q}\left( 1+\frac{15}{p^{1-2\delta}} \right) \\
& \quad  \ll \frac{Q(\log \log X)^{5}}{(\alpha +\beta )\log ^{2}X} \sum _{q \in Q \mathrm{supp}W}\sum  _{n|q}\frac{d_{15}(n)}{n^{1-2\delta}} \\
& \quad  \ll \frac{Q(\log \log X)^{5}}{(\alpha +\beta )\log ^{2}X} \sum _{n}\frac{d_{15}(n)}{n^{1-2\delta}}\sum _{\underset{n|q}{q\in Q \mathrm{supp}W}}1 \\
& \quad  \ll \frac{Q^{2}(\log \log X)^{5}}{(\alpha +\beta )\log ^{2}X}.
\end{align*}
\end{proof}
We compute the derivatives by using Cauchy's integral formula. In the integration, $\alpha$ (resp. $\beta$) moves on the circle of center $\sigma -1/2 +it$ (resp. $\sigma -1/2-it$) and radius $c_{1}/\log X$ (resp. $c_{2}/\log X$), where $c_{1}$ and $c_{2}$ are positive constants for which $\alpha +\beta \gg 1/\log X$ on these circles. Then, the contribution of the terms arising from derivatives of $E$ to (\ref{30}) is at most $O(Q^{2}(\log \log X)^{5}/\log X)=o(Q^{2})$.

To compute the sum over $j$ in (\ref{33}), we use the following lemmas. 
\begin{lem}[\cite{L}, Lemma 3.9]
For large square-free $j$, 
\[
\sum _{p|j}\frac{\log p}{p}=O(\log \log j).
\]
\end{lem}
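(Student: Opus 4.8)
The plan is to prove this by elementary estimates, reducing to Mertens' first theorem after separating the small and large prime divisors of $j$. First I would write
\[
\sum_{p|j}\frac{\log p}{p}=\sum_{\substack{p|j\\ p\le \log j}}\frac{\log p}{p}+\sum_{\substack{p|j\\ p> \log j}}\frac{\log p}{p}
\]
and estimate the two pieces separately. For the first sum one may simply drop the condition $p|j$ and apply Mertens' estimate $\sum_{p\le x}(\log p)/p=\log x+O(1)$ with $x=\log j$, which contributes $\log\log j+O(1)$; this term is responsible for the stated order of magnitude.

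For the second sum the key observation is that $j$ has few prime divisors exceeding $\log j$: the product of the distinct primes dividing $j$ divides $j$ (here the squarefree hypothesis is convenient but not essential), so if $m$ denotes the number of primes $p|j$ with $p>\log j$, then $(\log j)^{m}\le j$, i.e.\ $m\le \log j/\log\log j$. Since $t\mapsto(\log t)/t$ is decreasing for $t\ge 3$, each such term is at most $(\log\log j)/\log j$ once $j$ is large, so the second sum is at most $m\cdot(\log\log j)/\log j\le 1$.

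Adding the two bounds gives $\sum_{p|j}(\log p)/p\le \log\log j+O(1)$, which is the claim. The argument is entirely routine; the only point requiring even minimal care is the counting of large prime divisors in the second step, and the only external input is Mertens' first theorem. An alternative route, perhaps closer to Levinson's original, is to observe that for squarefree $j$ the sum is largest when $j$ is the product of the first $\omega(j)$ primes, so that Chebyshev's lower bound $\theta(x)\gg x$ forces the largest relevant prime to be $\ll\log j$; combined with Mertens' estimate this again yields the bound $O(\log\log j)$.
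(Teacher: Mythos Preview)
Your proof is correct. The paper does not supply its own argument for this lemma; it is simply quoted from Levinson \cite{L}, Lemma~3.9, so there is nothing in the paper to compare against beyond the citation. Your splitting at the threshold $p=\log j$, combined with Mertens' first theorem for the small primes and the elementary count $m\le \log j/\log\log j$ for the large ones, is a standard and clean way to obtain the bound, and your alternative sketch (extremise by taking $j$ to be the product of the first $\omega(j)$ primes) is essentially Levinson's original route.
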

\begin{lem}[\cite{L}, Lemma 3.11 or \cite{F}, Lemma 6]
Let $N$ be a positive integer and put
\[
J(x)=\sum _{\underset{(n,N)=1}{n \leq x}}\frac{\mu ^{2}(n)}{n}\prod _{p|n}(1+f(p)),
\]
where 
\[
f(p)=O \left( \frac{1}{p^{c}} \right)
\]
for some $c>0$. Then, 
\[
J(x)=\prod _{p|N}\left(1-\frac{1}{p} \right)\prod _{(p,N)=1}\left(1-\frac{1}{p^{2}} \right)\left( 1+\frac{f(p)}{p+1} \right)\log x +O(\log \log (N+1)).
\]
The implied constant is independent of $N$ and $x$.
\end{lem}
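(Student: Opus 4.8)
The plan is to recognize the summand as a multiplicative function, factor its Dirichlet series through $\zeta$, read off the main term from the pole at $s=1$, and then---this is the real content---control everything else uniformly in $N$. (The lemma is quoted from \cite{L} and \cite{F}, so this is only the standard argument.)

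First I would set $g(n)=\mu^{2}(n)\,\mathbf{1}_{(n,N)=1}\prod_{p\mid n}(1+f(p))$, a multiplicative function with $g(p)=1+f(p)$ for $p\nmid N$, $g(p)=0$ for $p\mid N$, and $g(p^{k})=0$ for $k\geq 2$, so that $J(x)=\sum_{n\leq x}g(n)/n$. Its Dirichlet series factors as $\sum_{n}g(n)n^{-s}=\zeta(s)\mathcal{H}(s)$, where
\[
\mathcal{H}(s)=\prod_{p\mid N}\Bigl(1-\tfrac{1}{p^{s}}\Bigr)\prod_{p\nmid N}\Bigl(1-\tfrac{1}{p^{s}}\Bigr)\Bigl(1+\tfrac{1+f(p)}{p^{s}}\Bigr).
\]
Since each factor over $p\nmid N$ equals $1+f(p)p^{-s}-(1+f(p))p^{-2s}=1+O(p^{-c-\Re(s)})+O(p^{-2\Re(s)})$, the product $\mathcal{H}(s)$ is absolutely convergent and holomorphic for $\Re(s)>\max(1-c,\tfrac12)$; the algebraic identity $(1-p^{-2})(1+\tfrac{f(p)}{p+1})=1+\tfrac{f(p)}{p}-\tfrac{1+f(p)}{p^{2}}$ then shows that $\mathcal{H}(1)$ equals exactly the constant asserted in the lemma.

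Next I would pass to the partial sum by writing $g=\mathbf{1}*a$ with $a=g*\mu$; one checks that $a(p)=-1$ for $p\mid N$, $a(p)=f(p)$ and $a(p^{2})=-(1+f(p))$ for $p\nmid N$, and $a$ vanishes on all remaining prime powers, so $\sum_{d}a(d)d^{-s}=\mathcal{H}(s)$. Then
\[
J(x)=\sum_{d\leq x}\frac{a(d)}{d}\sum_{m\leq x/d}\frac{1}{m}=(\log x+\gamma)\sum_{d\leq x}\frac{a(d)}{d}-\sum_{d\leq x}\frac{a(d)\log d}{d}+O\Bigl(\tfrac{1}{x}\sum_{d\leq x}|a(d)|\Bigr),
\]
and writing $\sum_{d\leq x}=\sum_{d\geq 1}-\sum_{d>x}$, with $\sum_{d\geq1}a(d)/d=\mathcal{H}(1)$ and $\sum_{d\geq1}\frac{a(d)\log d}{d}=-\mathcal{H}'(1)$, the main term $\mathcal{H}(1)\log x$ separates off and one is left with
\[
J(x)=\mathcal{H}(1)\log x+\gamma\mathcal{H}(1)+\mathcal{H}'(1)+\sum_{d>x}\frac{a(d)\log(d/x)}{d}-\gamma\sum_{d>x}\frac{a(d)}{d}+O\Bigl(\tfrac{1}{x}\sum_{d\leq x}|a(d)|\Bigr).
\]

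The hard part will be bounding the five trailing quantities by $O(\log\log(N+1))$ uniformly in $N$ and $x$. The basic input $\sum_{d\geq 1}|a(d)|/d=\prod_{p\mid N}(1+\tfrac1p)\cdot O(1)\leq\tfrac{N}{\varphi(N)}\cdot O(1)\ll\log\log(N+1)$ disposes of the $O(1/x)$ term and of $\gamma\sum_{d>x}a(d)/d$ at once; log-differentiating the Euler product, recombining the pole of $\tfrac{d}{ds}\log\zeta(s)$ at $s=1$ with the factors over $p\mid N$, and invoking Lemma 3.5 gives $\mathcal{H}'(1)/\mathcal{H}(1)=-\sum_{p\mid N}\tfrac{\log p}{p-1}+O(1)\ll\log\log(N+1)$, while $|\mathcal{H}(1)|\ll 1$, so $\gamma\mathcal{H}(1)+\mathcal{H}'(1)$ is acceptable. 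The delicate term is $\sum_{d>x}a(d)\log(d/x)/d$: I would write $d=d'e$ with $e$ composed of primes dividing $N$ and $(d',N)=1$, and separate the cases $d'>\sqrt x$ (where the $(d',N)=1$ part forms a rapidly convergent series, so its tail beyond $\sqrt x$ is $\ll x^{-c'}\log\log(N+1)$) and $e>\sqrt x$ (controlled via $\mathbf{1}[e>\sqrt x]\leq(\log e/\log\sqrt x)^{k}$ together with $\sum_{e\mid N}\mu^{2}(e)(\log e)^{k}/e\ll_{k}(\log\log(N+1))^{k+1}$, once more from Lemma 3.5, the exponent $k$ chosen in terms of $\log x/\log\log N$). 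In the remaining range, where $\log x$ is only of the order of $\log\log N$, one bypasses all of this by the crude bound $J(x)\ll\log x$ (the same calculation with $|a|$ in place of $a$, which is entirely free of $N$) combined with $|\mathcal{H}(1)|\log x\ll\log x\ll\log\log(N+1)$. The step I expect to cost the most effort is this last one: patching the ranges and forcing the several powers of $\log\log(N+1)$ produced by the tail estimates to collapse to the single power claimed in the lemma.
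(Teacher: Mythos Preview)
The paper does not prove this lemma at all: it is quoted verbatim from Levinson \cite{L}, Lemma~3.11 (and Feng \cite{F}, Lemma~6) and used as a black box, so there is nothing to compare your argument against on the paper's side.

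Your approach---factor the Dirichlet series as $\zeta(s)\mathcal{H}(s)$, pass to the convolution $g=\mathbf{1}\ast a$, extract $\mathcal{H}(1)\log x$ from the full sums, and bound the tails---is the standard one and is how Levinson's proof runs. Your identification of $\mathcal{H}(1)$ with the product in the statement, the computation of the Dirichlet coefficients $a(p^{k})$, and the bounds $|\mathcal{H}(1)|\ll 1$, $\sum_{d}|a(d)|/d\ll\log\log(N+1)$, $\mathcal{H}'(1)\ll\log\log(N+1)$ are all correct. Two small remarks: you cite ``Lemma~3.5'' for the estimate $\sum_{p\mid N}\tfrac{\log p}{p-1}\ll\log\log(N+1)$, but in this paper that is Lemma~3.4 (Lemma~3.5 is the statement you are proving); and you should record that $a(p^{2})=0$ for $p\mid N$, so the ``$e$-part'' in your decomposition $d=d'e$ is automatically squarefree.

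You are right that the only genuinely delicate point is the tail $\sum_{d>x}a(d)\log(d/x)/d$ and the collapse to a single power of $\log\log(N+1)$. Your two-range strategy is sound: for $\log x\ll\log\log(N+1)$ the trivial bound $J(x)\ll\log x$ (uniform in $N$, since $\prod_{p\mid n}(1+f(p))\ll 1$) together with $|\mathcal{H}(1)|\ll 1$ finishes immediately; for $\log x$ large compared to $\log\log(N+1)$ the split $d=d'e$ works because the $d'$-tail beyond $\sqrt{x}$ decays like a power of $x$, absorbing any fixed power of $\log\log(N+1)$ coming from the $e$-sum. The moment trick on the $e$-side is then not strictly needed once the threshold between the two ranges is placed at a suitable fixed power of $\log\log(N+1)$ rather than at $\log\log(N+1)$ itself; this avoids the bookkeeping you anticipate in your last sentence.
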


\begin{lem}[\cite{F}, Lemma 7]
Let $m$ be a positive integer, $\alpha$  a complex number,  $f$  a continuous function and  $D \geq 1$. Then 
\begin{equation}
\label{integral}
\begin{aligned}
&\int _{1}^{D}\frac{1}{x_{1}^{1+\alpha}} \int _{1}^{D/x_{1}}\frac{1}{x_{2}^{1+\alpha}} \cdots \int _{1}^{D/x_{1}x_{2}\cdots x_{m-2}}\frac{1}{x_{m-1}^{1+\alpha}}\int _{1}^{D/ x_{1}x_{2}\cdots x_{m-1}}\frac{f(x_{1}x_{2}\cdots x_{m})}{x_{m}^{1+\alpha}} dx_{m} \cdots dx_{2}dx_{1} \\
& \quad =\int _{1}^{D}\frac{f(x)\log ^{m-1}x}{(m-1)!x^{1+\alpha}} dx.
\end{aligned}
\end{equation}
\end{lem}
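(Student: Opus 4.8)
The plan is to establish the identity by induction on $m$, using nothing beyond the change of variables $x=x_1t$ and Fubini's theorem. Since the integrand is continuous and every integration runs over a compact interval—note that at each stage $x_1\cdots x_k\le D$, so the nested upper limit $D/(x_1\cdots x_k)$ is always $\ge 1$ and each integral runs from a smaller to a larger endpoint—the interchange of the order of integration needs no further justification. Denote by $I_m(D)$ the left-hand side of \eqref{integral}. The case $m=1$ is immediate, since both sides equal $\int_1^D f(x)\,x^{-1-\alpha}\,dx$.

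For the inductive step, suppose the formula holds with $m-1$ in place of $m$, for an arbitrary continuous function. Freezing $x_1$ and applying this hypothesis to the inner $(m-1)$-fold integral over $x_2,\dots,x_m$—with upper limit $D/x_1$ and with the continuous function $t\mapsto f(x_1t)$ playing the role of $f$—collapses those integrals and yields
\[
I_m(D)=\frac{1}{(m-2)!}\int_1^D\frac{dx_1}{x_1^{1+\alpha}}\int_1^{D/x_1}\frac{f(x_1t)\,\log^{m-2}t}{t^{1+\alpha}}\,dt.
\]
Next I would substitute $x=x_1t$ in the inner integral: then $dt=dx/x_1$, and $t\in[1,D/x_1]$ corresponds exactly to $x\in[x_1,D]$, so the double integral becomes
\[
I_m(D)=\frac{1}{(m-2)!}\int_1^D\frac{dx_1}{x_1}\int_{x_1}^D\frac{f(x)\,\log^{m-2}(x/x_1)}{x^{1+\alpha}}\,dx.
\]
Interchanging the order of integration over the region $1\le x_1\le x\le D$ and evaluating the resulting inner integral by the substitution $v=\log(x/x_1)$ gives $\int_1^x\log^{m-2}(x/x_1)\,dx_1/x_1=(\log x)^{m-1}/(m-1)$; inserting this back produces the asserted identity $I_m(D)=\int_1^D f(x)\,\log^{m-1}x\,\big/\,\big((m-1)!\,x^{1+\alpha}\big)\,dx$, and the induction is complete.

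There is no genuine obstacle here; the argument is a routine manipulation. The only points that require a little care are keeping track of the integration limits under the substitution $x=x_1t$, and the observation that in the inductive step one must apply the induction hypothesis to the translated function $t\mapsto f(x_1t)$ rather than to $f$ itself—which is legitimate precisely because the statement is phrased for an arbitrary continuous function.
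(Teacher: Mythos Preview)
Your proof is correct. The paper itself does not supply a proof of this lemma but simply cites it as Lemma~7 of Feng~\cite{F}; your induction argument with the substitution $x=x_1t$ and Fubini is the standard route and is almost certainly what Feng does as well.
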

\begin{lem}[\cite{F}, Lemma 8]
For positive integers $m_{1}, m_{2}$ and square-free $j$, 
\begin{equation}
\label{psum}
\begin{aligned}
& \sum _{p_{1}\cdots p_{m_{1}}|j}\log p_{1} \cdots \log p_{m_{1}}\sum _{q_{1} \cdots q_{m_{2}}|j}\log q_{1} \cdots \log q_{m_{2}} \\
& \quad =\sum _{k=0}^{\min \{m_{1}, m_{2} \}}k! \binom{m_{1}}{k} \binom{m_{2}}{k}\sum _{p_{1}\cdots p_{m_{1}+m_{2}-k}|j}\log ^{2}p_{1} \cdots \log ^{2}p_{k}\log p_{k+1} \cdots \log p_{m_{1}+m_{2}-k}.
\end{aligned}
\end{equation}
\end{lem}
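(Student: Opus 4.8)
\textbf{Proof plan for Lemma~3.7 (Feng, \cite{F}, Lemma 8).} The statement is purely combinatorial — no analytic input is needed — so the plan is to expand the left-hand side into a single sum over ordered pairs of prime tuples and then reorganize it according to how much the two tuples overlap. Since $j$ is square-free, the condition $p_{1}\cdots p_{m_{1}}|j$ already forces $p_{1},\dots ,p_{m_{1}}$ to be distinct primes, each dividing $j$, and likewise for the $q$'s. Hence the left-hand side of (\ref{psum}) equals $\sum \log p_{1}\cdots \log p_{m_{1}}\log q_{1}\cdots \log q_{m_{2}}$, the sum running over all pairs $\big( (p_{i})_{i=1}^{m_{1}},(q_{l})_{l=1}^{m_{2}}\big)$ of such tuples with entries among the prime divisors of $j$.

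The main step is to attach to each such pair its ``overlap data''. Set $A=\{\, i\le m_{1}:\ p_{i}\in \{q_{1},\dots ,q_{m_{2}}\}\,\}$ and $k=|A|$; for $i\in A$ there is a unique $l$ with $p_{i}=q_{l}$, and this defines an injection $\phi \colon A\hookrightarrow \{1,\dots ,m_{2}\}$. Conversely, a pair is reconstructed exactly from the triple $(k,A,\phi )$ together with an assignment of pairwise distinct prime divisors of $j$ to the $m_{1}+m_{2}-k$ ``free slots'' — the $m_{1}$ positions of the first tuple and the $m_{2}-k$ positions of the second tuple lying outside $\mathrm{im}\,\phi$ — the remaining $k$ positions of the second tuple being filled by $q_{\phi (i)}=p_{i}$. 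One checks readily that the overlap data of the reconstructed pair is again $(k,A,\phi )$, so this is a bijection; here it matters that injectivity is imposed across \emph{all} free slots, which is what keeps the unshared primes distinct from the shared ones. For fixed $k$ there are $\binom{m_{1}}{k}$ choices of $A$ and $m_{2}(m_{2}-1)\cdots (m_{2}-k+1)=k!\binom{m_{2}}{k}$ choices of $\phi$.

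It then remains to track the weight. In $\prod _{i}\log p_{i}\cdot \prod _{l}\log q_{l}$ each of the $k$ shared primes $p_{i}$, $i\in A$, occurs twice (once as $p_{i}$, once as $q_{\phi (i)}$) and so contributes $\log ^{2}p_{i}$, while each of the other $m_{1}+m_{2}-2k$ primes contributes a single logarithm. Summing the weight over all injective assignments to the $m_{1}+m_{2}-k$ free slots therefore produces $\sum _{p_{1}\cdots p_{m_{1}+m_{2}-k}|j}\log ^{2}p_{1}\cdots \log ^{2}p_{k}\,\log p_{k+1}\cdots \log p_{m_{1}+m_{2}-k}$, since a sum over ordered tuples of distinct primes is unaffected by permuting which $k$ of the slots carry the squares. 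Multiplying by the count $k!\binom{m_{1}}{k}\binom{m_{2}}{k}$ and summing over $0\le k\le \min \{m_{1},m_{2}\}$ yields (\ref{psum}). The only place demanding real care is the bookkeeping of the distinctness constraints in the bijection above; an alternative, if one prefers, is an induction on $m_{2}$ in which $q_{m_{2}}$ is peeled off via the unrestricted factor $\sum _{q|j}\log q$ and one then corrects for the $m_{2}-1$ coincidences $q_{m_{2}}=q_{l}$, but the direct count seems cleanest.
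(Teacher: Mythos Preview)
Your combinatorial argument is correct: the bijection between pairs of tuples and triples $(k,A,\phi)$ together with an injective filling of the $m_{1}+m_{2}-k$ free slots is set up carefully, and the symmetry observation that relocating which $k$ slots carry the squares does not change an ordered-distinct-tuple sum is exactly what is needed to land on the right-hand side. Note, however, that the paper does not supply its own proof of this lemma at all --- it is simply quoted from Feng \cite{F} (Lemma~8) and used as a black box --- so there is no in-paper argument to compare yours against.
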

\begin{lem}[\cite{F}, (4.16)]
We have
\begin{equation}
\label{Ges}
G_{m}(\alpha , u)\ll  \frac{1}{\log ^{m+1}X}
\end{equation}
uniformly for  $u \leq X$, $0\leq m \leq I$, $\alpha \ll 1/\log X$. 
\end{lem}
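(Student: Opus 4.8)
The plan is to read off each $G_m(\alpha,u)$ from its explicit formula in Lemma 3.2 and estimate the finitely many summands one at a time, using two elementary facts: the polynomials $P_1,\dots,P_I$ are fixed, so they and all their derivatives are bounded by an absolute constant on the compact interval $[0,1]$; and the arguments $\frac{\log X/u}{\log X}$ and $\frac{\log X/ux}{\log X}$ lie in $[0,1]$ whenever $1\le u\le X$ and $1\le x\le X/u$, so every $P_\bullet(\cdots)$ and $P_\bullet'(\cdots)$ occurring below is $\ll 1$. Each $G_m$ then consists of (i) a bounded number of ``explicit'' terms of the form $\frac{1}{\log^a X}P_\bullet^{(b)}(\cdots)$, possibly carrying an extra factor $\alpha$, with $a\ge m+1$ after using $|\alpha|\ll 1/\log X$ to absorb that factor; and (ii) the integral terms $\frac{(-1)^{l-m}}{(l-m-2)!\,\log^l X}\int_1^{X/u}P_l\!\left(\frac{\log X/ux}{\log X}\right)\frac{\log^{l-m-2}x}{x^{1+\alpha}}\,dx$ for $m+2\le l\le I$. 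The explicit terms are immediately $\ll\log^{-(m+1)}X$, so the whole assertion reduces to the integral terms.

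For the integral terms I would replace $P_l(\cdots)$ by $O(1)$ and then observe that, since $1\le x\le X/u\le X$ and $|\alpha|\ll 1/\log X$, we have $|\alpha\log x|\le|\alpha|\log X\ll 1$, hence $x^{-\alpha}=e^{-\alpha\log x}\asymp 1$ uniformly over the range of integration. Therefore
\[
\int_1^{X/u}\frac{\log^{l-m-2}x}{x^{1+\alpha}}\,dx \;\ll\; \int_1^{X/u}\frac{\log^{l-m-2}x}{x}\,dx \;=\;\frac{(\log(X/u))^{\,l-m-1}}{l-m-1}\;\ll\;\log^{l-m-1}X,
\]
so the $l$-th integral term is $\ll \dfrac{\log^{l-m-1}X}{\log^l X}=\log^{-(m+1)}X$. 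Since $l$ runs over at most $I$ values and $I$ is an absolute constant, summing preserves the bound, and adding the contribution of the explicit terms gives $G_m(\alpha,u)\ll\log^{-(m+1)}X$ uniformly for $u\le X$, $0\le m\le I$, $\alpha\ll 1/\log X$. The boundary cases $m=0,1$ and $m=I-1,I$ are handled in exactly the same way directly from their displayed formulas (for $m=0$ the lone term $\alpha P_1(\cdots)$ is $\ll 1/\log X=\log^{-(0+1)}X$, etc.).

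There is essentially no genuine obstacle, only bookkeeping: the one point that deserves a line of care is the uniform estimate $x^{-\alpha}\asymp 1$ on the full interval $1\le x\le X/u$, and the associated edge case $l=m+2$, where one checks $\int_1^{X/u}x^{-1-\alpha}\,dx=\frac{1-(X/u)^{-\alpha}}{\alpha}\ll\log(X/u)$ continues to hold no matter how small $\alpha$ is. Beyond that, the verification is simply a matter of tracking which power of $\log X$ each summand carries. This is (4.16) of \cite{F} with $j$ renamed $u$, and the argument is identical to the one given there.
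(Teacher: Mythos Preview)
Your argument is correct. The paper does not supply its own proof of this lemma; it simply records the estimate and cites \cite{F}, (4.16). Your write-up is exactly the direct verification from the explicit formulas in Lemma~3.2 that underlies Feng's (4.16), so there is nothing to compare beyond noting that you have filled in what the paper leaves as a citation.
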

By this lemma we also have 
\begin{equation}
\label{Gdash}
\frac{\partial}{\partial u}G_{m}(\alpha -it, u) \ll \frac{1}{u \log ^{m+2}X}.
\end{equation}
In our situation we may assume that $\alpha -it, \beta +it \ll 1/\log X$. We use the above estimates by replacing $\alpha$, $\beta$ with $\alpha -it$, $\beta +it$, respectively. We need to compute the contribution of the main terms of (\ref{33}).

\begin{lem}
For $|\alpha -it|, |\beta +it| \ll 1/ \log X$, we have
\begin{equation}
\label{38}
\begin{aligned}
& \sum _{\underset{(j,q)=1}{j \leq X}}\frac{\mu ^{2}(j)F(j, 1+\alpha +\beta )}{jF(j, 1+\alpha -it)F(j, 1+\beta +it)}G_{0}(\alpha -it, j)G_{0}(\beta +it, j) \\
& \quad =\frac{F(q,1)}{\log X} \int _{0}^{1}V_{0}(\alpha -it, u)V_{0}(\beta +it ,u) du +O\left( \frac{\log (\log q \log X)}{\log ^{2} X} \right).
\end{aligned}
\end{equation}
Here, $F(q,1)=\prod _{p|q}(1-1/p)$ and 
\begin{equation}
\label{V0}
\begin{aligned}
V_{0}(\alpha ,u)&=V_{0}(\alpha ,u ;X) \\
&=\alpha \log XP_{1}(1-u)+P_{1}^{'}(1-u)+\sum _{l=2}^{I}\frac{(-1)^{l}}{(l-2)!}\int _{0}^{1-u}P_{l}(1-u-\mu )\mu ^{l-2}e^{-\alpha \mu \log X} d\mu .
\end{aligned}
\end{equation}
\end{lem}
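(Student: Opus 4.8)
The plan is to convert the sum over $j$ into an integral in three moves: first replace $G_0$ by the continuous function $V_0$ of (\ref{V0}); then strip the arithmetic factors $F(j,\cdot)$ down to their values at argument $1$; and finally evaluate the resulting Dirichlet-type sum by partial summation. For the first move, in the integral $\int_1^{X/j}P_l\bigl(\tfrac{\log X/jx}{\log X}\bigr)\tfrac{\log^{l-2}x}{x^{1+\alpha}}\,dx$ appearing in the definition of $G_0(\alpha,j)$ I would substitute $x=e^{\mu\log X}$, so that $\log x=\mu\log X$, $dx=(\log X)e^{\mu\log X}\,d\mu$, the range $1\le x\le X/j$ becomes $0\le\mu\le 1-u$ with $u:=\log j/\log X$, and $\tfrac{\log X/jx}{\log X}=1-u-\mu$. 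A direct computation then gives $G_0(\alpha,j)=\tfrac1{\log X}V_0(\alpha,u;X)$ for every $j\le X$ and every $\alpha\ll1/\log X$. Consequently the left side of (\ref{38}) equals
\[
\frac{1}{\log^2X}\sum_{\substack{j\le X\\(j,q)=1}}\frac{\mu^2(j)\,F(j,1+\alpha+\beta)}{j\,F(j,1+\alpha-it)\,F(j,1+\beta+it)}\,\Phi\Bigl(\frac{\log j}{\log X}\Bigr),\qquad \Phi(u):=V_0(\alpha-it,u)V_0(\beta+it,u),
\]
and since $\alpha-it,\beta+it\ll1/\log X$ the quantities $\alpha\log X$, $\beta\log X$ are $O(1)$, so $\Phi$ and $\Phi'$ are $O(1)$ uniformly on $[0,1]$.

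Next I would simplify the arithmetic weight. For squarefree $j$ one has $F(j,1)=\prod_{p\mid j}(1-1/p)=\varphi(j)/j$, hence $\mu^2(j)/(jF(j,1))=\mu^2(j)/\varphi(j)$. For $|z|\ll1/\log X$ and $p\mid j$ with $j\le X$ the estimate $1-p^{-z}\ll|z|\log p\ll(\log p)/\log X$ gives $F(j,1+z)/F(j,1)=\prod_{p\mid j}\bigl(1+\tfrac{1-p^{-z}}{p-1}\bigr)=1+O\bigl(\tfrac1{\log X}\sum_{p\mid j}\tfrac{\log p}{p}\bigr)=1+O\bigl(\tfrac{\log\log j}{\log X}\bigr)$, the last step by the bound $\sum_{p\mid j}(\log p)/p=O(\log\log j)$ for squarefree $j$. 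Applying this with $z=\alpha+\beta,\ \alpha-it,\ \beta+it$ replaces the weight above by $\tfrac{\mu^2(j)}{\varphi(j)}\bigl(1+O(\tfrac{\log\log j}{\log X})\bigr)$; since $|\Phi|\ll1$ and $\sum_{j\le X}\tfrac{\mu^2(j)}{\varphi(j)}\log\log j\ll(\log\log X)(\log X)$, the error so introduced contributes $O(\log\log X)$ to the sum over $j$.

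It then remains to evaluate $\sum_{j\le X,\,(j,q)=1}\tfrac{\mu^2(j)}{\varphi(j)}\Phi(\log j/\log X)$. Writing $\tfrac{\mu^2(n)}{\varphi(n)}=\tfrac{\mu^2(n)}{n}\prod_{p\mid n}\bigl(1+\tfrac1{p-1}\bigr)$ and applying the asymptotic for $\sum_{n\le x,\,(n,N)=1}\tfrac{\mu^2(n)}{n}\prod_{p\mid n}(1+f(p))$ with $N=q$ and $f(p)=1/(p-1)$, the auxiliary Euler product collapses, $\prod_{(p,q)=1}\bigl(1-\tfrac1{p^2}\bigr)\bigl(1+\tfrac{1/(p-1)}{p+1}\bigr)=\prod_{(p,q)=1}\bigl(1-\tfrac1{p^2}\bigr)\tfrac{p^2}{p^2-1}=1$, so that $A(x):=\sum_{n\le x,\,(n,q)=1}\tfrac{\mu^2(n)}{\varphi(n)}=F(q,1)\log x+O(\log\log(q+1))$ uniformly in $x$. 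Abel summation of $\Phi(\log x/\log X)$ against $dA(x)$, followed by the substitution $u=\log x/\log X$ and one integration by parts, converts the $F(q,1)\log x$ part of $A$ into $F(q,1)(\log X)\int_0^1\Phi(u)\,du$, while the remainder contributes $O(\log\log q)$; together with the error from the previous paragraph and the factor $\log^{-2}X$ this yields the right side of (\ref{38}), since $\log\log q+\log\log X\ll\log(\log q\log X)$.

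The only genuinely delicate part is the bookkeeping that forces both error contributions into $O\bigl(\log(\log q\log X)/\log^2X\bigr)$: one has to check that the perturbation $F(j,1+z)/F(j,1)$ is really of size $\log\log j/\log X$ — this is where the bound $\sum_{p\mid j}(\log p)/p=O(\log\log j)$ enters — and that the arithmetic sum $\sum_{(n,q)=1}\mu^2(n)/\varphi(n)$ has the clean leading constant $F(q,1)$, i.e.\ that the compensating Euler product is identically $1$. The change of variables producing $V_0$ and the ensuing Abel summation are otherwise routine.
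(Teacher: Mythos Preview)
Your proof is correct and uses the same ingredients as the paper (Lemma~3.4, Lemma~3.5, Abel summation, and the change of variables turning $G_0$ into $V_0/\log X$), but you apply them in a different order. The paper keeps the full ratio $F(j,1+\alpha+\beta)/(F(j,1+\alpha-it)F(j,1+\beta+it))$ intact, writes it as $\prod_{p\mid j}(1+f(p))$ with the explicit $f(p)$ of~(\ref{fp}), applies Lemma~3.5 with this $f$, carries out the Abel summation against $G_0G_0$, and only at the end observes that the resulting Euler product $Y_q(\alpha,\beta;t)$ equals $F(q,1)+O(1/\log X)$; the relation $G_0=V_0/\log X$ is then quoted from Feng. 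You instead front-load the simplifications: you first verify $G_0=V_0/\log X$ directly, then strip each $F(j,1+z)$ to $F(j,1)$ via Lemma~3.4 \emph{before} invoking Lemma~3.5, so that Lemma~3.5 is applied with the simpler choice $f(p)=1/(p-1)$ and the Euler product collapses to $1$ on the spot.

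Your route is arguably cleaner for this lemma in isolation: the Euler-product bookkeeping is trivial and the constant $F(q,1)$ falls out immediately. The paper's route has the advantage that the object $Y_q(\alpha,\beta;t)$ and the formula~(\ref{Y}) are reusable verbatim in the proofs of Lemmas~3.10 and~3.11, where the same weight reappears inside more complicated sums over primes; keeping $f(p)$ in its general form there avoids having to redo the $F(j,1+z)\to F(j,1)$ replacement at each stage.
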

\begin{proof}
First, 
\begin{align*}
\frac{F(j, 1+\alpha +\beta )}{F(j, 1+\alpha -it)F(j, 1+\beta +it)} &=\prod _{p|j}\frac{1-\frac{1}{p^{1+\alpha +\beta }}}{\left( 1-\frac{1}{p^{1+\alpha -it}} \right) \left( 1-\frac{1}{p^{1+\beta +it }} \right)} \\
&= \prod _{p|j}\left( 1+\frac{p^{1+\alpha -it}+p^{1+\beta +it}-1-p}{(p^{1+\alpha -it}-1)(p^{1+\beta +it}-1)} \right).
\end{align*}
Hence we adapt Lemma 3.5 with 
\begin{equation}
\label{fp}
f(p)=\frac{p^{1+\alpha -it}+p^{1+\beta +it}-1-p}{(p^{1+\alpha -it}-1)(p^{1+\beta +it}-1)}.
\end{equation}
By Lemma 3.5, it follows that 
\[
\sum _{\underset{(j, q)=1}{j \leq u}}\frac{\mu ^{2}(j)F(j, 1+\alpha +\beta )}{jF(j, 1+\alpha -it)F(j, 1+\beta +it)}=\prod _{p|q}\left(1-\frac{1}{p} \right)\prod _{p\nmid q} \left(1-\frac{1}{p^{2}} \right)\left(1+\frac{f(p)}{p+1} \right)\log u +E_{q}(u),
\]
where $E_{q}(u)\ll \log \log q$ uniformly for $u$. By abelian summation, Lemma 3.5 and estimates (\ref{Ges}), (\ref{Gdash}), we have
\begin{equation}
\label{35}
\begin{aligned}
& \sum _{\underset{(j,q)=1}{j \leq X}}\frac{\mu ^{2}(j)F(j, 1+\alpha +\beta )}{jF(j, 1+\alpha -it)F(j, 1+\beta +it)}G_{0}(\alpha -it, j)G_{0}(\beta +it, j)  \\ 
& \quad =\int _{1}^{X}G_{0}(\alpha -it, u)G_{0}(\beta +it ,u)  d \left( \prod _{p|q}\left(1-\frac{1}{p} \right)\prod _{p\nmid q} \left(1-\frac{1}{p^{2}} \right)\left(1+\frac{f(p)}{p+1} \right)\log u +E_{q}(u)  \right) \\
& \quad = \prod _{p|q}\left(1-\frac{1}{p} \right)\prod _{p\nmid q} \left(1-\frac{1}{p^{2}} \right)\left(1+\frac{f(p)}{p+1} \right) \int _{1}^{X} \frac{G_{0}(\alpha -it, u)G_{0}(\beta +it, u)}{u} du \\
& \quad \quad  +O\left( \frac{\log \log q}{\log ^{2}X} \right).
\end{aligned}
\end{equation}
Put 
\[
Y_{q}(\alpha ,\beta ;t)= \prod _{p|q}\left(1-\frac{1}{p} \right)\prod _{p\nmid q} \left(1-\frac{1}{p^{2}} \right)\left(1+\frac{f(p)}{p+1} \right).
\]
Then 
\begin{equation}
\label{Y}
\begin{aligned}
Y_{q}(\alpha ,\beta ;t)&=Y_{q}(it, -it, t)+O(|\alpha -it|) +O(|\beta +it|) +O\left( \frac{1}{\log X} \right) \\
&= F(q,1)+O\left( \frac{1}{\log X} \right)
\end{aligned}
\end{equation}
uniformly for $|\alpha -it|, |\beta +it| \ll 1/ \log X$. Hence (\ref{35}) equals 
\begin{equation}
\label{36}
F(q,1)\int _{1}^{X} \frac{G_{0}(\alpha -it, u)G_{0}(\beta +it, u)}{u} du +O\left( \frac{\log (\log q \log X)}{\log ^{2}X} \right).
\end{equation}
The integral in (\ref{36}) is computed by \cite{F}, which is obtained by multiplying $\log ^{-1} X$ by the integral of (\ref{38}). 
\end{proof}

\begin{lem}
For $m=1, \ldots ,I$, we have 
\begin{equation}
\label{48}
\begin{aligned}
& \sum _{\underset{(j,q)=1}{j \leq X}}\frac{\mu ^{2}(j)F(j, 1+\alpha +\beta )}{jF(j, 1+\alpha -it)F(j, 1+\beta +it)}G_{0}(\alpha -it, j)G_{m}(\beta +it, j)\sum _{p_{1}\cdots p_{m}|j}\log p_{1}\cdots \log p_{m} \\
& \quad =\frac{F(q, 1)}{\log X}\int _{0}^{1}\frac{V_{0}(\alpha -it, u)V_{m}(\beta +it, u)u^{m}}{m!} du +O\left( \frac{\log (\log q \log X)}{\log ^{2}X} \right)
\end{aligned}
\end{equation}
uniformly for $|\alpha -it|, |\beta +it| \ll 1/\log X$. Here, $V_{0}(\alpha, u)$ is defined by (\ref{V0}), and 
\begin{equation}
\label{V1}
\begin{aligned}
V_{1}(\alpha ,u)&=V_{1}(\alpha ,u ;X) \\
&=-2P_{2}(1-u)+\sum _{l=3}^{I}\binom{l}{1}\frac{(-1)^{l-1}}{(l-3)!}\int _{0}^{1-u}P_{l}(1-u-\mu )\mu ^{l-3}e^{-\alpha \mu \log X} d\mu ,
\end{aligned}
\end{equation}
\begin{equation}
\label{Vm}
\begin{aligned}
V_{m}(\alpha ,u)&=V_{m}(\alpha ,u ;X) \\
&=\alpha \log X P_{m}(1-u)+P_{m}^{'}(1-u)-\binom{m+1}{m}P_{m+1}(1-u) \\
& \quad +\sum _{l=m+2}^{I}\binom{l}{m}\frac{(-1)^{l-m}}{(l-m-2)!}\int _{0}^{1-u}P_{l}(1-u-\mu )\mu ^{l-m-2}e^{-\alpha \mu \log X} d\mu \quad (2\leq m \leq I-2) ,
\end{aligned}
\end{equation}
\begin{equation}
\label{VI-1}
\begin{aligned}
V_{I-1}(\alpha ,u)&=V_{I-1}(\alpha ,u ;X) \\
&=\alpha \log XP_{I-1}(1-u)+P_{I-1}^{'}(1-u)-IP_{I}(1-u),
\end{aligned}
\end{equation}
\begin{equation}
\label{VI}
\begin{aligned}
V_{I}(\alpha ,u)&=V_{I}(\alpha ,u; X) =\alpha \log X P_{I}(1-u)+P_{I}^{'}(1-u)
\end{aligned}
\end{equation}
and $F(q,1)=\prod _{p|q}(1-1/p)$.
\end{lem}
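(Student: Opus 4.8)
The plan is to reduce the sum over $j$ in \eqref{48} to an integral via partial summation, exactly as in the proof of Lemma 3.7 (the $m=0$ case), but now carrying along the extra arithmetic weight $\sum_{p_1\cdots p_m\mid j}\log p_1\cdots\log p_m$ attached to $G_m$. First I would combine the product
\[
\frac{\mu^2(j)F(j,1+\alpha+\beta)}{jF(j,1+\alpha-it)F(j,1+\beta+it)}\sum_{p_1\cdots p_m\mid j}\log p_1\cdots\log p_m
\]
into a single multiplicative object, and then apply Lemma 3.5 with the same $f(p)$ as in \eqref{fp} but with the factor $(1+f(p))$ replaced by a modification that incorporates the $\log p$ weights on the $m$ chosen primes. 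Concretely, $\sum_{p_1\cdots p_m\mid j}\log p_1\cdots\log p_m$ is an $m$-fold divisor-type sum over the prime factors of $j$, so the associated Dirichlet series factors as an elementary symmetric expression in $\{\log p/(p-\cdots)\}$; summing against $\mu^2(j)/j$ times the $F$-ratio, Lemma 3.5 (or repeated application of it together with Lemma 3.6 to control $\sum_{p\mid j}(\log p)/p = O(\log\log j)$) yields a main term of the shape $c_m\log X + O((\log\log q)(\log\log X)^{?})$ for an explicit constant $c_m$; crucially the leading constant degenerates at $\alpha=it$, $\beta=-it$ to the same $F(q,1)$ as in \eqref{Y}, up to $O(1/\log X)$, because the extra $\log p$ factors are each $O(\log X)\cdot O(1/p)$ and do not disturb the local factor at the degenerate point in the leading order.

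Next I would perform partial summation against $G_0(\alpha-it,u)G_m(\beta+it,u)$, using Lemma 3.8 for the pointwise bounds $G_0,G_m \ll (\log X)^{-1}$, $(\log X)^{-(m+1)}$ and \eqref{Gdash} for their $u$-derivatives, so that the error contribution from the remainder $E_q(u)\ll\log\log q$ in the counting function is $O(\log(\log q\log X)/\log^2 X)$, matching the claimed error term. This leaves the main term
\[
F(q,1)\int_1^X \frac{G_0(\alpha-it,u)\,G_m(\beta+it,u)}{u}\Bigl(\tfrac{\text{weight}}{\text{normalisation}}\Bigr)\,du,
\]
which after the substitution $u = X^{1-v}$ (so $\log(X/u)/\log X = v$ matches the argument of the $P_i$ in the definitions of $G_i$) becomes $\frac{F(q,1)}{\log X}\int_0^1 V_0(\alpha-it,v)V_m(\beta+it,v)\,(\cdots)\,dv$. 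The factor $u^m/m!$ in \eqref{48} is precisely where the $m$ extra $\log p$ weights land: the sum $\sum_{p_1\cdots p_m\mid j}\log p_1\cdots\log p_m$, when weighted by $\mu^2(j)/j$ and summed up to $u$, grows like $\frac{(\log u)^m}{m!}$ times the same density constant (this is the standard fact that the $m$-th elementary symmetric function of the $\log p$ over $p\mid j$ averages to $(\log u)^m/m!$; it follows by induction from Lemma 3.6 and the prime number theorem, or directly from Lemma 3.7 of \cite{F}), which under $u=X^{1-v}$ turns into $\frac{(1-v)^m\log^m X}{m!}$ — but $G_m$ already carries $(\log X)^{-m}$, so the net power of $\log X$ is $(\log X)^{-1}$ as required, and the polynomial factor is $u^m/m! = (1-v)^m\log^m X/m!$ after renaming; the statement absorbs the $\log^m X$ into the normalisation of $V_m$ versus $G_m$.

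I expect the main obstacle to be bookkeeping the degeneration of the local factors: one must check that inserting the $m$ copies of $\log p$ does not spoil the estimate \eqref{Y}, i.e.\ that at $\alpha=it,\ \beta=-it$ the combined multiplicative constant still collapses to $F(q,1)(1+O(1/\log X))$ rather than picking up a spurious factor depending on $q$. This works because each $\log p$ contributes a factor that is $O((\log X)/p)$ on the support, so the Euler product over $p\mid q$ is perturbed by $O(\sum_{p\mid q}(\log p)/p) = O(\log\log q)$ in a way that, after dividing by the normalising $\log X$, is absorbed into the stated error $O(\log(\log q\log X)/\log^2 X)$; the Euler product over $p\nmid q$ is handled as in Lemma 3.7. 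The secondary technical point is justifying that the contribution of the $O$-terms in \eqref{ch} (the ones with $F_1(jq,1-2\delta)$) summed against these weights remains $o(Q^2)$ after the later integration in \eqref{30}, which follows verbatim from the argument already given for Lemma 3.4 and the remark immediately after its proof. Modulo these points, the identity \eqref{48} follows by assembling the pieces and the substitution $u=X^{1-v}$, exactly paralleling \cite{F}, (4.xx) with the harmless extra coprimality condition $(j,q)=1$ handled by the $F\mapsto F(\cdot\, q,\cdot)$ substitution as explained before Lemma 3.3.
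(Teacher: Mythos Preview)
Your outline points in the right direction but differs structurally from the paper's proof, and the step where you invoke Lemma~3.5 would not go through as stated. You propose to treat the full weight
\[
\frac{\mu^2(j)F(j,1+\alpha+\beta)}{jF(j,1+\alpha-it)F(j,1+\beta+it)}\sum_{p_1\cdots p_m\mid j}\log p_1\cdots\log p_m
\]
as a single multiplicative object and apply Lemma~3.5 to it. But Lemma~3.5 requires $f(p)=O(p^{-c})$, and the $\log p$ factors you want to absorb are unbounded; there is no modification of $(1+f(p))$ fitting that hypothesis which encodes the elementary symmetric weight $e_m(\log p:p\mid j)$. What your single-Abel-summation route actually needs is an asymptotic of the shape $S(u)\sim Y_q(\alpha,\beta;t)\,(\log u)^{m+1}/(m+1)!$ for the weighted partial sum, with error $O(\log\log q)$, and proving that is essentially the whole lemma.

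The paper obtains this by a different ordering: it first \emph{extracts} the $m$ marked primes, writing $j=p_1\cdots p_m j_0$ (equation \eqref{40}), and applies Lemma~3.5 together with Abel summation only to the inner $j_0$-sum, where the hypothesis $f(p)=O(p^{-c})$ is genuinely satisfied. This yields an integral in a dummy variable $u$ multiplied by an $m$-fold outer sum over primes with weights $\asymp(\log p_r)/p_r$ (equations \eqref{41}--\eqref{41.5}). Each prime-sum is then replaced by an integral via the Prime Number Theorem, the coprimality conditions being removed at cost $O(\log\log q/\log^{m+1}X)$ by Lemma~3.4; after $m$ iterations the resulting $(m{+}1)$-fold iterated integral is collapsed by Lemma~3.6 into the single integral $\int_1^X G_0G_m\,(\log u)^m/(m!\,u)\,du$ of \eqref{46}, which after the change of variable produces the $u^m/m!$ factor you identified. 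In short, the ``weighted asymptotic'' you need for a one-shot Abel summation \emph{is} the content of the paper's pull-out-the-primes-then-iterate argument; your plan reorganises rather than bypasses that work, and the direct appeal to Lemma~3.5 has to be replaced by it. (Minor: the $m=0$ case you cite is Lemma~3.9, and the bound $\sum_{p\mid j}(\log p)/p=O(\log\log j)$ is Lemma~3.4, not~3.6.)
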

\begin{proof}
Changing the order of summation, we have
\begin{equation}
\label{40}
\begin{aligned}
& \sum _{\underset{(j,q)=1}{j \leq X}}\frac{\mu ^{2}(j)F(j, 1+\alpha +\beta )}{jF(j, 1+\alpha -it)F(j, 1+\beta +it)}G_{0}(\alpha -it, j)G_{m}(\beta +it, j)\sum _{p_{1}\cdots p_{m}|j}\log p_{1}\cdots \log p_{m} \\
& \quad =\sum _{\underset{(p_{1}\cdots p_{m}, q)=1}{p _{1}, \ldots ,p_{m}\leq X}}\frac{\mu ^{2}(p_{1}\cdots p_{m})F(p_{1}\cdots p_{m}, 1+\alpha +\beta )\log p_{1}\cdots \log p_{m}}{p_{1}\cdots p_{m}F(p_{1}\cdots p_{m}, 1+\alpha -it)F(p_{1}\cdots p_{m}, 1+\beta +it)} \\
& \quad \times \sum _{\underset{(j_{0}, qp_{1}\cdots p_{m})=1}{j_{0} \leq X/p_{1}\cdots p_{m}}}\frac{\mu ^{2}(j_{0})F(j_{0}, 1+\alpha +\beta )}{j_{0}F(j_{0}, 1+\alpha -it)F(j_{0}, 1+\beta +it)}G_{0}(\alpha -it, p_{1}\cdots p_{m})G_{m}(\beta +it, p_{1}\cdots p_{m}).
\end{aligned}
\end{equation}
By abelian summation and Lemma 3.5, we have
\begin{equation}
\label{40.5}
\begin{aligned}
&  \sum _{\underset{(j_{0}, qp_{1}\cdots p_{m})=1}{j_{0} \leq X/p_{1}\cdots p_{m}}}\frac{\mu ^{2}(j_{0})F(j_{0}, 1+\alpha +\beta )}{j_{0}F(j_{0}, 1+\alpha -it)F(j_{0}, 1+\beta +it)}G_{0}(\alpha -it, p_{1}\cdots p_{m})G_{m}(\beta +it, p_{1}\cdots p_{m}) \\
& \quad =\int _{1}^{X/p_{1}\cdots p_{m}}G_{0}(\alpha -it, up_{1}\cdots p_{m})G_{m}(\beta +it, up_{1}\cdots p_{m}) \\
& \quad \quad \times d \left( \prod _{p|qp_{1}\cdots p_{m}}\left(1-\frac{1}{p} \right)\prod _{(p, qp_{1}\cdots p_{m})=1}\left( 1-\frac{1}{p^{2}} \right)\left(1+\frac{f(p)}{p+1} \right) \log u +E_{qp_{1}\cdots p_{m}}(u) \right),
\end{aligned}
\end{equation}
where $f(p)$ is defined by (\ref{fp}). By  estimates (\ref{Ges}), (\ref{Gdash}), this equals
\begin{equation}
\label{41}
\begin{aligned}
&Y_{q}(\alpha ,\beta ; t)\prod _{r=1}^{m}\left( 1+\frac{1}{p_{r}} \right)^{-1}\left( 1+\frac{f(p_{r})}{p_{r}+1} \right)^{-1}\\
& \quad \quad \times   \int _{1}^{X/p_{1}\cdots p_{m}}\frac{G_{0}(\alpha -it, up_{1}\cdots p_{m})G_{m}(\beta +it, up_{1}\cdots p_{m})}{u} du   +O\left( \frac{\log \log qX}{\log ^{m+2}X} \right).
\end{aligned}
\end{equation}
Substituting (\ref{41}) into (\ref{40}), we have
\begin{equation}
\label{41.5}
\begin{aligned}
& \sum _{\underset{(j,q)=1}{j \leq X}}\frac{\mu ^{2}(j)F(j, 1+\alpha +\beta )}{jF(j, 1+\alpha -it)F(j, 1+\beta +it)}G_{0}(\alpha -it, j)G_{m}(\beta +it, j)\sum _{p_{1}\cdots p_{m}|j}\log p_{1}\cdots \log p_{m} \\
& \quad =Y_{q}(\alpha ,\beta ; t)\sum _{\underset{(p_{1}\cdots p_{m}, q)=1}{p_{1} \cdots p_{m} \leq X}}\mu ^{2}(p_{1}\cdots p_{m})\prod _{r=1}^{m}\frac{(p_{r}^{1+\alpha +\beta}-1)\log p_{r}}{(p_{r}^{1+\alpha -it}-1)(p_{r}^{1+\beta +it}-1)+(p_{r}^{1+\alpha +\beta}-1)} \\
& \quad \quad  \times \int _{1}^{X/p_{1}\cdots p_{m}}\frac{G_{0}(\alpha -it, up_{1}\cdots p_{m})G_{m}(\beta +it, up_{1}\cdots p_{m})}{u} du  \\
& \quad +O\left( \frac{\log \log qX}{\log ^{m+2}X}\sum _{p_{1} \cdots p_{m} \leq X}\prod _{r=1}^{m}\frac{\left| 1 -\frac{1}{p_{r}^{1+\alpha +\beta }} \right|\log p_{r}}{\left| \left(1-\frac{1}{p_{r}^{1+\alpha -it}} \right)\left(1-\frac{1}{p_{r}^{1+\beta +it}} \right) \right| p_{r}} \right).
\end{aligned}
\end{equation}
Denote the right hand side by $H_{1}+O(H_{2})$. Since 
\[
\frac{\left| 1 -\frac{1}{p_{r}^{1+\alpha +\beta }} \right|\log p_{r}}{\left| \left(1-\frac{1}{p_{r}^{1+\alpha -it}} \right)\left(1-\frac{1}{p_{r}^{1+\beta +it}} \right) \right| p_{r}} \ll \frac{\log p_{r}}{p_{r}},
\]
we have
\begin{align*}
\sum _{p_{1}\cdots p_{m} \leq X}\prod _{r=1}^{m}\frac{\left| 1 -\frac{1}{p_{r}^{1+\alpha +\beta }} \right|\log p_{r}}{\left| \left(1-\frac{1}{p_{r}^{1+\alpha -it}} \right)\left(1-\frac{1}{p_{r}^{1+\beta +it}} \right) \right| p_{r}} \ll \sum _{p_{1}, \ldots p_{m}\leq X}\prod _{r=1}^{m}\frac{\log p_{r}}{p_{r}} &\ll \left( \sum _{p\leq X}\frac{\log p}{p} \right)^{m}\\
& \ll \log ^{m}X.
\end{align*}
Hence 
\begin{equation}
\label{42}
H_{2}\ll \frac{\log \log qX}{\log ^{2}X}.
\end{equation}
Next, we compute $H_{1}$. Since 
\begin{equation}
\label{43}
\frac{(p^{1+\alpha +\beta}-1)\log p}{(p^{1+\alpha -it}-1)(p^{1+\beta +it}-1)+p^{1+\alpha +\beta}-1}=\frac{\log p}{p}+O\left( \frac{\log p}{p^{\frac{3}{2}}} \right)
\end{equation}
uniformly for $|\alpha -it|, |\beta +it| \ll 1/\log X$, by Lemma 3.8 we have
\begin{equation}
\label{44}
\begin{aligned}
H_{1}&=Y_{q}(\alpha ,\beta ,t)\sum_{\underset{(p_{1}\cdots p_{m-1}, q)=1}{p_{1}\cdots p_{m-1}\leq X}}\mu ^{2}(p_{1}\cdots p_{m-1})\prod _{r=1}^{m-1}\frac{(p_{r}^{1+\alpha +\beta}-1)\log p_{r}}{(p_{r}^{1+\alpha -it}-1)(p_{r}^{1+\beta +it}-1)+(p_{r}^{1+\alpha +\beta}-1)}  \\
& \quad \times \left\{ \sum _{p_{m}\leq X/p_{1}\cdots p_{m-1}}\frac{\log p_{m}}{p_{m}} \int _{1}^{X/p_{1}\cdots p_{m}}\frac{G_{0}(\alpha -it, up_{1}\cdots p_{m})G_{m}(\beta +it, up_{1}\cdots p_{m})}{u} du  \right. \\
& \left. \quad \quad \quad   +O\left(  \sum _{\underset{p_{m}|qp_{1}\cdots p_{m-1}}{p_{m}\leq X/p_{1}\cdots p_{m-1}}}\frac{(p_{m}^{1+\alpha +\beta}-1)\log p_{m}}{(p_{m}^{1+\alpha -it}-1)(p_{m}^{1+\beta +it}-1)+(p_{m}^{1+\alpha +\beta}-1)} \cdot \frac{1}{\log ^{m+1}X}  \right) \right. \\
& \quad \quad \quad  \left. +O\left( \sum _{p_{m}\leq X/p_{1}\cdots p_{m-1}}\frac{\log p_{m}}{p_{m}^{\frac{3}{2}}} \cdot \frac{1}{\log ^{m+1}X} \right) \right\}  .
\end{aligned}
\end{equation}
By Lemma 3.4 and (\ref{Ges}), (\ref{Gdash}), the contribution of the error terms in (\ref{44}) to $H_{1}$ is at most
\begin{align*}
\sum _{p_{1}, \ldots ,p_{m-1}\leq X}\prod _{r=1}^{m-1}\frac{\log p_{r}}{p_{r}}\left( \sum _{\underset{p_{m}|qp_{1}\cdots p_{m-1}}{p_{m}\leq X}}\frac{\log p_{m}}{p_{m}} +O(1) \right)\frac{1}{\log ^{m+1}X} \ll \frac{\log (\log q \log X)}{\log ^{2}X}.
\end{align*}
Hence
\begin{align*}
H_{1}&=Y_{q}(\alpha ,\beta ,t)\sum_{\underset{(p_{1}\cdots p_{m-1}, q)=1}{p_{1}\cdots p_{m-1}\leq X}}\mu ^{2}(p_{1}\cdots p_{m-1})\prod _{r=1}^{m-1}\frac{(p_{r}^{1+\alpha +\beta}-1)\log p_{r}}{(p_{r}^{1+\alpha -it}-1)(p_{r}^{1+\beta +it}-1)+(p_{r}^{1+\alpha +\beta}-1)}  \\
& \quad \quad  \times \sum _{p_{m}\leq X/p_{1}\cdots p_{m-1}}\frac{\log p_{m}}{p_{m}} \int _{1}^{X/p_{1}\cdots p_{m}}\frac{G_{0}(\alpha -it, up_{1}\cdots p_{m})G_{m}(\beta +it, up_{1}\cdots p_{m})}{u} du \\
& \quad +O\left( \frac{\log (\log q \log X)}{\log ^{2}X} \right).
\end{align*}
By Prime Number Theorem, we have
\begin{align*}
&  \sum _{p_{m}\leq X/p_{1}\cdots p_{m-1}}\frac{\log p_{m}}{p_{m}} \int _{1}^{X/p_{1}\cdots p_{m}}\frac{G_{0}(\alpha -it, up_{1}\cdots p_{m})G_{m}(\beta +it, up_{1}\cdots p_{m})}{u} du \\
& \quad =\int _{1}^{X/ p_{1}\cdots p_{m-1}}\frac{1}{u_{m}}\int _{1}^{X/p_{1}\cdots p_{m-1}u_{m}}\frac{G_{0}(\alpha -it, up_{1}\cdots p_{m-1}u_{m})G_{m}(\beta +it, up_{1}\cdots p_{m-1}u_{m})}{u} du du_{m} \\
& \quad \quad  +O\left( \frac{1}{\log ^{m+1}X} \right)
\end{align*}
(see   (4.36) of \cite{F}).  Hence
\begin{equation}
\label{45}
\begin{aligned}
H_{1}&=Y_{q}(\alpha ,\beta ,t)\sum_{\underset{(p_{1}\cdots p_{m-1}, q)=1}{p_{1}\cdots p_{m-1}\leq X}}\mu ^{2}(p_{1}\cdots p_{m-1})\prod _{r=1}^{m-1}\frac{(p_{r}^{1+\alpha +\beta}-1)\log p_{r}}{(p_{r}^{1+\alpha -it}-1)(p_{r}^{1+\beta +it}-1)+(p_{r}^{1+\alpha +\beta}-1)}  \\
& \quad  \times \int _{1}^{X/ p_{1}\cdots p_{m-1}}\frac{1}{u_{m}}\int _{1}^{X/p_{1}\cdots p_{m-1}u_{m}}\frac{G_{0}(\alpha -it, up_{1}\cdots p_{m-1}u_{m})G_{m}(\beta +it, up_{1}\cdots p_{m-1}u_{m})}{u} du du_{m} \\
& \quad +O\left( \frac{\log (\log q \log X)}{\log ^{2}X} \right).
\end{aligned}
\end{equation}
Applying this argument inductively to the sum over  $p_{m-1}, \ldots ,p_{1}$ in this order and using Lemma 3.6, we have
\begin{equation}
\label{46}
\begin{aligned}
H_{1}&=Y_{q}(\alpha ,\beta ;t)\int _{1}^{X}\frac{1}{u_{1}}\int _{1}^{X/u_{1}}\frac{1}{u_{2}} \cdots \int _{1}^{X/ u_{1}\cdots u_{m-1}}\frac{1}{u_{m}} \\
& \quad \quad  \times  \int _{1}^{X/u_{1}\cdots u_{m}} \frac{G_{0}(\alpha -it, uu_{1}\cdots u_{m}) G_{m}(\beta +it, uu_{1}\cdots u_{m})}{u} du du_{m}\cdots du_{1} \\
& \quad +O\left( \frac{\log (\log q \log X)}{\log ^{2}X} \right) \\
&= Y_{q}(\alpha ,\beta ;t)\int _{1}^{X}\frac{G_{0}(\alpha -it, u)G_{m}(\beta +it, u)\log ^{m}u}{m! \; u} du +O\left( \frac{\log (\log q \log X)}{\log ^{2}X} \right).
\end{aligned}
\end{equation}
It follows from the computation of  p.538 of Feng's paper \cite{F} that the integral in (\ref{46}) is equal to 
\[
\frac{1}{\log X} \int _{0}^{1}\frac{V_{0}(\alpha -it, u)V_{m}(\beta +it, u)u^{m}}{m!} du.
\]
Substituting (\ref{42}), (\ref{46}) into (\ref{41.5}) and replacing $Y_{q}(\alpha ,\beta ;t)$ with $Y_{q}(it, -it; t)=F(q,1)$, we obtain (\ref{48}).      
\end{proof}

\begin{lem}
For $1\leq m_{1}, m_{2} \leq I$, we have
\begin{equation}
\label{55}
\begin{aligned}
& \sum _{\underset{(j,q)=1}{j \leq X}}\frac{\mu ^{2}(j)F(j, 1+\alpha +\beta )}{jF(j, 1+\alpha -it)F(j, 1+\beta +it)} \\
& \quad \times G_{m_{1}}(\alpha -it, j) \sum _{p_{1}\cdots p_{m_{1}}|j}\log p_{1}\cdots \log p_{m_{1}} G_{m_{2}}(\beta +it, j) \sum _{p_{1}\cdots p_{m_{2}}|j}\log p_{1}\cdots \log p_{m_{2}}  \\
&\quad =\left( \sum _{k=0}^{\min \{ m_{1}, m_{2} \}}k! \binom{m_{1}}{k}\binom{m_{2}}{k} \right) \frac{F(q, 1)}{\log X}\int _{0}^{1}\frac{V_{m_{1}}(\alpha -it, u)V_{m_{2}}(\beta +it, u)u^{m_{1}+m_{2}}}{(m_{1}+m_{2})!} du \\
& \quad \quad +O\left( \frac{\log (\log q \log X)}{\log ^{2}X} \right)
\end{aligned}
\end{equation}
uniformly for $|\alpha -it|, |\beta +it| \ll 1/\log X$. Here, $F(q, 1)=\prod _{p|q}(1-1/p)$ and $V_{m}(\alpha ,u)$ $(m=1, \ldots ,I)$ are defined by (\ref{V0}), (\ref{V1})-(\ref{VI}).
\end{lem}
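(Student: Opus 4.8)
The plan is to reduce this lemma to the estimate already established in the proof of Lemma~3.10, the only new feature being that two prime sums appear instead of one. First I would invoke Lemma~3.7: since $\mu^{2}(j)\neq 0$ forces $j$ to be square-free, the product of the two prime sums in \eqref{55} becomes
\[
\Bigl(\sum_{p_{1}\cdots p_{m_{1}}\mid j}\log p_{1}\cdots\log p_{m_{1}}\Bigr)\Bigl(\sum_{p_{1}\cdots p_{m_{2}}\mid j}\log p_{1}\cdots\log p_{m_{2}}\Bigr)=\sum_{k=0}^{\min\{m_{1},m_{2}\}}k!\binom{m_{1}}{k}\binom{m_{2}}{k}\,T_{k}(j),
\]
where $T_{k}(j)=\sum_{p_{1}\cdots p_{m_{1}+m_{2}-k}\mid j}\log^{2}p_{1}\cdots\log^{2}p_{k}\,\log p_{k+1}\cdots\log p_{m_{1}+m_{2}-k}$. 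Substituting this into the left side of \eqref{55}, it suffices to prove that, uniformly for $0\le k\le\min\{m_{1},m_{2}\}$,
\[
\sum_{\substack{j\le X\\(j,q)=1}}\frac{\mu^{2}(j)F(j,1+\alpha+\beta)\,T_{k}(j)}{j\,F(j,1+\alpha-it)\,F(j,1+\beta+it)}\,G_{m_{1}}(\alpha-it,j)\,G_{m_{2}}(\beta+it,j)=\frac{F(q,1)}{\log X}\int_{0}^{1}\frac{V_{m_{1}}(\alpha-it,u)\,V_{m_{2}}(\beta+it,u)\,u^{m_{1}+m_{2}}}{(m_{1}+m_{2})!}\,du+O\!\Bigl(\frac{\log(\log q\log X)}{\log^{2}X}\Bigr),
\]
since the right side is independent of $k$ and $\sum_{k}k!\binom{m_{1}}{k}\binom{m_{2}}{k}$ is a sum of $O_{I}(1)$ bounded terms; multiplying by those coefficients and summing over $k$ then recovers \eqref{55}.

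For a fixed $k$ I would handle the left side above exactly as the left side of \eqref{48} was handled in the proof of Lemma~3.10 (see \eqref{40}--\eqref{46}). Writing $j=p_{1}\cdots p_{m_{1}+m_{2}-k}\,j_{0}$ with the $p_{r}$ distinct and $(j_{0},\,q p_{1}\cdots p_{m_{1}+m_{2}-k})=1$, the multiplicative weight splits off a local factor at each $p_{r}$; the inner sum over $j_{0}$ is evaluated by partial summation using Lemma~3.5 (with $f(p)$ as in \eqref{fp}) together with the bounds \eqref{Ges} and \eqref{Gdash} for $G_{m_{1}}$, $G_{m_{2}}$ and their $u$-derivatives; and the resulting error terms --- including the corrections coming from primes dividing $q p_{1}\cdots p_{r-1}$ and the $O(\log p/p^{3/2})$ terms of \eqref{43} --- are absorbed into $O(\log(\log q\log X)/\log^{2}X)$ precisely as in \eqref{41.5}--\eqref{44}, using Lemma~3.4 and the elementary bound $\sum_{p\le X}(\log p)^{a}/p\ll(\log X)^{a}$. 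What is left is $Y_{q}(\alpha,\beta;t)$ times a sum over $p_{1},\dots,p_{m_{1}+m_{2}-k}$ of local factors of the shape \eqref{43} against a nested integral in the auxiliary variables $u,u_{1},\dots,u_{m_{1}+m_{2}-k}$.

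The one genuinely new point is the collapse of this sum into a single integral. Summing over each prime by the Prime Number Theorem turns a sum weighted by $\log p_{r}/p_{r}$ into an integral against $du_{r}/u_{r}$, and a sum weighted by $\log^{2}p_{r}/p_{r}$ into an integral against $(\log u_{r})\,du_{r}/u_{r}$, up to admissible remainders; iterating Lemma~3.6 --- together with its evident variant carrying the extra factors $\log u_{r}$, as in \cite{F} --- then collapses the whole nested integral. Passing to exponential coordinates $u_{r}=e^{v_{r}}$, $u=e^{w}$ and fixing $v_{1}+\dots+v_{m_{1}+m_{2}-k}+w=\log x$, the remaining simplex integral of $v_{1}\cdots v_{k}$ contributes the factor $\frac{(1!)^{k}(0!)^{m_{1}+m_{2}-2k}}{(m_{1}+m_{2})!}(\log x)^{m_{1}+m_{2}}=\frac{(\log x)^{m_{1}+m_{2}}}{(m_{1}+m_{2})!}$, so that for \emph{every} admissible $k$ one is left with
\[
Y_{q}(\alpha,\beta;t)\int_{1}^{X}\frac{G_{m_{1}}(\alpha-it,u)\,G_{m_{2}}(\beta+it,u)\,(\log u)^{m_{1}+m_{2}}}{(m_{1}+m_{2})!\,u}\,du+O\!\Bigl(\frac{\log(\log q\log X)}{\log^{2}X}\Bigr).
\]
Replacing $Y_{q}(\alpha,\beta;t)$ by $Y_{q}(it,-it;t)=F(q,1)$ as in \eqref{Y}, and evaluating the last integral by the substitution carried out on p.\,538 of \cite{F} (exactly as at the end of the proof of Lemma~3.10), identifies this quantity with the right side of the displayed reduction target; summing over $k$ then completes the proof. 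The main obstacle is the bookkeeping in this last step: one must verify that the multinomial constant produced by the nested-integral collapse really is $1/(m_{1}+m_{2})!$ \emph{independently of} $k$ --- this is exactly what lets the combinatorial factor $\sum_{k}k!\binom{m_{1}}{k}\binom{m_{2}}{k}$ factor cleanly out of the final answer --- and that every intermediate error (from partial summation, from the Prime Number Theorem remainders, and from the coprimality constraints on the $p_{r}$) survives at size $O(\log(\log q\log X)/\log^{2}X)$ once the $O_{I}(1)$ values of $k$ have been summed.
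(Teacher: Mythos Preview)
Your proposal is correct and follows essentially the same route as the paper's own proof: invoke Lemma~3.7, fix $k$ and pull out the primes $p_{1},\dots,p_{m_{1}+m_{2}-k}$, evaluate the inner $j_{0}$-sum by partial summation with Lemma~3.5 and the bounds \eqref{Ges}--\eqref{Gdash}, convert the prime sums to integrals via the Prime Number Theorem (the $\log^{2}p_{r}/p_{r}$ weights giving the extra $\log u_{r}$ factors in \eqref{54}), collapse the nested integral by Lemma~3.6 and its variant, and finally replace $Y_{q}$ by $F(q,1)$ and invoke Feng's p.\,538 computation. Your simplex-integral verification that the multinomial constant is $1/(m_{1}+m_{2})!$ independently of $k$ is exactly the point the paper leaves implicit when it says ``By Lemma~3.6, the repeated integral above is rewritten as an integral involving $V_{m}$.''
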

\begin{proof}
Except for the adaption of Lemma 3.7, this lemma can be proved by the same argument as in the proof of Lemma 3.10. Hence we only describe the outline. By Lemma 3.7, the left hand side of (\ref{55}) equals 
\begin{equation} 
\label{49}
\begin{aligned}
& \sum _{k=1}^{\min \{m_{1}, m_{2} \}}k! \binom{m_{1}}{k} \binom{m_{2}}{k}\sum _{\underset{(j,q)=1}{j \leq X}} \frac{\mu ^{2}(j)F(j, 1+\alpha +\beta )}{jF(j, 1+\alpha -it)F(j, 1+\beta +it)}G_{m_{1}}(\alpha -it, j)G_{m_{2}}(\beta +it ,j) \\
& \quad \times \sum _{p_{1} \cdots p_{m_{1}+m_{2}-k|j}} \log ^{2}p_{1} \cdots \log ^{2}p_{k}\log p_{k+1} \cdots \log p_{m_{1}+m_{2}-k}.
\end{aligned}
\end{equation}
Changing the order of summation we have 
\begin{equation}
\label{50}
\begin{aligned}
& \sum _{\underset{(j,q)=1}{j \leq X}} \frac{\mu ^{2}(j)F(j, 1+\alpha +\beta )}{jF(j, 1+\alpha -it)F(j, 1+\beta +it)}G_{m_{1}}(\alpha -it, j)G_{m_{2}}(\beta +it ,j) \\
& \quad \times \sum _{p_{1} \cdots p_{m_{1}+m_{2}-k|j}} \log ^{2}p_{1} \cdots \log ^{2}p_{k}\log p_{k+1} \cdots \log p_{m_{1}+m_{2}-k} \\
& \quad =\sum _{\underset{(p_{1} \cdots p_{m_{1}+m_{2}-k}, q)=1}{p_{1} \cdots p_{m_{1}+m_{2}-k}\leq X}}\log ^{2}p_{1} \cdots \log ^{2}p_{k}\log p_{k+1} \cdots \log p_{m_{1}+m_{2}-k}\;  S(p_{1}, \ldots ,p_{m_{1}+m_{2}-k}),
\end{aligned}
\end{equation}
where 
\begin{align*}
&S(p_{1}, \ldots ,p_{m_{1}+m_{2}-k}) \\
& \quad  =\sum _{\underset{(j, qp_{1}\cdots p_{m_{1}+m_{2}-k})=1}{j \leq X/p_{1}\cdots p_{m_{1}+m_{2}-k}}} \frac{\mu ^{2}(j)}{p_{1}\cdots p_{m_{1}+m_{2}-k}j}    \\
& \quad \quad \quad \quad \quad \times  G_{m_{1}}(\alpha -it, p_{1}\cdots p_{m_{1}+m_{2}-k}j) G_{m_{2}}(\beta +it, p_{1}\cdots p_{m_{1}+m_{2}-k}j)   \\
& \quad \quad \quad \quad \quad   \times \frac{F(p_{1}\cdots p_{m_{1}+m_{2}-k}j, 1+\alpha +\beta ) }{F(p_{1}\cdots p_{m_{1}+m_{2}-k}j, 1+\alpha -it)F(p_{1}\cdots p_{m_{1}+m_{2}-k}j, 1+\beta +it)  }.  \\
\end{align*}
By abelian summation  and Prime Number Theorem combined with Lemmas 3.4, 3.5 and the estimates (\ref{Ges}), (\ref{Gdash}), it follows that 
\begin{equation}
\label{51}
\begin{aligned}
& S(p_{1}, \ldots ,p_{m_{1}+m_{2}-k}) \\
& \quad =Y_{q}(\alpha , \beta ; t)\prod _{r=1}^{m_{1}+m_{2}-k}\left( \frac{1}{p_{r}}+O\left( \frac{1}{p_{r}^{\frac{3}{2}}} \right) \right) \\
& \quad \quad \quad \times \int _{1}^{X/p_{1}\cdots p_{m_{1}+m_{2}-k}}\frac{G_{m_{1}}(\alpha -it, p_{1} \cdots p_{m_{1}+m_{2}-k}u)G_{m_{2}}(\beta +it, p_{1}\cdots p_{m_{1}+m_{2}-k}u)}{u} du \\
& \quad \quad +O\left( \frac{\log \log qX}{\log ^{m_{1}+m_{2}+2}X} \right).
\end{aligned}
\end{equation}
Hence the left hand side of (\ref{50}) equals 
\begin{equation}
\label{52}
\begin{aligned}
& Y_{q}(\alpha ,\beta ;t)\sum _{\underset{(p_{1}\cdots p_{m_{1}+m_{2}-k-1}, q)=1}{p_{1} \cdots p_{m_{1}+m_{2}-k-1}\leq X}}\log ^{2}p_{1}\cdots \log ^{2}p_{k}\log p_{k+1}\cdots \log p_{m_{1}+m_{2}-k-1} \\
& \quad \quad \quad \quad  \times \prod _{r=1}^{m_{1}+m_{2}-k-1}\left( \frac{1}{p_{r}} +O\left( \frac{1}{p_{r}^{\frac{3}{2}}} \right) \right) \; \tilde{S}(p_{1}, \cdots ,p_{m_{1}+m_{2}-k-1}) \\
& \quad +O\left( \frac{\log \log qX}{\log ^{2}X} \right),
\end{aligned}
\end{equation}
where 
\begin{align*}
& \tilde{S}(p_{1}, \cdots ,p_{m_{1}+m_{2}-k-1}) \\
& \quad =\sum _{\underset{(p_{m_{1}+m_{2}-k}, q)=1}{p_{m_{1}+m_{2}-k} \leq X/p_{1}\cdots p_{m_{1}+m_{2}-k-1}}}\log p_{m_{1}+m_{2}-k} \left( \frac{1}{p_{m_{1}+m_{2}-k}}+O\left( p_{m_{1}+m_{2}-k}^{-\frac{3}{2}} \right) \right) \\
& \quad \quad  \times \int _{1}^{X/ p_{1}\cdots p_{m_{1}+m_{2}-k}}\frac{G_{m_{1}}(\alpha -it, p_{1}\cdots p_{m_{1}+m_{2}-k}u)G_{m_{2}}(\beta +it, p_{1}\cdots p_{m_{1}+m_{2}-k}u)}{u} du.
\end{align*}
The contribution of the  error term above to $\tilde{S}$ is $O(1/\log ^{m_{1}+m_{2}+1}X)$.  By Lemma 3.4, the condition $(p_{m_{1}+m_{2}-k}, q)=1$ can be removed with  error term of size $O(\log \log q/ \log ^{m_{1}+m_{2}+1}X)$.  The sum over $p_{m_{1}+m_{2}-k}$ is rewritten as the integral, by using the Prime Number Theorem. Then, a brief computation yields 
\begin{equation}
\label{53}
\begin{aligned}
& \tilde{S}(p_{1}, \cdots ,p_{m_{1}+m_{2}-k-1}) \\
& \quad =\int _{1}^{X/ p_{1}\cdots p_{m_{1}+m_{2}-k-1}}\frac{1}{u_{m_{1}+m_{2}-k}} \\
& \quad \quad \times \int _{1}^{X/p_{1}\cdots p_{m_{1}+m_{2}-k-1}u_{u_{1}+m_{2}-k}} G_{m_{1}}(\alpha -it, p_{1}\cdots p_{m_{1}+m_{2}-k-1}u_{m_{1}+m_{2}-k}u) \\
& \quad \quad \quad  \times G_{m_{2}}(\beta +it, p_{1}\cdots p_{m_{1}+m_{2}-k-1}u_{m_{1}+m_{2}-k}u) \frac{du}{u} du_{m_{1}+m_{2}-k} \\
& \quad \quad +O\left( \frac{\log (\log q \log X)}{\log ^{m_{1}+m_{2}+1}X} \right).
\end{aligned}
\end{equation}
By (\ref{53}),  (\ref{52}) becomes a sum over $p_{m_{1}+m_{2}-k-1}, \ldots ,p_{1}$ multiplied by an integral  in $u$ and $u_{m_{1}+m_{2}-k}$. Repeating this process inductively to the sums over $p_{m_{1}+m_{2}-k-1}, \ldots ,p_{1}$ in this order, we see that (\ref{50}) equals 
\begin{equation}
\label{54}
\begin{aligned}
& Y_{q}(\alpha ,\beta ;  t)\int _{1}^{X} \frac{\log u_{1}}{u_{1}} \cdots \int _{1}^{X/u_{1} \cdots u_{k-1}}\frac{\log u_{k}}{u_{k}} \int _{1}^{X/u_{1}\cdots u_{k}}\frac{1}{u_{k+1}}\cdots \int _{1}^{X/u_{1}\cdots u_{m_{1}+m_{2}-k-1}}\frac{1}{u_{m_{1}+m_{2}-k}} \\
& \quad \times \int _{1}^{X/ u_{1}\cdots u_{m_{1}+m_{2}-k}}G_{m_{1}}(\alpha -it, u_{1}\cdots u_{m_{1}+m_{2}-k}u)G_{m_{2}}(\beta +it, u_{1}\cdots u_{m_{1}+m_{2}-k}u) \frac{du}{u} \\
& \quad \times du_{m_{1}+m_{2}-k}\cdots du_{1} \\
& \quad +O\left( \frac{\log (\log q \log X)}{\log ^{2}X} \right).
\end{aligned}
\end{equation}
By Lemma 3.6, the repeated integral above is rewritten as an integral involving $V_{m}$. By replacing $Y_{q}(\alpha ,\beta ; t)$ with $Y_{q}(it, -it; t)=F(q,1)$, we arrive at the conclusion of the lemma.
\end{proof}
Substituting the asymptotic formulas of Lemmas 3.9-3.11 into (\ref{33}), we obtain the following result. 
\begin{prop}
We have
\begin{equation}
\label{57}
\begin{aligned}
& \sum _{\underset{(hk, q)=1}{h. k \leq X}}\frac{c(h)c(k)(h, k)^{1+\alpha +\beta }}{h^{1+\beta +it}k^{1+\alpha -it}} \\
& \quad =\frac{F(q,1)}{F(q, 1+\alpha -it)F(q, 1+\beta +it)\log X} \left\{ \int _{0}^{1}{\cal F}(\alpha ,\beta ,t, X ;u) du +O\left( \frac{\log (\log q \log X)}{\log X} \right)  \right\} \\
& \quad \quad +E
\end{aligned}
\end{equation}
uniformly for $|\alpha -it|, |\beta +it| \ll 1/\log X$, where $E$ is given by (\ref{34}), 
\begin{equation}
\label{calF}
\begin{aligned}
&{\cal F}(\alpha ,\beta ,t, X ;u) \\
& \quad :=\sum _{m_{1}=0}^{I}\sum _{m_{2}=0}^{I}\sum _{k=0}^{\min \{m_{1}, m_{2} \}}k!\binom{m_{1}}{k} \binom{m_{2}}{k}\frac{V_{m_{1}}(\alpha -it, u)V_{m_{2}}(\beta +it ,u)u^{m_{1}+m_{2}}}{(m_{1}+m_{2})!},
\end{aligned}
\end{equation}
and $V_{m}(\alpha ,u)$ are given by (\ref{V0}), (\ref{V1})-(\ref{VI}). 
\end{prop}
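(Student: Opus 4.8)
The plan is to feed the expansion \eqref{33} into Lemmas 3.9--3.11. The right side of \eqref{33} is already a single sum over squarefree $j\le X$ with $(j,q)=1$, weighted by $\mu^{2}(j)F(j,1+\alpha+\beta)/\big(jF(jq,1+\alpha-it)F(jq,1+\beta+it)\big)$, of the product of the two ``Feng brackets'' $\{G_{0}(\alpha-it,j)+G_{1}(\alpha-it,j)\sum_{p_{1}\mid j}\log p_{1}+\cdots+G_{I}(\alpha-it,j)\sum_{p_{1}\cdots p_{I}\mid j}\log p_{1}\cdots\log p_{I}\}$ and the analogous bracket with $\alpha-it$ replaced by $\beta+it$, together with the remainder $E$ of \eqref{34}. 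First I would use $(j,q)=1$ to factor $F(jq,w)=F(j,w)F(q,w)$, so that $1/\big(F(q,1+\alpha-it)F(q,1+\beta+it)\big)$ comes out of the $j$-sum; what then remains is of exactly the shape occurring on the left of Lemmas 3.9--3.11.

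Next I would multiply out the two brackets, obtaining a double sum over $m_{1},m_{2}\in\{0,1,\dots,I\}$ of
\begin{multline*}
\sum_{\substack{(j,q)=1\\ j\le X}}\frac{\mu^{2}(j)F(j,1+\alpha+\beta)}{jF(j,1+\alpha-it)F(j,1+\beta+it)}\,G_{m_{1}}(\alpha-it,j)\,G_{m_{2}}(\beta+it,j)\\
\times\Big(\sum_{p_{1}\cdots p_{m_{1}}\mid j}\log p_{1}\cdots\log p_{m_{1}}\Big)\Big(\sum_{q_{1}\cdots q_{m_{2}}\mid j}\log q_{1}\cdots\log q_{m_{2}}\Big),
\end{multline*}
with the convention that an empty product of logarithms equals $1$. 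The $(m_{1},m_{2})=(0,0)$ term is evaluated by Lemma 3.9; the terms with $m_{1}=0$, $m_{2}\ge1$ by Lemma 3.10; the terms with $m_{1}\ge1$, $m_{2}=0$ by Lemma 3.10 applied after interchanging $\alpha-it$ and $\beta+it$ (legitimate, since the summand is invariant under this swap, which preserves $(\alpha-it)+(\beta+it)=\alpha+\beta$, and the conclusion transforms accordingly); and the terms with $m_{1},m_{2}\ge1$ by Lemma 3.11. Each evaluation contributes $\tfrac{F(q,1)}{\log X}$ times a $u$-integral, with an error $O\big(\log(\log q\log X)/\log^{2}X\big)$.

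Then I would reassemble. The main term from the $(m_{1},m_{2})$ summand is $\tfrac{F(q,1)}{\log X}$ times $\big(\sum_{k=0}^{\min\{m_{1},m_{2}\}}k!\binom{m_{1}}{k}\binom{m_{2}}{k}\big)\int_{0}^{1}V_{m_{1}}(\alpha-it,u)V_{m_{2}}(\beta+it,u)\,u^{m_{1}+m_{2}}/(m_{1}+m_{2})!\,du$, where for $m_{1}=0$ or $m_{2}=0$ the $k$-sum collapses to its $k=0$ term (value $1$), recovering the shapes in Lemmas 3.9 and 3.10. Summing over all $(m_{1},m_{2})$, the integrands add up to precisely the function ${\cal F}(\alpha,\beta,t,X;u)$ of \eqref{calF}, so the total main term is $\tfrac{F(q,1)}{\log X}\int_{0}^{1}{\cal F}(\alpha,\beta,t,X;u)\,du$. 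There being only $O((I+1)^{2})=O(1)$ pairs $(m_{1},m_{2})$, the errors total $O\big(\log(\log q\log X)/\log^{2}X\big)=\tfrac{1}{\log X}\,O\big(\log(\log q\log X)/\log X\big)$. Restoring the factor $1/\big(F(q,1+\alpha-it)F(q,1+\beta+it)\big)$ extracted at the start, and carrying the remainder $E$ through unchanged, yields \eqref{57}.

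The bulk of the work, rather than a genuine obstacle (Lemmas 3.9--3.11 already do all the analytic labour), is the bookkeeping of the last step: one must verify term by term that the combinatorial coefficient $k!\binom{m_{1}}{k}\binom{m_{2}}{k}$ supplied by Lemma 3.7 together with the weight $u^{m_{1}+m_{2}}/(m_{1}+m_{2})!$ produced by the iterated-integral identity of Lemma 3.6 is exactly what \eqref{calF} prescribes, including the degenerate cases, and that the uniformity in $|\alpha-it|,|\beta+it|\ll1/\log X$ survives the finite summation, with $1/\big(F(q,1+\alpha-it)F(q,1+\beta+it)\big)\ll F(q,1)^{-1}\ll\log\log q$ harmless against the error.
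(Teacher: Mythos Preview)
Your proposal is correct and follows exactly the paper's approach: the paper's entire proof is the single sentence ``Substituting the asymptotic formulas of Lemmas 3.9--3.11 into (\ref{33}), we obtain the following result,'' and you have spelled out precisely that substitution, including the factorisation $F(jq,w)=F(j,w)F(q,w)$ needed to extract the $q$-dependent prefactor and match the left-hand sides of the lemmas. Your bookkeeping of the combinatorial coefficients and the degenerate cases $m_{1}=0$ or $m_{2}=0$ is accurate, and the error handling is consistent with the paper's (admittedly loose) treatment.
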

The contribution of $E$ to the whole is evaluated by Lemma 3.3. Hence we omit to add this repeatedly below. Assume $\alpha +\beta \ll 1/\log X$. By Lemma 3.4, 
\begin{align*}
\zeta _{q}(1+\alpha +\beta )&=\zeta (1+\alpha +\beta )\prod _{p|q}\left( 1-\frac{1}{p} \right) \left( 1+O\left( \frac{\log p}{p \log X} \right) \right) \\
&=\prod _{p|q}\left( 1-\frac{1}{p} \right) \zeta (1+\alpha +\beta ) \left(1+ O\left( \frac{\log \log q}{\log X} \right) \right).
\end{align*}
Similarly, 
\[
\frac{1}{F(q, \gamma )}=\left( \prod _{p|q}\left( 1-\frac{1}{p} \right)^{-1} \right)\left(1+ O\left( \frac{\log \log q}{\log X} \right) \right)
\]
for $\gamma =1+\alpha -it, 1+\beta +it$ with $|\alpha -it|, |\beta +it|\ll 1/\log X$.  It is also convenient to remember that the integral of ${\cal F}$ in (\ref{57}) is $O(1)$. Combining these estimates with (\ref{57}), we have
\begin{align*}
& \zeta _{q}(1+\alpha +\beta )\sum _{\underset{(hk, q)=1}{h. k \leq X}}\frac{c(h)c(k)(h, k)^{1+\alpha +\beta }}{h^{1+\beta +it}k^{1+\alpha -it}} \\
& \quad =\frac{\zeta (1+\alpha +\beta )}{\log X} \left\{ \int _{0}^{1}{\cal F}(\alpha ,\beta ,t, X ;u) du +O\left( \frac{\log (\log q \log X)}{\log X} \right)  \right\}.
\end{align*}
Substituting this into (\ref{32.75}) and using 
\[
\zeta (1+\alpha +\beta )=\frac{1}{\alpha +\beta }+O(1), \quad \zeta (1-\alpha -\beta )=-\frac{1}{\alpha +\beta }+O(1) \quad (\alpha +\beta \ll 1/ \log X),
\]
we have 
\begin{equation}
\label{58}
\begin{aligned}
& \sum _{h, k \leq X}\frac{c(h)c(k)}{\sqrt{hk}}\tilde{\Delta}_{\alpha , \beta }(h, k; Q) \left( \frac{k}{h} \right)^{it} \\
& \quad =\frac{1}{\log X}\sum _{q}W \left( \frac{q}{Q} \right)\varphi ^{*}(q)  \bigl\{  I(\alpha ,\beta ,\sigma ,t, q) +I(-\beta ,-\alpha , \sigma ,t, q) \bigr\} \\
& \quad \quad  +O\left( \frac{Q^{2}\log (\log Q \log X)}{\log X} \right),
\end{aligned}
\end{equation}
where 
\begin{equation}
\label{Ialphabeta}
I(\alpha ,\beta , \sigma , t, q):=\left( \frac{q}{\pi} \right)^{\frac{\alpha +\beta}{2}-\sigma +\frac{1}{2}}\frac{\Gamma \left( \frac{1}{4}+\frac{\alpha}{2} \right)}{\Gamma \left( \frac{\sigma +it}{2} \right)} \frac{\Gamma \left( \frac{1}{4}+\frac{\beta}{2} \right)}{\Gamma \left( \frac{\sigma -it}{2} \right)} \frac{1}{\alpha +\beta} \int _{0}^{1}{\cal F}(\alpha , \beta ,t, X ;u) du.
\end{equation}

%%%%%%%%%%%%%%%%%%%%%%%%%%%%%%%%%%%%%%%%%%%%%%%%%%%%%%%%%%%%%%%%%%%%%%%%%%%%%%%%%%%%%%%%%%%%%%%%%%%%%%%%%%%%%%%%%%%%%%%%%%%%%
%%%%%%%%%%%%%%%%%%%%%%%%%%%%%%%%%%%%%%%%%%%%%%%%%%%%%%%%%%%%%%%%%%%%%%%%%%%%%%%%%%%%%%%%%%%%%%%%%%%%%%%%%%%%%%%%%%%%%%%%%%%%%%%

\section{Completion of  proofs of theorems}
The remaining work is to compute the derivatives by parameters $\alpha$, $\beta$. These parameters are supposed to close to $\sigma -1/2 \pm it$ respectively, where $\sigma =1/2-R/\log QT$. Hence we continue to  assume $\alpha -it, \beta +it \ll 1/ \log X$, where $X=Q^{\theta}$ ($0<\theta <1$).  The following lemma is used to evaluate the contribution of $I(-\beta ,-\alpha ,\sigma ,t, q)$ in (\ref{58}). 
\begin{lem}
Let $R$ be a positive constant and put $\sigma =1/2-R/\log QT$, $\alpha =\sigma -1/2 +it$ for $t \in [T, 2T]$. Then, for non-negative integer $l$ we have
\begin{equation}
\label{62}
\frac{\Gamma  ^{(l)} \left( \frac{1}{4}-\frac{\alpha}{2} \right)}{\Gamma \left( \frac{\sigma -it}{2} \right)}=e^{\frac{R\log T}{\log QT}}(1+o(1))\log ^{l}T.
\end{equation}
\end{lem}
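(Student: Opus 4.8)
The plan is to read off (\ref{62}) from Stirling's formula, in the form of the digamma asymptotic $\psi(w)=\log w+O(1/|w|)$ valid uniformly in any fixed sector $|\arg w|\le \pi-\varepsilon$. First I would substitute $\alpha=\sigma-\tfrac12+it$ and rewrite the two Gamma arguments: the numerator becomes $\Gamma\!\left(\tfrac14-\tfrac\alpha2\right)=\Gamma\!\left(\tfrac12-\tfrac\sigma2-\tfrac{it}2\right)$ while the denominator is $\Gamma\!\left(\tfrac\sigma2-\tfrac{it}2\right)$, so that, with $z:=\tfrac\sigma2-\tfrac{it}2$ and $\delta:=\tfrac12-\sigma=R/\log QT$, the quotient is exactly $\Gamma(z+\delta)/\Gamma(z)$. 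Here $\Re z=\sigma/2=\tfrac14-R/(2\log QT)>0$ for $Q$ large, $|z|\asymp t\asymp T\to\infty$, and $\delta\to 0$, so the digamma asymptotic applies throughout the segment from $z$ to $z+\delta$, and the (harmless) $q$--dependence has disappeared.

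The key step is then
\[
\log\frac{\Gamma(z+\delta)}{\Gamma(z)}=\int_{0}^{\delta}\psi(z+s)\,ds=\int_{0}^{\delta}\Bigl(\log z+O\bigl(s/|z|\bigr)+O\bigl(1/|z|\bigr)\Bigr)\,ds=\delta\log z+O\!\left(\frac{1}{|z|}\right).
\]
Since $z=-\tfrac{it}2+O(1)$ with $t>0$, one has $\log z=\log(t/2)-i\pi/2+O(1/t)$, and for $t\in[T,2T]$, $\log(t/2)=\log T+O(1)$; therefore
\[
\delta\log z=\frac{R\log T}{\log QT}+O\!\left(\frac1{\log QT}\right)=\frac{R\log T}{\log QT}+o(1),
\]
because $\delta=R/\log QT$ and the imaginary contribution $-i\pi R/(2\log QT)$ is $o(1)$. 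Exponentiating (and using $O(1/|z|)=o(1)$) gives the $l=0$ case
$\Gamma\!\left(\tfrac14-\tfrac\alpha2\right)/\Gamma\!\left(\tfrac{\sigma-it}2\right)=e^{R\log T/\log QT}(1+o(1))$.

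To produce the factor $\log^{l}T$ I would account for the $l$-fold differentiation (in the argument of the numerator Gamma) that is carried out when the lemma is applied to $\partial^{\,l+m}_{\alpha^{l}\beta^{m}}I(-\beta,-\alpha,\sigma,t,q)$: using $\Gamma^{(l)}(w)=\Gamma(w)\bigl(\psi(w)^{l}+O(\psi(w)^{l-2}\psi'(w))\bigr)$ together with the same digamma estimate at $w=\tfrac14-\tfrac\alpha2=z+\delta$, namely $\psi(w)=\log z+O(1)=\log T+O(1)$ and $\psi'(w)=O(1)$, one gets $\Gamma^{(l)}(w)=\Gamma(w)(\log T)^{l}(1+o(1))$; combining this with the $l=0$ estimate for $\Gamma(w)/\Gamma(\tfrac{\sigma-it}2)$ yields (\ref{62}).

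I do not expect a genuine obstacle — this is a routine Stirling estimate — but two points need care. First, all estimates must be uniform for $t\in[T,2T]$ (and the bound is then uniform in $q$, which is automatic since $q$ dropped out); this is why the argument is phrased through the integral of $\psi$, so that every $O(\cdot)$ is controlled by $1/|z|\asymp 1/T$ and by $\delta\to 0$ alone. Second, one should not try to ``simplify'' the exponent: $R\log T/\log QT$ stays bounded, $0<R\log T/\log QT<R$, but does not converge, so the content of the lemma is precisely that the multiplicative correction is $e^{o(1)}=1+o(1)$, not that the quotient itself has a limit.
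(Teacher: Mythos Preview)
Your argument is correct and follows essentially the same route as the paper: both factor the quotient as $\dfrac{\Gamma^{(l)}\!\left(\frac{1-\sigma-it}{2}\right)}{\Gamma\!\left(\frac{1-\sigma-it}{2}\right)}\cdot\dfrac{\Gamma\!\left(\frac{1-\sigma-it}{2}\right)}{\Gamma\!\left(\frac{\sigma-it}{2}\right)}$, handle the first factor by the logarithmic-derivative form of Stirling to extract $\log^{l}T$, and handle the second to produce $e^{R\log T/\log QT}$. The only cosmetic difference is that the paper evaluates the second factor by expanding the explicit Stirling form $\log\Gamma(w)\approx (w-\tfrac12)\log w - w$ and taking the difference $\varphi(1-\sigma,t)-\varphi(\sigma,t)$, whereas you obtain the same quantity via the cleaner identity $\log\!\bigl(\Gamma(z+\delta)/\Gamma(z)\bigr)=\int_{0}^{\delta}\psi(z+s)\,ds=\delta\log z+O(1/|z|)$; this is a matter of taste, not a different method.
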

\begin{proof}
We have
\[
\frac{\Gamma ^{(l)} \left( \frac{1}{4}-\frac{\alpha}{2} \right)}{\Gamma \left( \frac{\sigma -it}{2} \right)}=\frac{\Gamma ^{(l)}\left( \frac{1-\sigma -it}{2} \right)}{\Gamma \left( \frac{\sigma -it}{2} \right)}=\frac{\Gamma ^{(l)}\left( \frac{1-\sigma -it}{2} \right)}{\Gamma \left( \frac{1-\sigma -it}{2} \right)}  \frac{\Gamma \left( \frac{1-\sigma -it}{2} \right)}{\Gamma \left( \frac{\sigma -it}{2} \right)}.
\]
By Stirling's estimate for logarithmic derivatives, we have
\[
\frac{\Gamma ^{(l)}\left( \frac{1-\sigma -it}{2} \right)}{\Gamma \left( \frac{1-\sigma -it}{2} \right)} \sim \log ^{l}T.
\]
On the other hand, by Stirling's formula, 
\[
\frac{\Gamma \left( \frac{1-\sigma -it}{2} \right)}{\Gamma \left( \frac{\sigma -it}{2} \right)}=(1+o(1))\exp (\varphi (1-\sigma ,t)-\varphi (\sigma ,t)),
\]
where 
\begin{align*}
\varphi (\sigma ,t)=&\frac{\sigma -it}{2} \left\{ \frac{1}{2}\log \left( \left( \frac{\sigma}{2} \right)^{2}+\left( \frac{t}{2} \right)^{2} \right)+i\arg \left( \frac{\sigma -it}{2} \right) \right\} .
\end{align*}
Using the estimates
\[
\arg \left( \frac{1-\sigma -it}{2} \right), \quad \arg \left( \frac{\sigma -it}{2} \right) =-\frac{\pi}{2} +O\left( \frac{1}{T} \right), 
\]
\[
\log \left( \left( \frac{1-\sigma}{2} \right)^{2}+\left( \frac{t}{2} \right)^{2} \right), \quad  \log \left( \left( \frac{\sigma}{2} \right)^{2}+\left( \frac{t}{2} \right)^{2} \right) =2 \log \frac{t}{2} +O \left( \frac{1}{T^{2}} \right), 
\]
it follows that 
\[
\varphi (1-\sigma ,t)-\varphi (\sigma ,t)=\frac{R\log T}{\log QT}\left( 1+O\left( \frac{1}{\log T} \right) \right).
\]
Combining these estimates we obtain (\ref{62}). 
\end{proof}
Put 
\[
P(x)=\sum _{l} g_{l} x^{l}, \quad Q(x)=P\left( \frac{1}{2}-x \right).
\]
For convenience, put 
\[
\nu (\sigma  ,t)=\sigma -1/2 +it.
\]
Substituting (\ref{58}) into (\ref{30}), we have 
\begin{equation}
\label{71}
\begin{aligned}
& \sum _{q}W \left( \frac{q}{Q} \right)\frac{1}{T} \sum _{\chi (\mathrm{mod} \; q)}^{\quad \quad \flat}\int _{T}^{2T}\left| F(\sigma +it , \chi ) \right|^{2} dt \\
& \quad  \sim \frac{1}{\log X}\sum _{q}W \left( \frac{q}{Q} \right)\varphi ^{*}(q) \\
& \quad \quad \times \frac{1}{T} \int _{T}^{2T}P \left( \frac{1}{\log q}\frac{\partial}{\partial \alpha} \right) P \left( \frac{1}{\log q}\frac{\partial}{\partial \alpha} \right) \left[ I(\alpha ,\beta ,\sigma ,t, q)+ I(-\beta ,-\alpha ,\sigma ,t, q)   \right] _{\tiny \begin{split} &\alpha =\nu (\sigma  ,t) \\ &\beta =\nu (\sigma  ,-t) \end{split} } dt. 
\end{aligned}
\end{equation}
Since
\[
\frac{\partial ^{l}}{\partial \alpha ^{l}}\left( \frac{\Gamma \left( \frac{1}{4}+\frac{\alpha}{2} \right)}{\Gamma \left( \frac{\sigma +it}{2} \right)} \right)_{\alpha =\nu (\sigma ,t)}, \quad \quad  \frac{\partial ^{l}}{\partial \beta ^{l}}\left( \frac{\Gamma \left( \frac{1}{4}-\frac{\beta}{2} \right)}{\Gamma \left( \frac{\sigma -it}{2} \right)} \right)_{\beta =\nu (\sigma ,-t)} \sim \frac{1}{2^{l}}\log ^{l}T,
\]
we have
\begin{equation}
\begin{aligned}
& P \left( \frac{1}{\log q}\frac{\partial}{\partial \alpha} \right) P \left( \frac{1}{\log q}\frac{\partial}{\partial \alpha} \right) I(\alpha ,\beta ,\sigma ,t, q)\;  \vline  _{\; \alpha =\nu (\sigma  ,t), \beta =\nu (\sigma ,  -t) } \\
& \quad  \sim P\left( \frac{\log \frac{qT}{\pi}}{2 \log q} +\frac{1}{\log q} \frac{\partial}{\partial \alpha} \right) P\left( \frac{\log \frac{qT}{\pi}}{2 \log q} +\frac{1}{\log q} \frac{\partial}{\partial \beta} \right)  \left( \frac{1}{\alpha +\beta }\int _{0}^{1} {\cal F}(\alpha ,\beta , t, X; u) du \right)_{\tiny \begin{split} &\alpha =\nu (\sigma  ,t) \\ &\beta =\nu (\sigma  ,-t) \end{split} }.
\end{aligned}
\end{equation}
Hence 
\begin{equation}
\label{lemA}
\begin{aligned}
&  P \left( \frac{1}{\log q}\frac{\partial}{\partial \alpha} \right) P \left( \frac{1}{\log q}\frac{\partial}{\partial \alpha} \right) I(\alpha ,\beta ,\sigma ,t, q)\;  \vline  _{\; \alpha =\nu (\sigma  ,t), \beta =\nu (\sigma  ,-t) } \\
& \quad \sim P\left( \frac{1+\eta }{2 } +\frac{1}{\log q} \frac{\partial}{\partial \alpha} \right) P\left( \frac{1+\eta }{2 } +\frac{1}{\log q} \frac{\partial}{\partial \beta} \right)   \left( \frac{1}{\alpha +\beta }\int _{0}^{1} {\cal F}(\alpha ,\beta , t, X; u) du \right)_{\tiny \begin{split} &\alpha =\nu (\sigma  ,t) \\ &\beta =\nu (\sigma  ,-t) \end{split} }
\end{aligned}
\end{equation}
for $T\asymp Q^{\eta}$, $q \asymp Q$.  In case of  $(\log Q)^{2} \leq T \leq (\log Q )^{A}$, it follows that
\begin{equation}
\label{lemA'}
\begin{aligned}
&  P \left( \frac{1}{\log q}\frac{\partial}{\partial \alpha} \right) P \left( \frac{1}{\log q}\frac{\partial}{\partial \alpha} \right) I(\alpha ,\beta ,\sigma ,t, q)\;  \vline  _{\; \alpha =\nu (\sigma  ,t), \beta =\nu (\sigma  ,-t) } \\
& \quad \sim Q\left(-\frac{1}{\log q} \frac{\partial}{\partial \alpha} \right) Q\left( -\frac{1}{\log q} \frac{\partial}{\partial \beta} \right)   \left( \frac{1}{\alpha +\beta }\int _{0}^{1} {\cal F}(\alpha ,\beta , t, X; u) du \right)_{\tiny \begin{split} &\alpha =\nu (\sigma  ,t) \\ &\beta =\nu (\sigma  ,-t) \end{split} }.
\end{aligned}
\end{equation}
On the other hand, by Lemma 4.1 
\begin{equation}
\begin{aligned}
& P \left( \frac{1}{\log q}\frac{\partial}{\partial \alpha} \right) P \left( \frac{1}{\log q}\frac{\partial}{\partial \alpha} \right) I(-\beta ,-\alpha ,\sigma ,t, q)\;  \vline  _{\; \alpha =\nu (\sigma  ,t), \beta =\nu (\sigma  ,-t) } \\
& \quad \sim e^{\frac{2R \log T}{\log QT}} P\left(1- \frac{\log \frac{qT}{\pi}}{2 \log q} +\frac{1}{\log q} \frac{\partial}{\partial \alpha} \right) P\left(1- \frac{\log \frac{qT}{\pi}}{2 \log q} +\frac{1}{\log q} \frac{\partial}{\partial \beta} \right) \\
& \quad \quad \quad \quad \quad  \quad \quad \quad \quad  \times  \left( \frac{q^{-\alpha -\beta}}{-\alpha -\beta }\int _{0}^{1} {\cal F}(-\beta ,-\alpha , t, X; u) du \right)_{\tiny \begin{split} &\alpha =\nu (\sigma  ,t) \\ &\beta =\nu (\sigma  ,-t) \end{split} }.
\end{aligned}
\end{equation}
Hence 
\begin{equation}
\label{lemB}
\begin{aligned}
&  P \left( \frac{1}{\log q}\frac{\partial}{\partial \alpha} \right) P \left( \frac{1}{\log q}\frac{\partial}{\partial \alpha} \right) I(-\beta ,-\alpha ,\sigma ,t, q)\;  \vline  _{\; \alpha =\nu (\sigma  ,t), \; \beta =\nu (\sigma  ,-t) } \\
& \sim -e^{\frac{2\eta R}{1+\eta}} \\
& \quad \times P\left( \frac{1-\eta }{2 } +\frac{1}{\log q} \frac{\partial}{\partial \alpha} \right) P\left( \frac{1-\eta }{2 } +\frac{1}{\log q} \frac{\partial}{\partial \beta} \right)   \left( \frac{q^{-\alpha -\beta}}{\alpha +\beta }\int _{0}^{1} {\cal F}(-\beta ,-\alpha  , t, X; u) du \right)_{\tiny \begin{split} &\alpha =\nu (\sigma  ,t) \\ &\beta =\nu (\sigma  ,-t) \end{split} }.
\end{aligned}
\end{equation}
for $T\asymp Q^{\eta}$, $q \asymp Q$.  In case of  $(\log Q)^{2} \leq T \leq (\log Q )^{A}$, it follows that
\begin{equation}
\label{lemB'}
\begin{aligned}
&  P \left( \frac{1}{\log q}\frac{\partial}{\partial \alpha} \right) P \left( \frac{1}{\log q}\frac{\partial}{\partial \alpha} \right) I(-\beta ,-\alpha ,\sigma ,t, q)\;  \vline  _{\; \alpha =\nu (\sigma  ,t), \beta =\nu (\sigma  ,-t) } \\
& \sim -e^{\frac{2\eta R}{1+\eta}}  Q\left(-\frac{1}{\log q} \frac{\partial}{\partial \alpha} \right) Q\left( -\frac{1}{\log q} \frac{\partial}{\partial \beta} \right)   \left( \frac{q^{-\alpha -\beta}}{\alpha +\beta }\int _{0}^{1} {\cal F}(-\beta ,-\alpha , t, X; u) du \right)_{\tiny \begin{split} &\alpha =\nu (\sigma  ,t) \\ &\beta =\nu (\sigma  ,-t) \end{split} }.
\end{aligned}
\end{equation}
Hereafter we take the limit $\theta \to 1-0$ and let $X=Q$. It is convenient to keep in mind that 
\[
\frac{\partial ^{m}}{\partial \alpha ^{m}}\frac{\partial ^{n}}{\partial \beta ^{n}} \left( \frac{1}{\alpha +\beta }\int _{0}^{1} {\cal F}(\alpha ,\beta , t, X; u) du \right)_{\tiny \begin{split} &\alpha =\nu (\sigma  ,t) \\ &\beta = \nu (\sigma  ,-t) \end{split} } 
\]
and
\[ \frac{\partial ^{m}}{\partial \alpha ^{m}}\frac{\partial ^{n}}{\partial \beta ^{n}} \left( \frac{q^{-\alpha -\beta}}{\alpha +\beta }\int _{0}^{1} {\cal F}(-\beta ,-\alpha , t, X; u) du \right)_{\tiny \begin{split} &\alpha =\nu (\sigma ,t) \\ &\beta =\nu (\sigma ,-t) \end{split} } 
\]
are independent of $t$ for any $m$, $n$ (see (\ref{calF})) .  Hence it suffices to substitute $\alpha =\beta =\sigma -1/2$. We change parameters by $\alpha =a/ \log X$, $\beta =b/\log X$. Then when $T\asymp Q^{\eta}$,  the condition $\alpha =\beta =\sigma -1/2$ is equivalent to $a=b=-R/(1+\eta)$. Then the right hand side of (\ref{lemA}) equals 
\begin{align*}
& (\log X) P\left( \frac{1+\eta}{2}+\frac{\partial }{\partial a} \right)P\left( \frac{1+\eta}{2}+\frac{\partial }{\partial b} \right)  \left( \frac{1}{a+b}\int _{0}^{1}{\cal F}(a, b; u) du \right) \; \vline _{a=b=-\frac{R}{1+\eta}},
\end{align*}
and the right hand side of (\ref{lemB}) equals 
\begin{align*}
& -e^{\frac{2\eta R}{1+\eta}}(\log X) P\left( \frac{1-\eta}{2}+\frac{\partial }{\partial a} \right)P\left( \frac{1-\eta}{2}+\frac{\partial }{\partial b} \right)  \left( \frac{e^{-a-b}}{a+b}\int _{0}^{1}{\cal F}(-b,-a; u) du \right) \; \vline _{a=b=-\frac{R}{1+\eta}}, 
\end{align*}
where ${\cal F}(a,b;u)={\cal F}(a /\log X, b /\log X , 0, X; u)$.  Substituting these into (\ref{71}), we have the following conclusion.
\begin{prop}
For $T\asymp Q^{\eta}$, we have 
\begin{equation}
\label{74}
\begin{aligned}
& \sum _{q}W\left( \frac{q}{Q} \right)\sum _{\chi (\mathrm{mod}\; q)}^{\quad \quad \flat}\frac{1}{T} \int _{T}^{2T}\left| F(\sigma +it ,\chi) \right|^{2} dt \\
& \quad \sim \psi (Q) \\
& \quad \times \left[  P\left( \frac{1+\eta}{2}+\frac{\partial }{\partial a} \right)P\left( \frac{1+\eta}{2}+\frac{\partial }{\partial b} \right)  \left( \frac{1}{a+b}\int _{0}^{1}{\cal F}(a, b; u) du \right) \; \vline _{a=b=-\frac{R}{1+\eta}} \right. \\
& \left.   \quad \quad -e^{\frac{2\eta R}{1+\eta}} P\left( \frac{1-\eta}{2}+\frac{\partial }{\partial a} \right)P\left( \frac{1-\eta}{2}+\frac{\partial }{\partial b} \right)  \left( \frac{e^{-a-b}}{a+b}\int _{0}^{1}{\cal F}(-b,-a; u) du \right) \; \vline _{a=b=-\frac{R}{1+\eta}} \right]
\end{aligned}
\end{equation}
and in case of $(\log Q)^{2}\leq T \leq (\log Q)^{A}$, we have 
\begin{equation}
\label{75}
\begin{aligned}
& \sum _{q}W\left( \frac{q}{Q} \right)\sum _{\chi (\mathrm{mod}\; q)}^{\quad \quad \flat}\frac{1}{T} \int _{T}^{2T}\left| F(\sigma +it ,\chi) \right|^{2} dt \\
& \quad \sim \psi (Q) Q\left( -\frac{\partial}{\partial a} \right) Q\left( -\frac{\partial}{\partial b} \right) \left( \frac{\int _{0}^{1}{\cal F}(a,b;u)du -e^{-a-b} \int _{0}^{1}{\cal F}(-b,-a;u) du}{a+b} \right)\; \vline _{a=b=-R},
\end{aligned}
\end{equation}
where 
\[
\psi (Q):=\sum_{q}W\left( \frac{q}{Q} \right)\varphi ^{\flat}(q)
\]
and $P(x)=\sum g_{l}x^{l}$ is a real polynomial with $g_{0}=1$, $g_{2n}=0$ $(n\in \mathbb{Z}_{>0})$, $Q(x)=P(1/2-x)$. The function ${\cal F}(a,b;u)$ is defined by
\begin{equation}
\label{Fabu}
\begin{aligned}
& {\cal F}(a,b;u):=\sum _{m_{1}=0}^{I} \sum _{m_{2}=0}^{I}\sum _{k=0}^{\min \{m_{1}, m_{2} \}}k! \binom{m_{1}}{k}\binom{m_{2}}{k}\frac{U_{m_{1}}(a,u)U_{m_{2}}(b,u)u^{m_{1}+m_{2}}}{(m_{1}+m_{2})!},
\end{aligned}
\end{equation}
where
\begin{equation}
\label{U0}
\begin{aligned}
U_{0}(a,u)=&a P_{1}(1-u)+P_{1}^{'}(1-u)  +\sum _{l=2}^{I} \frac{(-1)^{l}}{(l-2)!}\int _{0}^{1-u}P_{l}(1-u-\mu )\mu ^{l-2}e^{-au} d\mu ,
\end{aligned}
\end{equation}

\begin{equation}
\label{U1}
\begin{aligned}
U_{1}(a,u)=&-2P_{2}(1-u)+\sum _{l=3}^{I}\binom{l}{1} \frac{(-1)^{l-1}}{(l-3)!}\int _{0}^{1-u}P_{l}(1-u-\mu )\mu ^{l-3}e^{-au} d\mu ,
\end{aligned}
\end{equation}

\begin{equation}
\label{Um}
\begin{aligned}
U_{m}(a,u)=& aP_{m}(1-u)+P_{m}^{'}(1-u)-\binom{m+1}{m}P_{m+1}(1-u)\\
& \quad \quad +\sum _{l=m+2}^{I}\binom{l}{m} \frac{(-1)^{l-m}}{(l-m-2)!}\int _{0}^{1-u}P_{l}(1-u-\mu )\mu ^{l-m-2}e^{-au} d\mu \quad (2\leq m \leq I-2),
\end{aligned}
\end{equation}

\begin{equation}
\label{UI-1}
\begin{aligned}
U_{I-1}(a,u)=&aP_{I-1}(1-u)+P_{I-1}^{'}(1-u)-IP_{I}(1-u),
\end{aligned}
\end{equation}

\begin{equation}
\label{UI}
\begin{aligned}
U_{I}(a,u)=&aP_{I}(1-u)+P_{I}^{'}(1-u).
\end{aligned}
\end{equation}

\end{prop}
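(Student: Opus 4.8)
The analytically substantive work is already behind us: Proposition 3.13 evaluates the arithmetic sum $\sum_{h,k\le X}c(h)c(k)(h,k)^{1+\alpha+\beta}h^{-1-\beta-it}k^{-1-\alpha+it}$ over the mollifier coefficients, and (\ref{30}) records the reduction of the mollified second moment to such sums acted on by $\partial_\alpha^l\partial_\beta^m$. What remains is to assemble these ingredients, carry out the differentiations, and pass to the limit $\theta\to1$. First I would substitute (\ref{58}) into (\ref{30}) to obtain (\ref{71}). The error term $O(Q^{2}\log(\log Q\log X)/\log X)$ in (\ref{58}) and the remainder $E$ of Proposition 3.13 survive the operators $P(\tfrac1{\log q}\partial_\alpha)P(\tfrac1{\log q}\partial_\beta)$ with a loss of at most a power of $\log X$: one writes each derivative as a Cauchy integral over a circle of radius $\asymp1/\log X$ on which $\alpha+\beta\gg1/\log X$, and applies Lemma 3.3 exactly as in the remark following it, so their total contribution is still $o(Q^{2})$. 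In (\ref{71}) the conductor sum carries the weight $\varphi^{*}(q)$, and $\sum_qW(q/Q)\varphi^{*}(q)=\psi(Q)$.

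Next I would evaluate the two building blocks $I(\alpha,\beta,\sigma,t,q)$ and $I(-\beta,-\alpha,\sigma,t,q)$. Expanding $P(\tfrac1{\log q}\partial_\alpha)$ by Leibniz and using the Stirling estimates $\partial_\alpha^l\bigl(\Gamma(\tfrac14+\tfrac\alpha2)/\Gamma(\tfrac{\sigma+it}2)\bigr)\sim 2^{-l}\log^lT$ (and their $\beta$-analogues) absorbs the gamma quotients into the argument of the polynomial, turning $\tfrac1{\log q}\partial_\alpha$ into $\tfrac{\log(qT/\pi)}{2\log q}+\tfrac1{\log q}\partial_\alpha$; for $T\asymp Q^\eta$, $q\asymp Q$ this argument is $\tfrac{1+\eta}{2}+\tfrac1{\log q}\partial_\alpha$, giving (\ref{lemA}), while for $(\log Q)^2\le T\le(\log Q)^A$ it is $\tfrac12+\tfrac1{\log q}\partial_\alpha$, which by $Q(x)=P(1/2-x)$ turns the operator into $Q(-\tfrac1{\log q}\partial_\alpha)$, giving (\ref{lemA'}). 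For $I(-\beta,-\alpha,\sigma,t,q)$ the same expansion together with Lemma 4.1 contributes the factor $e^{2R\log T/\log QT}$, and the substitution $\zeta(1-\alpha-\beta)=-\tfrac1{\alpha+\beta}+O(1)$ (against $\zeta(1+\alpha+\beta)=\tfrac1{\alpha+\beta}+O(1)$ for the first block) produces the overall minus sign; this yields (\ref{lemB}) and (\ref{lemB'}). The key structural observation is that, by the explicit shape (\ref{calF}) of ${\cal F}$, the quantities
\[
\partial_\alpha^m\partial_\beta^n\Bigl(\tfrac1{\alpha+\beta}\int_0^1{\cal F}(\alpha,\beta,t,X;u)\,du\Bigr)\Big|_{\alpha=\nu(\sigma,t),\,\beta=\nu(\sigma,-t)}
\]
and their $q^{-\alpha-\beta}$-weighted counterparts are independent of $t$; hence $\tfrac1T\int_T^{2T}dt$ acts trivially and one may set $\alpha=\beta=\sigma-1/2$.

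Finally I would change variables $\alpha=a/\log X$, $\beta=b/\log X$. Since $\sigma-1/2=-R/\log QT$ and $\log X=\theta\log Q$, the point $\alpha=\beta=\sigma-1/2$ corresponds to $a=b=(\sigma-1/2)\log X\to-R/(1+\eta)$ when $T\asymp Q^\eta$ (resp. $a=b=-R$ when $(\log Q)^2\le T\le(\log Q)^A$), and $q^{-\alpha-\beta}\to e^{-a-b}$ using $\log q\sim\log Q=\log X/\theta$ and $\theta\to1$. Under this scaling the $\log^lT$ factors from the gamma quotients cancel the global $1/\log X$ in (\ref{58}), $P(\tfrac{1\pm\eta}2+\tfrac1{\log q}\partial_\alpha)$ becomes $P(\tfrac{1\pm\eta}2+\partial_a)$, and $V_m(\alpha-it,u;X)$ passes to the $\log X$-free limit $U_m(a,u)$ of (\ref{U0})--(\ref{UI}), so that $\int_0^1{\cal F}(\alpha,\beta,t,X;u)\,du$ becomes $\int_0^1{\cal F}(a,b;u)\,du$. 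Dividing by $\psi(Q)$ and collecting the two blocks gives (\ref{74}) and (\ref{75}).

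The main obstacle is uniformity: one must verify that every $\sim$, every $o(1)$, and every error term generated by Stirling's formula, by Lemma 4.1, and by the expansions $\zeta_q(1+\alpha+\beta)=\prod_{p\mid q}(1-1/p)\,\zeta(1+\alpha+\beta)(1+O(\log\log q/\log X))$ and $F(q,\gamma)^{-1}=\prod_{p\mid q}(1-1/p)^{-1}(1+O(\log\log q/\log X))$ is uniform in $t\in[T,2T]$ and in $q$ in the support of $W(\cdot/Q)$, so that these contributions survive the average over $t$ and the sum over the $\asymp Q$ conductors, each weighted by $\varphi^{*}(q)\ll Q$; and that the arithmetic factors $F(q,1)$ from Lemmas 3.9--3.11 combine with those two expansions to leave exactly $\varphi^{*}(q)$ out front. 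Once this bookkeeping is in place, the remaining manipulations are routine differentiation and the change of variables.
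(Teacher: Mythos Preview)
Your proposal is correct and follows essentially the same route as the paper: substitute (\ref{58}) into (\ref{30}) to get (\ref{71}), absorb the gamma quotients into the argument of $P$ via Stirling (and Lemma 4.1 for the second block), use the $t$-independence of the derivatives of $\tfrac{1}{\alpha+\beta}\int_0^1{\cal F}\,du$ to set $\alpha=\beta=\sigma-1/2$, then rescale $\alpha=a/\log X$, $\beta=b/\log X$ and let $\theta\to1$. Two minor remarks: the proposition you cite as 3.13 is numbered 3.12 in the paper, and the minus sign in front of the second block is already built into (\ref{58}) via $I(-\beta,-\alpha,\sigma,t,q)=\cdots\frac{1}{-\alpha-\beta}\cdots$ rather than appearing at the differentiation stage.
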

We evaluate  the proportion of critical zeros by combining Proposition 2.1 and Proposition 4.2. To evaluate the proportion of simple critical zeros, we choose $R=0.7150$, $I=3$, $P_{1}(x)=-0.144781x+2.33768x^{2}-1.1929x^{3}$, $P_{2}(x)=1.80598x+0.0466787x^{2}$, $P_{3}(x)=-0.332995x$, $Q(x)=1-0.955682x$.  Then
\begin{align*}
& 1-\frac{1}{R}\log \left(Q\left( -\frac{\partial}{\partial a} \right) Q\left( -\frac{\partial}{\partial b} \right) \left( \frac{\int _{0}^{1}{\cal F}(a,b;u)du -e^{-a-b} \int _{0}^{1}{\cal F}(-b,-a;u) du}{a+b} \right)\; \vline _{a=b=-R}  \right) \\
& \quad =0.6044\ldots .
\end{align*}
To evaluate the proportion of critical zeros, we choose $R=0.7721$, $I=3$, $P_{1}(x)=x+0.1560x(1-x)-1.4045x(1-x)^{2}-0.0662x(1-x)^{3}$, $P_{2}(x)=2.0409x+0.2661x^{2}$, $P_{3}(x)=-0.0734x$, $Q(x)=1-0.7721x-0.1901(x^{2}/2-x^{3}/3)-3.9627(x^{3}/3-x^{4}/4+x^{5}/5)$ (these are Feng's choice in \cite{F}). Then 
\begin{align*}
&1-\frac{1}{R}\log \left(Q\left( -\frac{\partial}{\partial a} \right) Q\left( -\frac{\partial}{\partial b} \right) \left( \frac{\int _{0}^{1}{\cal F}(a,b;u)du -e^{-a-b} \int _{0}^{1}{\cal F}(-b,-a;u) du}{a+b} \right)\; \vline _{a=b=-R}  \right) \\\
& \quad =0.6107\ldots .
\end{align*}
These results prove Theorem 1.1. Theorem 1.3 is obtained by choosing 
\begin{align*}
R=&0.7150 +0.632539 \eta -0.142758\eta ^{2}+0.0377946\eta ^{3} \\
&-0.0062075 \eta ^{4}+0.000411417\eta ^{5}, 
\end{align*}
$I=3$, $P(x)=1+r(x-(1+\eta)/2)$, $P_{1}(x)=ax+bx^{2}+(1-a-b)x^{3}$, $P_{2}(x)=cx+dx^{2}$, $P_{3}(x)=ex$ with
\begin{align*}
r=&0.955682 -0.690002\eta +0.344604 \eta ^{2}-0.102225 \eta ^{3} \\
& +0.0158217 \eta ^{4}-0.000975675 \eta ^{5}, 
\end{align*}
\begin{align*}
a=&-0.144781+0.889028\eta -0.410202\eta ^{2}+0.0858293 \eta ^{3} \\
&-0.00602004 \eta ^{4}-0.00007205 \eta ^{5}, 
\end{align*}
\begin{align*}
b=&2.33768-1.65646\eta +0.633289\eta ^{2}-0.068012 \eta ^{3} \\
&-0.0118499\eta ^{4}+0.00206328 \eta ^{5},
\end{align*}
\begin{align*}
c=&1.80598-1.62246\eta +0.854232\eta ^{2}-0.267006\eta ^{3} \\
& +0.0478396\eta ^{4}-0.00363412\eta ^{5},
\end{align*}
\begin{align*}
d=& 0.0466787 +0.245593 \eta -0.31053\eta ^{2} +0.281681 \eta ^{3} \\
& -0.0946981\eta ^{4}+0.00981632\eta ^{5}, 
\end{align*}
\begin{align*}
e=& -0.322995+0.280121\eta +0.452439\eta ^{2} \\
&-0.390328\eta ^{3}+0.103507 \eta ^{4}-0.00919719 \eta ^{5}.
\end{align*}
With these values the function $C(\eta)$ in Theorem 1.3 is given by 
\begin{align*}
& 1-\frac{1}{R}\log \left[P\left( \frac{1+\eta}{2}+\frac{\partial }{\partial a} \right)P\left( \frac{1+\eta}{2}+\frac{\partial }{\partial b} \right)  \left( \frac{1}{a+b}\int _{0}^{1}{\cal F}(a, b; u) du \right) \; \vline _{a=b=-\frac{R}{1+\eta}}  \right. \\
& \left. \quad \quad  -e^{\frac{2\eta R}{1+\eta}} P\left( \frac{1-\eta}{2}+\frac{\partial }{\partial a} \right)P\left( \frac{1-\eta}{2}+\frac{\partial }{\partial b} \right)  \left( \frac{e^{-a-b}}{a+b}\int _{0}^{1}{\cal F}(-b,-a; u) du \right) \; \vline _{a=b=-\frac{R}{1+\eta}} \right].
\end{align*}

%%%%%%%%%%%%%%%%%%%%%%%%%%%%%%%%%%%%%%%%%%%%%%%%%%%%%%%%%%%%%%%%%%%%%%%%%%%%%%%%%%%%%%%%%%%%%%%%%%%%%%%%%%%%%%%%%%%%%%%%%%%%%%%
%%%%%%%%%%%%%%%%%%%%%%%%%%%%%%%%%%%%%%%%%%%%%%%%%%%%%%%%%%%%%%%%%%%%%%%%%%%%%%%%%%%%%%%%%%%%%%%%%%%%%%%%%%%%%%%%%%%%%%%%%%%%%%%

\section{Appendix. The validity of (\ref{31}) with (\ref{32.5}) in a larger domain}
In a paper of Conrey, Iwaniec and Soundararajan \cite{CIS3}, the proof of the asymptotic formula (\ref{31}) with (\ref{32}) started from an approximate functional equation for $\Lambda(1/2+\alpha ,\chi )\Lambda (1/2+\beta ,\overline{\chi})$. For complex numbers $\alpha$, $\beta$ with $\alpha +\beta \neq 0$, define
\begin{equation}
\label{Vtilde}
\tilde{V}_{\alpha ,\beta}(s)=\Gamma \left( \frac{s+1/2+\alpha }{2} \right) \Gamma \left( \frac{s+1/2+\beta }{2} \right) \left( 1-\left( \frac{2s}{\alpha +\beta } \right)^{2} \right).
\end{equation}
For any positive real number $x$, put 
\begin{equation}
\label{V}
V_{\alpha ,\beta}(x)=\frac{1}{2\pi i}\int _{(1)}\tilde{V}_{\alpha ,\beta}(s)x^{-s}\frac{ds}{s}.
\end{equation}
Then,
\begin{lem}[Lemma 1 of \cite{CIS3}, Approximate Functional Equation]
We have
\begin{equation}
\label{AFE}
\Lambda (1/2+\alpha ,\chi )\Lambda (1/2+\beta ,\overline{\chi})=S(\alpha ,\beta ;\chi)+S(-\beta ,-\alpha ;\chi),
\end{equation}
where
\[
S(\alpha ,\beta ;\chi)=\left( \frac{q}{\pi} \right)^{\frac{\alpha +\beta}{2}}\sum _{m,n}\frac{\chi (m)\overline{\chi}(n)}{m^{1/2 +\alpha }n^{1/2 +\beta }}V_{\alpha ,\beta }\left( \frac{\pi mn}{q} \right).
\]
\end{lem}
In the computation of the second moment of (\ref{AFE}), they trancated the Dirichlet series using estimates
\begin{equation}
\label{CISlarge}
V_{\alpha ,\beta}(x)\ll \exp (-\tau x)
\end{equation}
for  $x>1$ and 
\begin{equation}
\label{CISsmall}
V_{\alpha ,\beta}(x)=\Gamma \left(\frac{1}{4}+\frac{\alpha}{2} \right)  \left(\frac{1}{4}+\frac{\beta}{2} \right) +O(x^{\frac{1}{2}-\varepsilon})
\end{equation}
for $0<x\leq 1$, where $\tau$ is some suitable positive constant. The implicit constants above are dependent on $\alpha$, $\beta$, but if the absolute values of $\alpha$ and $\beta$ are small enough, the implicit constants do not interfere with the discussion.  We, however, need to consider the case that $|\Re (\alpha )|, |\Re (\beta )|<1/\log Q$, $(\log Q)^{2}\leq |\Im (\alpha )|, |\Im (\beta)|<(\log Q)^{A}$,  and in this case the size of $\alpha$, $\beta$ might cause some problem in order estimates. Instead of (\ref{CISlarge}) and (\ref{CISsmall}), we use the following hybrid bounds which take both $x$ and $\Im (\alpha) (\sim -\Im (\beta))$ into account. 
\begin{lem}
Suppose that $\alpha ,\beta \in \mathbb{C}$ satisfy
\begin{equation}
\label{A1}
|\alpha -iT|, |\beta +iT|\leq \delta _{1}, \quad |\alpha +\beta |\geq \delta _{2}
\end{equation}
for $T>0$ and $0< \delta _{1},\delta _{2}\leq 1/100$. Then we have
\begin{equation}
\label{A6}
V_{\alpha ,\beta}(x) \ll \left| \Gamma \left( \frac{1/2+\alpha}{2} \right) \Gamma \left( \frac{1/2+\beta}{2} \right) \right| \delta _{2}^{-2}\exp \left(-\frac{x}{2}\right)
\end{equation}
for $x>T$, and 
\begin{equation}
\label{A5} 
V_{\alpha ,\beta}(x)= \Gamma \left( \frac{1/2+\alpha}{2} \right) \Gamma \left( \frac{1/2+\beta}{2} \right)  \left(1+O_{\varepsilon} (\delta _{2}^{-2}T^{3/2+2\delta _{1}+\varepsilon}x^{1/2-\varepsilon}) \right)
\end{equation}
for $0<x \leq 1$ and any $\varepsilon >0$. The implicit constants in (\ref{A6}), (\ref{A5}) are independent of $\alpha ,\beta$ and $\delta _{1}, \delta _{2}$. 
\end{lem}

\begin{proof}
We first move the path of integration of (\ref{V}) to $\Re (s)=B>1$. Write $s=B+it$. Then by Stirling's approximation, we have
\[
 \Gamma \left( \frac{s+1/2+\alpha}{2} \right)\ll e^{-\frac{\pi}{4}|t+T|}(1+|t+T|)^{B/2-1/4+\delta _{1}/2}, 
 \]
 \[  \Gamma \left( \frac{s+1/2+\beta}{2} \right)\ll e^{-\frac{\pi}{4}|t-T|}(1+|t-T|)^{B/2-1/4+\delta _{1}/2}.
\]
Furthermore, 
\[
1-\left(\frac{2s}{\alpha +\beta} \right)^{2}\ll \delta _{2}^{-2}(B+|t|)^{2}, \quad \frac{1}{s}\ll \frac{1}{B+|t|}.
\]
Combining these estimates we have
\[
V_{\alpha ,\beta}(x)\ll x^{-B}\delta _{2}^{-2}\int _{-\infty}^{\infty}e^{-\frac{\pi}{4}(|t+T|+|t-T|)}(1+|t+T|)^{B/2-1/4+\delta _{1}/2}(1+|t-T|)^{B/2-1/4+\delta _{1}/2}(B+|t|)dt.
\]
Due to symmetry, it suffices to estimate the integral over $[0, \infty)$. Hence 
\[
V_{\alpha ,\beta}(x)\ll \delta _{2}^{-2}x^{-B}(I_{1}+I_{2}),
\]
where
\[
I_{1}:=\int _{0}^{T}e^{-\frac{\pi}{4}(|t+T|+|t-T|)}(1+|t+T|)^{B/2-1/4+\delta _{1}/2}(1+|t-T|)^{B/2-1/4+\delta _{1}/2}(B+|t|)dt,
\]
\[
I_{2}:=\int _{T}^{\infty}e^{-\frac{\pi}{4}(|t+T|+|t-T|)}(1+|t+T|)^{B/2-1/4+\delta _{1}/2}(1+|t-T|)^{B/2-1/4+\delta _{1}/2}(B+|t|)dt.
\]
If $0\leq t \leq T$, then $|t+T|+|t-T|=2T$, $1+|t+T|, 1+|t-T|\ll T$. Hence
\[
I_{1}\ll e^{-\frac{\pi}{2}T}T^{B-1/2+\delta _{1}}\int _{0}^{T}(B+|t|)dt \ll  e^{-\frac{\pi}{2}T}T^{B+1/2+\delta _{1}}(B+T).
\]
If $t\geq T$, then $|t+T|+|t-T|=2t$, $1+|t+T|, 1+|t-T|\leq 1+2t$. Hence 
\[
I_{2}\ll \int _{T}^{\infty}e^{-\frac{\pi}{2}t}(1+2t)^{B-1/2+\delta _{1}}(B+t) dt \ll e^{-\frac{\pi}{2}T}T^{B-1/2+\delta _{1}}(B+T).
\]
Therefore,
\begin{equation}
\label{A2}
V_{\alpha ,\beta}(x)\ll x^{-B}\delta _{2}^{-2}e^{-\frac{\pi}{2}T}T^{B+1/2+\delta _{1}}(B+T).
\end{equation}
On the other hand,
\begin{equation}
\label{A3}
 \Gamma \left( \frac{1/2+\alpha}{2} \right) \Gamma \left( \frac{1/2+\beta}{2} \right) \gg \left| \Gamma \left( \frac{1/2-\delta _{1}+iT}{2} \right) \Gamma \left( \frac{1/2-\delta _{1}-iT}{2} \right) \right| \gg e^{-\frac{\pi}{2}T}T^{-1/2-\delta _{1}}.
\end{equation}
Combining (\ref{A2}) and (\ref{A3}), we have
\begin{equation}
\label{A4}
V_{\alpha ,\beta}(x)\ll \left|   \Gamma \left( \frac{1/2+\alpha}{2} \right) \Gamma \left( \frac{1/2+\beta}{2} \right)  \right| \delta _{2}^{-2}T^{B+1+2\delta _{1}}(B+T)x^{-B}
\end{equation}
for any $B>1$,  $x>1$. Notice that the implicit constant above is independent of $B$. Suppose $x>T$. By choosing $B=x/\log \frac{x}{T}$ in (\ref{A4}). we have 
\[
V_{\alpha ,\beta}(x)\ll \left|   \Gamma \left( \frac{1/2+\alpha}{2} \right) \Gamma \left( \frac{1/2+\beta}{2} \right)  \right| \delta _{2}^{-2} x^{3+\delta _{1}}\exp (-x).
\]
Hence we obtain (\ref{A6}) for $x>T$. Next, suppose $0<x \leq 1$. We move the path of integration  to $\Re (s)=-1/2+\varepsilon$. Then we cross a single pole at $s=0$ with residue  $\Gamma ((1/2+\alpha)/2)\Gamma ((1/2+\beta)/2)$. Therefore,
\[
V_{\alpha ,\beta}(x)= \Gamma \left( \frac{1/2+\alpha}{2} \right) \Gamma \left( \frac{1/2+\beta}{2} \right) +\frac{1}{2\pi i}\int _{(-\frac{1}{2}+\varepsilon )}\tilde{V}_{\alpha ,\beta}(s)x^{-s}\frac{ds}{s}.
\]
The integral above is bounded by the right hand side of (\ref{A4}) with $B=-1/2+\varepsilon$. Thus we obtain (\ref{A5}).
\end{proof}
Let $Q>1$ be sufficiently large and choose $\delta _{1}=\delta _{2}=1/\log Q$. If $T$ satisfies $(\log Q)^{2}\leq T<(\log Q)^{A}$ for some $A>2$, then by (\ref{A6}), we have 
\begin{equation}
\label{A7}
\begin{aligned}
V_{\alpha ,\beta}(x) &\ll \left|   \Gamma \left( \frac{1/2+\alpha}{2} \right) \Gamma \left( \frac{1/2+\beta}{2} \right)  \right| (\log Q)^{2}\exp \left( -\frac{x}{2} \right) \\
&\ll \left|   \Gamma \left( \frac{1/2+\alpha}{2} \right) \Gamma \left( \frac{1/2+\beta}{2} \right)  \right| \exp \left( -\frac{x}{3} \right)
\end{aligned}
\end{equation}
for $x>(\log Q)^{A}$. On the other hand,by (\ref{A5}), we have
\begin{equation}
\label{A8}
V_{\alpha ,\beta}(x)= \Gamma \left( \frac{1/2+\alpha}{2} \right) \Gamma \left( \frac{1/2+\beta}{2} \right) \left(1+O_{\varepsilon}(Q^{\varepsilon}x^{1/2-\varepsilon}) \right)
\end{equation}
for $0<x \leq 1$. By using (\ref{A7}) and (\ref{A8}) instead of (\ref{CISlarge}) and (\ref{CISsmall}) and  making much the same argument as in \cite{CIS3}, we see that the asymptotic formula  (\ref{31}) with (\ref{32.5}) is valid in the domain $|\Re (\alpha)|, |\Re (\beta )|\leq 1/\log Q$, $(\log Q)^{2}\leq |\Im (\alpha)|, |\Im (\beta)|<(\log Q)^{A}$, $\Im (\alpha)\sim -\Im (\beta)$.

\section{Acknowledgements}
This work is partially supported by the JSPS, KAKENHI Grant Number 21K03204 and 24K06697.

%%%%%%%%%%%%%%%%%%%%%%%%%%%%%%%%%%%%%%%%%%%%%%%%%%%%%%%%%%%%%%%%%%%%%%%%%%%%%%%%%%%%%%%%%%%%%%%%%%%%%%%%%%%%%%%%%%%%%%%%%%%%%%%%%%%%%%%%%%%%%%%%%%%%%%%%
%%%%%%%%%%%%%%%%%%%%%%%%%%%%%%%%%%%%%%%%%%%%%%%%%%%%%%%%%%%%%%%%%%%%%%%%%%%%%%%%%%%%%%%%%%%%%%%%%%%%%%%%%%%%%%%%%%%%%%%%%%%%%%%%%%%%%%%%%%%%%%%%%%%%%%%%

\noindent
Kanto Gakuin University, \\
Kanazawa, Yokohama\\
Kanagawa, Japan\\
E-mail address: sono@kanto-gakuin.ac.jp

\end{document}